\newcommand{\pa}{\partial}
\newcommand{\na}{\nabla}
\newcommand{\di}{\nabla\cdot}
\newcommand{\pnu}{\pa_\nu}
\newcommand{\intO}{\int_{\Omega}}
\newcommand{\dH}{d\mathcal{H}_x^{d-1}}
\renewcommand{\epsilon}{\varepsilon}
\newtheorem{theorem}{Theorem}[section]
\newtheorem{definition}{Definition}[section]
\newtheorem{lemma}{Lemma}[section]
\newtheorem{proposition}{Proposition}[section]
\newtheorem{remark}{Remark}[section]
\title[Chemotaxis-Navier-Stokes systems with Robin boundary conditions]{Global solutions for chemotaxis-Navier-Stokes system with Robin boundary conditions}
\author[M. Braukhoff]{Marcel Braukhoff}
\address{Institute for Analysis and Scientific Computing, Vienna University of Technology, Wiedner Hauptstrasse 8--10, 1040 Vienna, Austria}
\email{marcel.braukhoff@asc.tuwien.ac.at}
\author[B. Q. Tang]{Bao Quoc Tang}
\address{Institute of Mathematics and Scientific Computing, University of Graz, 
	Heinrichstrasse 36, 8010 Graz, Austria}
\email{quoc.tang@uni-graz.at}
\begin{document}
\keywords{Chemotaxis-Navier-Stokes systems; Robin boundary conditions; Global existence; Weak (strong) solutions; Boundary energy.}
\subjclass[2000]{35Q92; 92C17; 35J57; 35A02}
\maketitle
\begin{abstract}
	We consider a chemotaxis-Navier-Stokes system modelling cellular swimming in fluid drops where an exchange of oxygen between the drop and its environment is taken into account. This phenomenon results in an inhomogeneous Robin-type boundary condition. Moreover, the system is studied without the logistic growth of the bacteria population. We prove that in two dimensions, the system has a unique global classical solution, while the existence of a global weak solution is shown in three dimensions. In the latter case, we show that the energy is bounded uniformly in time. A key idea is to utilise a boundary energy to derive suitable {\it a priori} estimates. Moreover, we are able to remove the convexity assumption on the domain. 
\end{abstract}

\tableofcontents

\section{Introduction and Main results}
In recent years the analysis of pattern formation in biology  has become a thriving field, especially in the analysis of chemotaxis describing bacteria cells and their interaction with a chemical. In this paper, we study the following chemotaxis-Navier-Stokes system with signal consumption
\begin{equation}\label{C-NS}\left\{
\begin{aligned}
\partial_t n + u\cdot \na n - \Delta n &= \na\cdot (n\na c), &x\in\Omega, \; t>0,\\
\partial_t c+ u\cdot \na c - \Delta c &= -nc, &x\in\Omega, \; t>0,\\
\partial_t u - \mu \Delta u + \na\cdot(u\otimes u) &=\nabla P  -n\na \varphi,  &x\in\Omega, \; t>0,\\
\na\cdot u &= 0,  &x\in\Omega, \; t>0,
\end{aligned}\right.
\end{equation}
subject to boundary and initial data conditions
\begin{equation}\label{boundary}
\begin{aligned}
\na c \cdot \nu = \kappa(x) (\gamma(x) - c), \qquad \na n \cdot \nu = n\na c\cdot \nu, \qquad u = 0, \qquad x\in\Gamma, \; t>0\\
n(x,0) = n_0(x), \qquad c(x,0) = c_0(x), \qquad u(x,0) = u_0(x), \qquad x\in\Omega.
\end{aligned}
\end{equation}
Here $\Omega\subset \mathbb R^d$, $d=1,2,3$, is a bounded domain with smooth boundary $\Gamma:= \partial\Omega$, $\mu > 0$ is the viscosity, $\nu(x)$ is the unit outward normal vector at $x\in \Gamma$, and $\varphi$ is the gravitational potential. The explanation of the importance and the role of $\kappa ,\gamma:\Gamma\to \mathbb R_{\geq0}$ in the boundary condition \eqref{boundary} is explained in full detail in the next subsection.

\medskip
System \eqref{C-NS} (with slightly different boundary conditions) was introduced in \cite{tuval}  (see also, e.g., \cite{tao_bdoxygenconsumption}, or Sections 4.1 and 4.2 of the survey \cite{BBTW}). In this model, the population density of the bacteria is denoted by $n$, whereas $c$ stands for the chemical concentration. Assuming that the bacteria and the chemical are solved in an incompressible liquid like water, we use the Navier-Stokes equation for velocity $u$ to model its flow.  Due to the gravitational potential $\varphi$, the bacteria cells influence the liquid flow through their weight.    

\medskip
As an example, this model may be applied to describe the density $n$ of the species \textit{Bacillus subtilis} in a drop of water given by $\Omega$. Their otherwise random motion is known to be directed towards higher concentration $c$ of oxygen contraction, which they consume. 
In \cite{dombrowskietal,tuval}, one can experimentally observe that large coherent patterns emerge after some time, which became an interesting research topic in the mathematical community \cite{win_transAMS}. 
However, the rigorous results were devastating with respect to this matter: In order to facilitate the problem, usually the system was analyzed for homogeneous boundary conditions, i.e., $\kappa\equiv0$. On the one hand, it was shown that solutions subject to small initial data in a three dimensional domain combined with homogeneous Neumann boundary condition converged to the stationary, constant state $(\frac1{|\Omega|}\int_{\Omega}n_0,0,0)$. On the other hand, also every classical solution in two spacial dimensions converges to the same stationary state \cite{fan_zhao,jiang_wu_zheng,win_fluid_konvergenzresultat,zhang_li_decay}. Finally, the case was settled in  \cite{win_transAMS} that even ``eventual energy solution'' converge to the constant state.

\medskip
There are also different versions of the system of porous-medium type (see e.g. \cite{difrancesco_lorz_markowich}) or where the chemotaxis term is given in a more general form \cite{win_CalcVarPDE}. However, the long term behavior remains qualitatively the same - also without involving a fluid, see \cite{fan_jin,lswx,tao_win}.

\subsection{The boundary conditions}

All the previous mentioned articles have in common to use homogeneous boundary conditions. Nevertheless, in the experiments \cite{tuval}, the drop of water is  surrounded by air which leads inevitable to an oxygen exchange between the drop and the surroundings \cite{atkins}. Actually, already in the original paper \cite{tuval} introducing the model \eqref{C-NS}, the authors already use inhomogeneous Dirichlet conditions. 

\medskip
Therefore, let us have a closer look how to model the oxygen exchange and why this is crucial for the experiment. We follow the derivation of \cite{Bra17,BrLa19}. 
Assume that water is an ideal solvent for oxygen. The oxygen exchange at the boundary can be modeled using Raoul's law: On the one hand the amount of solving oxygen at $x\in\Gamma$ is proportional to the vapor pressure of the gaseous oxygen around $x$.  On the other hand, the outgoing rate of oxygen is proportional to  the concentration on the boundary, i.e., the rate of oxygen molecules leaving the drop at $x\in\Gamma$ is proportional to the number of molecules at $x$ (see \cite[Section 5.3, page 144]{atkins}). In order to have a closed system, we suppose that the oxygen vapor pressure is a given function. This is reasonable, because  the oxygen-diffusion coefficient  in air is three orders of magnitude larger than that in the	fluid \cite[page 2279]{tuval}. Moreover, the negligibility of the influence of the drop to the gaseous oxygen implies that the vapor pressure is constant in time. Adding both effects, we see that the oxygen-flux at the boundary is an affine  function of the concentration, which we write in the form
\begin{equation}\label{boundary2_0}
\begin{aligned}
\na c(x,t) \cdot \nu = \kappa(x) (\gamma(x) - c(x,t)), \qquad
x\in\Gamma, \; t>0
\end{aligned}
\end{equation}
for $\gamma,\kappa:\Gamma\to\mathbb{R}_{\geq0}$. This condition is also known as Henry's law in the context of sorption of chemicals to surfaces \cite{atkins}. Note that we do not want to assume that the drop is entirely surrounded by air, but also part of it can by connected to a solid exterior where there is no oxygen exchange. 
Therefore for on the solid--water interface we assume that $\kappa$ vanishes, which does not have to be the case on the water--air boundary.  For function $\gamma(x)$ (as in \cite{BrLa19}), one can interpret it as the maximal saturation of oxygen in the fluid. 
%
Note that for $\Omega$ being the ball and $\gamma$ and $\kappa$ being radially symmetric, one obtains Dirichlet boundary conditions (see e.g. \cite{tuval})
\[c(x)=\gamma(x)\qquad \text{for }x\in \partial \Omega\]
as a limit of \eqref{boundary2_0} for $\kappa\to\infty$, see \cite[Proposition 5.3]{BrLa19} for a proof of the stationary problem.

\medskip
Having changed the boundary condition for the oxygen concentration $c$, we need to adjust the boundary conditions for $n$ as well in order to preserve the mass of bacteria. For this we choose the no-flux conditions for $n$. In addition, we close the Navier-Stokes system with Dirichlet boundary conditions. Therefore the set of boundary conditions are given by
\begin{equation}\label{boundary2}
\begin{aligned}
\na c \cdot \nu = \kappa(x) (\gamma(x) - c), \qquad \na n \cdot \nu = n\na c\cdot \nu, \qquad u = 0, \qquad x\in\Gamma, \; t>0.
\end{aligned}
\end{equation}
In \cite{BrLa19}, the system \eqref{C-NS} combined with this boundary conditions is treated without the flow, i.e., $u=\nabla P = \nabla \varphi \equiv0$. Therein it is shown that if $\kappa\not\equiv0$ and $\gamma=const$ then \eqref{C-NS} and \eqref{boundary2} admit a unique stationary state for a given mass $\int_{\Omega} n dx$. Moreover $n$ and $c$ are positive but not constant. In the radial symmetric case, $n$ and $c$ are  even strictly convex. Up to the best of our knowledge, this is the only qualitative result for the system \eqref{C-NS} showing a non-trivial steady state.

\medskip
Let us mention related works on chemotaxis systems involving inhomogeneous boundary conditions. The articles \cite{chertock_etal_numeric,lee_kim_numerical_bioconvection,tuval} show numerically that models with inhomogenous boundary conditions match the experimental results. 
In \cite{Lorz}, a chemotaxis-fluid system with an inhomogeneous Dirichlet condition for $c$ on parts of the boundary is treated on a bounded two dimensional domain, and the local existence of weak solutions is shown therein. Recently, \cite{preprint_zhaoyin} imposes inhomogeneous Dirichlet condition on one side of the domain $\mathbb R^2\times(0,1)$. Under stronger technical assumptions on the consumption term,  \cite{preprint_zhaoyin} proves the existence and convergence of solutions for initial data being close to $(0,\gamma,0)$. Moreover, in spatial dimension one \cite{knosalla_global,knosalla_nadzieja_stationary} treat the related chemotaxis system 
\[
\begin{cases}
n_t=n_{xx} - (n E(c)_x)_x,\\
c_t=c_{xx} - n E(c)
\end{cases}
\]
for either a inhomogeneous Dirichlet or Neumann conditions. Here, $E$ satisfies $E(c)\to 0$ for $c\to 0$ and $c\to \infty$. The existence of global, bounded solutions is proved in \cite{knosalla_global}, whereas \cite{knosalla_nadzieja_stationary} proves the existence and uniqueness of the stationary state.

\subsection{Global existence vs.\ logistical source}
The first analytical results for system \eqref{C-NS}--\eqref{boundary} with logistic growth of the density, i.e. the equation for $n$ is replaced by 
\begin{equation}\label{logistic}
\partial_t n + u\cdot \na n - \Delta n = \na\cdot(n\na c) + n(1-n)
\end{equation}
were delivered in a paper of the first author \cite{Bra17}, in which the global existence of classical and weak solutions was shown in two and three dimensions, respectively.
\medskip

The global existence of solutions to \eqref{C-NS} with homogeneous boundary conditions crucially depends on the energy functional
\begin{equation}\label{normal-energy}
	S(t) = \int_{\Omega}n(t)\log n(t)dx + a\int_{\Omega}\left|\na\sqrt{c(t)} \right|^2dx + b\int_{\Omega}|u(t)|^2dx
\end{equation}
which is decreasing for suitable  constants $a, b>0$, see e.g. \cite{Win12,Win16}. This gives the necessary a-priori estimates to start the bootstrapping, which eventually leads to global (strong, weak) solutions. In the case of Robin-type boundary conditions (for the oxygen $c$), this strategy is not directly applicable since the functional $S(t)$ fails to decrease in time because of the boundary terms in the estimate. This problem was solved in \cite{Bra17}, firstly by transforming \eqref{C-NS} into homogeneous Neumann boundary conditions, and secondly, to cope with the extra terms coming from the transformation, by introducing the logistical growth term as in \eqref{logistic}. The logistic term gives a bound in $L^2(0,T;L^2(\Omega))$ for free just by integrating \eqref{logistic} on $\Omega\times (0,T)$. This estimate can then be used in an essential way in a bootstrap argument to get global solutions.


\medskip
The logistic nonlinearity acts as a damping term 
and therefore it usually helps in the analysis of chemotaxis systems. For instance, under homogeneous boundary conditions, system \eqref{C-NS} with a logistic growth is very well studied in \cite{lankeit_m3as} in which global weak solutions were shown to be smooth after some positive time. Moreover, convergence of solutions to the steady state $(1,0,0)$ was also proved. Similar results were obtained in \cite{lankeit_wang} for the case without fluids and in \cite{win_nutrienttaxis}  in the case of food-supported proliferation. A recent study \cite{Miz19} demonstrates well the effect of logistic growth (together with nonlinear diffusion) to the well-posedness of chemotaxis systems. We however remark that a logistic growth term might lead to interesting new effects in chemotaxis \cite{lankeit_thresholds,hillenpainter_spatiotemporalchaos,win_transient}. For example, one can easily see that the mass of the bacteria is no longer conserved if a logistic source term is added to the first equation. 

\medskip
The global well-posedness of the chemotaxis-Navier-Stokes system without logistic growth \eqref{C-NS} together with the inhomogeneous boundary conditions \eqref{boundary} is therefore a challenging problem, and it is the main aim of the present paper.
\subsection{Key ideas}
As mentioned in the previous subsection, due to inhomogeneous boundary conditions \eqref{boundary}, the usual energy \eqref{normal-energy} is not decreasing in time along a trajectory of \eqref{C-NS}. Moreover, the lack of the logistic growth also seems to break the strategy of transforming \eqref{C-NS}--\eqref{boundary} into a system with homogeneous boundary conditions. Our key idea to deal with this issue is first to introduce {\it a boundary energy} of the form
\begin{equation*}
	S^{\text{boundary}}(t):= \int_{\Gamma} \kappa(x)\left[\gamma(x)\log\frac{\gamma(x)}{c(x,t)} - \gamma(x) + c(x,t)\right]\dH,
\end{equation*}
and then to look at the evolution of the {\it total energy}
\begin{equation*}
\mathcal F(t) = S(t) + S^{\text{boundary}}(t)
\end{equation*}
with $a = 2$ and $b = K$, for a sufficiently large constant $K$. We will show that this total energy satisfies
\begin{equation}\label{en1}
	\frac{d}{dt}\mathcal F(t) \leq p\mathcal F(t) + q
\end{equation}
for some constant $p, q>0$, which consequently leads to a set of a-priori estimates. These estimates are enough in two dimension to start a bootstrap argument to obtain global classical solutions, while they ensure an approximating procedure in three dimensions to get global weak solutions. 

\medskip
As one can see from \eqref{en1} that though the solution is global, the energy might grow exponentially. To show that the total energy $\mathcal{F}(t)$ is in fact bounded uniformly in time, we introduce yet another energy functional
\begin{equation*}
	S^{\text{add}}(t):= \int_{\Omega}\left[c(x,t)\log \frac{c(x,t)}{\widehat{\gamma}(x)} - c(x,t) + \widehat{\gamma}(x)\right]dx
\end{equation*}
where $\widehat{\gamma}$ is a smooth extension of $\gamma$ to $\overline{\Omega}$. Now by considering $\mathcal{F}^{\text{new}}(t) = \mathcal{F}(t) + LS^{\text{add}}(t)$ for some suitable constant $L>0$, we obtain
\begin{equation}\label{en2}
	\frac{d}{dt}\mathcal{F}^{\text{new}}(t) + \lambda \mathcal{F}^{\text{new}}(t) \leq C
\end{equation} 
for some $\lambda, C>0$. This inequality gives the uniform-in-time bound for $\mathcal{F}^{\text{add}}$ and eventually the desired bound for the total energy $\mathcal{F}$.

\medskip
We also would like to emphasize that we do not assume the domain $\Omega$ to be convex. The convexity of $\Omega$ was very useful in the literature when dealing with the analysis of \eqref{C-NS}, see e.g. \cite{Win12,Win16}. Though it is natural to assume that a fluid drop has a convex shape, there exist situations when it is not the case, for instance, when the drop is in contact with an uneven surface. In \cite{lankeit_m3as,MS14}, the authors were also able to remove this technical condition on the convexity of $\Omega$ by using the boundedness of the domain curvature (see \cite[Lemma 4.2]{MS14}). Our main idea is to go one step further and use the full power of the dissipation terms arising from the diffusion of the oxygen (see the proof of Lemma \ref{lem:energy2}).

\subsection{Main Results}
We begin with definitions of classical and weak solutions.
\begin{definition}[Classical solutions]
	A quadruplet $(n,c,u,P)$ is called a classical solution to \eqref{C-NS}--\eqref{boundary} on $(0,T)$ if
	\begin{align*}
	n,c&\in C^{2+2\delta,1+\delta}\left(\overline\Omega\times(0,T)\right)\cap C^{0}\left(\overline\Omega\times[0,T)\right),
	\\
	u&\in C^{2+2\delta,1+\delta}\left(\overline\Omega\times(0,T)\right)\cap C^{0}\left(\overline\Omega\times[0,T)\right),
	\\
	P&\in C^{1+\delta,\delta}\left(\Omega\times(0,T)\right),
	\end{align*}
	for some $\delta>0$, and the equations in \eqref{C-NS}--\eqref{boundary} are satisfied pointwise.	
\end{definition}

\begin{definition}[Weak solutions]\label{weak_sol}
	A triple $(n,c,u)$ is called a global weak solution of \eqref{C-NS}--\eqref{boundary} if
	\begin{equation*}
		n\in L^1_{loc}([0,\infty);W^{1,1}(\Omega)), \quad c\in L^1_{loc}([0,\infty); W^{1,1}(\Omega)), \quad u \in L^1_{loc}([0,\infty); W^{1,1}_0(\Omega;\mathbb R^3))
	\end{equation*}
	such that $n\geq 0$ and $c\geq 0$ a.e. in $\Omega\times (0,\infty)$,
	\begin{equation*}
		nc \in L^1_{loc}(\Omega\times [0,\infty)), \quad u\otimes u \in L^1_{loc}(\Omega\times[0,\infty); \mathbb R^{3\times 3}), \qquad \text{ and }
	\end{equation*}
	\begin{equation*}
		n\na c, \quad nu, \quad cu \quad \text{ belong to } \quad L^1_{loc}(\Omega\times [0,\infty); \mathbb R^3),
	\end{equation*}
	that $\na\cdot u = 0$ a.e. in $\Omega\times (0,\infty)$, and that
	\begin{align*}
		-\int_0^\infty\int_{\Omega} n\partial_t \psi dxdt = \int_{\Omega}n_0\psi(\cdot,0)dx\\ -\int_0^\infty\int_{\Omega}\na n\cdot \na \psi dxdt + \int_0^\infty\int_{\Omega}n\na c \cdot \na \psi dxdt + \int_0^\infty\int_{\Omega}nu\cdot \na\psi dxdt,
	\end{align*}
	\begin{align*}
		-\int_0^\infty\int_{\Omega}c\partial_t\psi dxdt = \int_{\Omega}c_0\psi(\cdot,0)dx
		-\int_0^\infty\int_{\Omega}\na c\cdot \na\psi dxdt\\ + \int_0^\infty\int_{\Gamma}\kappa (\gamma - c)\psi \dH dt - \int_0^\infty\int_{\Omega}nc\psi dxdt + \int_0^\infty\int_{\Omega} cu\cdot \na\psi dxdt
	\end{align*}
	for all $\psi \in C_0^\infty(\Omega\times [0,\infty))$, and
	\begin{align*}
		-\int_0^\infty\int_{\Omega} u\cdot \xi_t dxdt = \int_{\Omega}u_0\cdot \xi(\cdot,0)dx\\
		-\int_0^\infty\int_{\Omega} \na u\cdot \na \xi dxdt + \int_0^\infty\int_{\Omega} u\otimes u\cdot \na \xi dxdt - \int_0^\infty\int_{\Omega}n\na \varphi \cdot \xi dxdt
	\end{align*}
	for all $\xi \in C_0^\infty(\Omega\times[0,\infty); \mathbb R^3)$ satisfying $\na\cdot \xi \equiv 0$.
\end{definition}

As usual, we denote by
\begin{equation*}
	L^2_\sigma(\Omega):=\overline{D_\sigma(\Omega)}^{\|\cdot\|_{L^2(\Omega)}}, \quad \text{ where } \quad D_\sigma(\Omega):= \{u\in C_0^\infty(\Omega)^d:\nabla\cdot u =0\}, 
\end{equation*}

and let $\mathcal P^\infty$ be the Helmholz projection $L^2(\Omega)^d \to L^2_\sigma(\Omega)^d$. We denote by
\begin{equation*}
	A: D(A) \subset L_\sigma^2(\Omega) \to L_\sigma^2(\Omega), \quad Au:= -\mathcal P^\infty \Delta u
\end{equation*}
the Stokes operator with Dirichlet boundary conditions, where the domain of $A$ is given by
\begin{equation*}
D(A) = L^2_\sigma(\Omega)^d\cap H_0^1(\Omega)^d \cap H^2(\Omega)^d.
\end{equation*} The main results of this paper are the following two theorems.
\begin{theorem}[Global classical solutions in dimension two]\label{thm:main2D}
	Let $d \leq 2$ and assume that 
	the data satisfies
	\begin{equation*}
		0< \kappa, \gamma \in C^1(\Gamma), \quad \varphi \in C^1(\overline{\Omega}).
	\end{equation*}	
	Then for any initial data $(n_0, c_0, u_0)$ satisfying
	\begin{equation*}
	\left\{\begin{aligned}
	0< n_0&\in C^0(\overline{\Omega})\cap H^1(\Omega),\\
	0< c_0&\in W^{1,10}(\Omega),\\
	u_0&\in D(A^{\alpha}) \qquad \text{for some } \frac d4<\alpha<1.\end{aligned}\right\},
	\end{equation*}
	there exists a unique global classical solution to \eqref{C-NS}--\eqref{boundary}.
\end{theorem}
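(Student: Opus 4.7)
The plan is to construct a local-in-time classical solution on a maximal interval $[0, T_{\max})$, derive uniform a priori estimates on every compact subinterval using the total energy $\mathcal{F}(t) = S(t) + S^{\text{boundary}}(t)$ introduced in the key-ideas subsection, and then run a bootstrap argument specific to $d=2$ to upgrade these estimates to pointwise bounds that preclude finite-time blow-up. For the local existence I would apply a standard contraction argument to the mild formulation using the Neumann heat semigroup for $n$, the Robin heat semigroup for $c$, and the Stokes semigroup $e^{-tA}$ for $u$; the regularity hypotheses on $(n_0, c_0, u_0)$ and on $\kappa, \gamma, \varphi$ are precisely tailored to close this fixed point and to yield an extensibility criterion of the form: either $T_{\max} = \infty$ or $\|n(\cdot,t)\|_{L^\infty} + \|c(\cdot,t)\|_{W^{1,q}} + \|A^\alpha u(\cdot,t)\|_{L^2} \to \infty$ as $t \uparrow T_{\max}$ for some $q$ large and $\alpha$ as in the hypothesis. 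Positivity of $n, c$ follows from the maximum principle applied to each equation, where the sign of $\kappa(\gamma - c)$ is compatible with $c \ge 0$.

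The second step is to invoke the energy inequality $\frac{d}{dt}\mathcal{F}(t) \leq p\mathcal{F}(t) + q$ announced in the introduction (with $a = 2$ and $b = K$ sufficiently large), which is a consequence of the key energy lemma. Gr\"onwall then gives $\mathcal{F}(t) \leq C(T)$ on every $[0,T] \subset [0, T_{\max})$, while integrating the accompanying dissipation inequality produces
\begin{equation*}
\int_0^T\!\intO \frac{|\na n|^2}{n}\,dx\,dt + \int_0^T\!\intO |\na u|^2\,dx\,dt + \int_0^T\!\intpO \frac{\kappa(\gamma-c)^2}{c}\,\dH\,dt \le C(T),
\end{equation*}
and in particular $n \log n \in L^\infty(0,T;L^1(\Omega))$, $\sqrt{c} \in L^\infty(0,T;H^1(\Omega))$, and $u \in L^\infty(0,T; L^2_\sigma(\Omega)) \cap L^2(0,T; H^1_0(\Omega))$. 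A scalar comparison argument in the $c$-equation further yields $\|c\|_{L^\infty(\Omega\times(0,T))} \le \max(\|c_0\|_{L^\infty}, \|\gamma\|_{L^\infty})$.

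The third step is the two-dimensional bootstrap, now standard in the chemotaxis-Navier-Stokes literature. In $d=2$ a logarithmic Gagliardo-Nirenberg-Sobolev inequality combined with $n \log n \in L^\infty_t L^1_x$ and $\na\sqrt{n} \in L^2_{t,x}$ upgrades $n$ to $L^p(\Omega \times (0,T))$ for every finite $p$; maximal parabolic regularity applied to the $c$-equation with forcing $-nc - u \cdot \na c$ and inhomogeneous Robin data then yields $c \in L^\infty(0,T; W^{1,p}(\Omega))$ for $p$ large; rewriting the $n$-equation as $\pa_t n - \Delta n = -\na c \cdot \na n - n \Delta c - u \cdot \na n$ and running Moser iteration (or an Alikakos-type scheme using the no-flux boundary condition) produces $n \in L^\infty(\Omega\times(0,T))$; finally a fractional-power estimate for $A^\alpha u$ together with Schauder theory lifts $(n,c,u)$ to the claimed $C^{2+2\delta,1+\delta}$ regularity, contradicting the extensibility criterion and hence forcing $T_{\max} = \infty$. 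Uniqueness is a direct energy estimate on the difference of two solutions in $L^2$. The genuinely hard step in the whole argument is not this bootstrap but rather the a priori energy inequality itself: producing the negative boundary contribution $\intpO \kappa(\gamma - c)^2/c\,\dH$ from the joint time derivatives of $S$ and $S^{\text{boundary}}$ and, at the same time, absorbing through the oxygen dissipation the boundary-curvature term coming from $\intpO \pa_\nu|\na c|^2\,\dH$ when $\Omega$ is not convex, which is precisely what the key energy lemma achieves.
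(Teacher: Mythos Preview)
Your outline is close to the paper's strategy (local existence plus blow-up criterion, then the total-energy inequality, then a 2D bootstrap), but the bootstrap step as you wrote it has a genuine gap.

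The dissipation you record from the energy inequality is incomplete: you list only $\intO |\na n|^2/n$, $\intO |\na u|^2$, and the boundary term, but the key energy lemma also produces the oxygen dissipation
\[
\int_0^T\!\intO c\,|\na^2\log c|^2\,dx\,dt \;+\; \int_0^T\!\intO \frac{|\na c|^4}{c^3}\,dx\,dt \;\le\; C_T,
\]
and since $c$ is bounded this gives $\na c\in L^4(Q_T)$. This is the piece that actually drives the 2D bootstrap, and without it your chain breaks at the very first link: from $n\log n\in L^\infty_t L^1_x$ and $\na\sqrt{n}\in L^2_{t,x}$ alone one obtains at best $n\in L^2(Q_T)$ in two dimensions (via Gagliardo--Nirenberg applied to $\sqrt{n}$), not $n\in L^p(Q_T)$ for all $p$. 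No ``logarithmic GNS'' inequality bridges that gap without coupling back to the $c$-equation.

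The paper's route is to use $\na c\in L^4(Q_T)$ inside the differential inequality
\[
\frac{1}{p}\frac{d}{dt}\intO n^p\,dx \;+\; \frac{p-1}{2}\intO n^{p-2}|\na n|^2\,dx \;\le\; C_p\Bigl(1+\intO|\na c|^4\,dx\Bigr)\intO n^p\,dx,
\]
which is a 2D-specific estimate obtained by testing the $n$-equation with $n^{p-1}$ and handling the chemotaxis term via Gagliardo--Nirenberg. Gr\"onwall then gives $n\in L^\infty(0,T;L^p(\Omega))$ for every $p$. Only after that does one bound $\|A^\alpha u\|_{L^2}$ (using $n\na\varphi\in L^\infty_t L^2_x$) and invoke parabolic regularity for $c$ and $n$ in the order you describe. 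So your later steps are fine in spirit, but the entry point into the bootstrap has to be $\na c\in L^4(Q_T)$ feeding the $L^p$ estimate for $n$, not an intrinsic upgrade of $n$ from its own entropy bounds.
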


\begin{theorem}[Global weak solutions in three dimensions]\label{thm:main3D}
	Let $d=3$, and assume that 
	\begin{equation}\label{varphi}
		\varphi \in W^{1,\rho}(\Omega), \quad \text{ for some } \quad \rho > 6, 
	\end{equation}
	and
	\begin{equation}\label{cond_data}
		\sqrt{\kappa}\in H^1(\Gamma)\cap L^\infty(\Gamma), \quad 0 < \underline{\gamma} \leq \gamma, \quad \text{ and } \quad \sqrt{\gamma} \in H^1(\Omega)\cap L^\infty(\Gamma).
	\end{equation}
	Then for any initial data $(n_0, c_0, u_0)$ satisfying
	\begin{equation*}
	\begin{aligned}
		& n_0 >0 \quad \text{ and } \quad \int_{\Omega}n_0\log n_0 dx < +\infty,\\
		& 0 < c_0 \in L^\infty(\Omega) \quad \text{ and } \quad \sqrt{c_0} \in H^1(\Omega),\\
		& u_0 \in L^2_{\sigma}(\Omega),
	\end{aligned}
	\end{equation*}
	the system \eqref{C-NS}--\eqref{boundary} has a global weak solution. Moreover, the global energy is bounded uniformly in time, i.e.
	\begin{equation*}
			\sup_{t\in [0,\infty)}\left(\int_{\Omega}n(t)\log n(t)dx + \| \na\sqrt c(t)\|_{L^2(\Omega)}^2 + \|u(t)\|_{L^2(\Omega)}^2\right) \leq C
	\end{equation*}
	where $C$ depends only on initial energy, on the data $\mu, \kappa, \gamma,  \varphi$, and on the domain $\Omega$.
\end{theorem}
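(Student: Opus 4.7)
The plan is to construct global weak solutions through a regularised approximation scheme and then pass to the limit using a-priori bounds coming from the total energy $\mathcal F^{\text{new}}$ described in the introduction. First I would introduce an approximate system, for instance by replacing the chemotactic drift $n\na c$ by a truncated version $n\na c_\epsilon$ where $c_\epsilon$ is a suitable mollification (or by using a Yosida-type truncation of the factor $n$), smoothing the initial data so that $(n_0^\epsilon, c_0^\epsilon, u_0^\epsilon)$ are as regular as needed, and replacing the Navier--Stokes part by a Galerkin approximation based on the Stokes eigenfunctions. For each $\epsilon>0$ one obtains a global classical (or at least smooth enough) solution $(n_\epsilon, c_\epsilon, u_\epsilon)$ by a straightforward adaptation of the two-dimensional argument combined with the boundedness inherent in the regularisation; the point of the construction is to preserve (i) nonnegativity of $n_\epsilon$ and $c_\epsilon$, (ii) mass conservation $\intO n_\epsilon(t)=\intO n_0$, (iii) an $L^\infty$ bound on $c_\epsilon$ via the maximum principle exploiting $c\mapsto \kappa(\gamma-c)$ and the fact that $\sqrt{\gamma}\in L^\infty(\Gamma)$, and (iv) the energy identities that yield $\mathcal F^{\text{new}}$.

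Next I would derive the uniform-in-$\epsilon$ estimates. The main step is to reproduce at the approximate level the differential inequality
\begin{equation*}
\frac{d}{dt}\mathcal F^{\text{new}}(t) + \lambda\,\mathcal F^{\text{new}}(t) \leq C,
\end{equation*}
which, by Gr\"onwall, gives a bound for $\mathcal F^{\text{new}}(t)$ uniform in $t$ and $\epsilon$. In particular one gets $n_\epsilon\log n_\epsilon\in L^\infty(0,\infty; L^1(\Omega))$, $\sqrt{c_\epsilon}\in L^\infty(0,\infty; H^1(\Omega))$, $u_\epsilon\in L^\infty(0,\infty;L^2_\sigma(\Omega))$, together with dissipation bounds $|\na\sqrt{n_\epsilon}|^2$ and $|D^2\sqrt{c_\epsilon}|^2$ in $L^1_{loc}(0,\infty;L^1(\Omega))$ and $\na u_\epsilon\in L^2_{loc}(0,\infty;L^2)$; as in standard chemotaxis theory, these upgrade (via Gagliardo--Nirenberg and the $H^1$-bound on $\sqrt{c_\epsilon}$) to, e.g., $c_\epsilon\in L^{10/3}_{loc}$, $n_\epsilon\in L^{5/3}_{loc}$ and $u_\epsilon\in L^{10/3}_{loc}$, enough to control every nonlinear term $n_\epsilon\na c_\epsilon$, $n_\epsilon c_\epsilon$, $n_\epsilon u_\epsilon$, $c_\epsilon u_\epsilon$, $u_\epsilon\otimes u_\epsilon$ in $L^1_{loc}$.

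Once the uniform bounds are in hand, I would pass to the limit $\epsilon\to 0$ by standard compactness. Equi-integrability for $\{n_\epsilon\log n_\epsilon\}$ gives $n_\epsilon\to n$ in $L^1_{loc}$, the $H^1$-bound on $\sqrt{c_\epsilon}$ combined with the equation for $c_\epsilon$ yields a uniform $W^{1,q}$-in-time estimate for $c_\epsilon$ (some $q>1$), and Aubin--Lions provides strong $L^2_{loc}$ convergence of $c_\epsilon$ and $u_\epsilon$; strong convergence of $u_\epsilon$ is obtained from the Navier--Stokes estimates $\na u_\epsilon\in L^2$ and $\pa_t u_\epsilon\in L^{4/3}_{loc}(V^*)$ as in the Leray--Hopf construction. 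These convergences suffice to identify the limit in the nonlinear terms and to pass to the trace, using the $H^1(\Omega)$ regularity of $c_\epsilon$ and the continuity of the trace operator to conclude that $\int_0^\infty \int_\Gamma \kappa(\gamma-c_\epsilon)\psi\,\dH dt \to \int_0^\infty \int_\Gamma \kappa(\gamma-c)\psi\,\dH dt$, which is what makes the Robin condition enter the weak formulation in Definition \ref{weak_sol}. The uniform-in-time energy bound of the theorem is inherited from the $\epsilon$-uniform bound on $\mathcal F^{\text{new}}_\epsilon$ by lower semicontinuity of each term.

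The hard part, as the authors indicate, is not the passage to the limit but establishing the energy inequality $\frac{d}{dt}\mathcal F^{\text{new}} + \lambda \mathcal F^{\text{new}} \leq C$ without convexity of $\Omega$. Differentiating $S$ produces, in addition to the familiar ``good'' dissipation $\intO n|\na\log n - \na c|^2$ and $\intO |D^2\log c|^2 c$, two problematic contributions: (a) boundary terms $\int_\Gamma \pnu|\na\sqrt c|^2 \dH$ that, for a non-convex $\Omega$, have no definite sign and are usually killed by the convexity assumption via $\pnu|\na f|^2\leq 0$ on $\Gamma$ (see \cite[Lemma 4.2]{MS14}); (b) the boundary flux $\int_\Gamma \kappa(\gamma-c)(\cdots)\dH$ coming from the inhomogeneous Robin condition. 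My plan to handle (a) is to trade the boundary term against a portion of the interior $H^2$-dissipation of $\sqrt{c}$ through a trace inequality $\|f\|_{L^2(\Gamma)}^2 \leq \eta \|f\|_{H^1(\Omega)}^2 + C_\eta\|f\|_{L^2(\Omega)}^2$ (applied with $f=|\na\sqrt c|$ and using boundedness of the curvature of $\Gamma$), so that the bad boundary term is absorbed into the dissipation produced by $\Delta c$; this is the role played by Lemma \ref{lem:energy2} of the paper. The Robin term in (b) is exactly cancelled, up to a sign, by the time derivative of $S^{\text{boundary}}$, which is the very reason for introducing the boundary energy. The final term $LS^{\text{add}}$ provides, after differentiation, a negative contribution of the form $-L\intO |\na\sqrt c|^2$, which, combined with $L^\infty$ bounds on $c$ and Poincar\'e-type estimates on $\Omega$ (using $\widehat\gamma>0$), furnishes the coercivity $+\lambda \mathcal F^{\text{new}}$ needed to close Gr\"onwall in dissipative form.
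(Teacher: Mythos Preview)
Your overall plan matches the paper's strategy closely: an approximate system, the boundary-augmented energy $\mathcal F$, the additional bulk relative entropy $S^{\text{add}}$ for the uniform-in-time bound, trace estimates to handle non-convexity, and Aubin--Lions for the passage to the limit. Two points, however, deserve correction.

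First, the regularisation. The paper does \emph{not} truncate or mollify the chemotactic drift; it adds a cubic logistic damping $\epsilon n(1-n^2)$ to the $n$-equation and uses a Galerkin projection $\mathcal P^m$ in the fluid equation (system \eqref{Smod}). The logistic term is what guarantees global existence of the approximate classical solutions in three dimensions (via \cite[Proposition 4.6]{Bra17}); your appeal to ``a straightforward adaptation of the two-dimensional argument'' is not justified, since the key step there (Lemma \ref{e-lem-estimate-lp-grad-n-in-2D}) is explicitly two-dimensional. A consequence is that mass is \emph{not} conserved at the approximate level --- only bounded (Lemma \ref{L1Linf}) --- and one must check separately that $\epsilon(n^{\epsilon,m}-(n^{\epsilon,m})^3)\rightharpoonup 0$ in $L^1$ (Lemma \ref{conv_logistic}).

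Second, the mechanism behind the boundary energy. The time derivative of $S^{\text{boundary}}=\int_\Gamma \kappa s^\infty(\gamma|c)\,\dH$ is introduced not to cancel the Robin flux $\int_\Gamma \kappa(\gamma-c)(\cdots)\dH$ as you suggest, but specifically to produce the term $-2\int_\Gamma \partial_\nu|\partial_\nu\sqrt c|^2\dH$ (Lemma \ref{boundary_energy}), which kills the \emph{second normal derivative} contribution $\int_\Gamma \partial_\nu|\partial_\nu\sqrt c|^2\dH$ arising when one expands $\int_\Omega\Delta|\nabla\sqrt c|^2$ (Lemma \ref{lem.1.bd.integral}). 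The remaining boundary pieces --- the curvature terms $\int_\Gamma \nabla\sqrt c\cdot\nabla^T\nu\,\nabla\sqrt c\,\dH$ and $\int_\Gamma(\nabla\cdot\nu)|\partial_\nu\sqrt c|^2\dH$, together with various Robin-type remainders --- are then absorbed into the interior $H^2$-dissipation of $\sqrt c$ via the trace inequality \eqref{trace}, as you correctly anticipate (Lemma \ref{lem:energy2}). Your description of $S^{\text{add}}$ and its role in closing the dissipative Gr\"onwall inequality is accurate.
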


\begin{remark}[Extensions]
	We believe that our approach is extendable to a more general system than \eqref{C-NS}--\eqref{boundary}, for instance
	\begin{equation}\label{C-NS-extension}
	\left\{
	\begin{aligned}
	\partial_t n + u\cdot \na n - \Delta n &= \na\cdot (n\chi(c)\na c), &x\in\Omega, \; t>0,\\
	\partial_t c+ u\cdot \na c - \Delta c &= -nf(c), &x\in\Omega, \; t>0,\\
	\partial_t u - \mu \Delta u + \na\cdot(u\otimes u) &=\nabla P  -n\na \varphi,  &x\in\Omega, \; t>0,\\
	\na\cdot u &= 0, &x\in\Omega, \; t>0,
	\end{aligned}
	\right.
	\end{equation}
	for some functions $\chi$ and $f$ satisfying suitable conditions (see e.g. \cite{Win16} for the case with homogeneous boundary conditions), though non-trivial modifications need to be carried out. We leave this interesting open issue for the interested reader.
\end{remark}

\medskip
{\bf The rest of this paper is organized as follows:} In the next section, we consider approximate systems of \eqref{C-NS} and derive necessary \textit{a priori} estimates. Using these estimates, we prove the main theorems in Section \ref{proofs}.

\medskip

{\bf Notation:} In this paper, we will use the following notation:
\begin{itemize}
	\item We will denote by $C$ a generic constant {\it independent of time}, which can be different from line to line, or even in the same line. When a constant depends on the time horizon $T>0$, we will write $C_T$ instead.
	\item For any $T>0$ and $1\leq p\leq \infty$, we denote by $Q_T:= \Omega\times (0,T)$ and
	\begin{equation*}
		L^p(Q_T):= L^p(0,T;L^p(\Omega))
	\end{equation*}

	with the usual norm
	\begin{equation*}
		\|f\|_{L^p(Q_T)}:= \left(\int_0^T\int_{\Omega}|f|^pdxdt \right)^{\frac 1p}
	\end{equation*}
	when $p<\infty$ and 
	\begin{equation*}
		\|f\|_{L^\infty(Q_T)}:= \text{ess sup}_{t\in (0,T)}\|f(t)\|_{L^\infty(\Omega)}.
	\end{equation*}
\end{itemize}

\section{Approximate systems and a-priori estimates}
If the evolution equation for the density $n$ in \eqref{C-NS}--\eqref{boundary} is replaced by
\[\partial_t n + u\cdot \nabla n - \Delta n = \nabla\cdot(n\nabla c) + n(1-n), \]
then the local existence of a classical solution was done in \cite[Proposition 2.6]{Bra17} by a standard fixed point argument. It is remarked that the proof of this result does not use any structural of the logistic growth $n(1-n)$, and it is therefore also applicable to \eqref{C-NS}--\eqref{boundary}. 
For the reader's convenience we recall Proposition 2.6 from \cite{Bra17} (without the logistic term). 
\begin{proposition}\label{a-prop-local-solution-hom}
	Let $d\in\{1,2,3\}$ and $\Omega\subset\mathbb{R}^d$ be a bounded domain with smooth boundary. 
	
	Then there exists a maximal $T_{\max} \in(0,\infty]$ such that \eqref{C-NS}-\eqref{boundary} possesses a classical solution on $(0,T)$ for every $0<T<T_{\max}$ with $n \geq 0$ and $c\geq 0$.
	Furthermore, if 
	\begin{equation}\label{a-formula-explosion-condition-transformed}
	\limsup_{t\uparrow T_{\max}}\left( \|n(t)\|_{L^\infty(\Omega)}+\|\na n (t)\|_{L^2(\Omega)}+\|c(t)\|_{W^{1,4}(\Omega)}+\|A^\alpha u(t)\|_{L^2(\Omega)}\right) < +\infty
	\end{equation}
	then $T_{\max} = \infty$. The solution $(n,c,u,P)$ is unique up to a constant for $P$.
\end{proposition}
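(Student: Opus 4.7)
My plan is to adapt the standard contraction-mapping construction of \cite{Bra17}, Proposition 2.6, omitting the logistic source since it plays no role in the local-existence proof. The starting observation is that each of the three evolution equations can be written as a semilinear problem driven by a well-understood linear generator: the Robin-Laplacian $A_R$ encoding $\pnu c + \kappa c = \kappa\gamma$ for $c$, the Neumann-Laplacian $A_N$ for $n$ (after rewriting its boundary condition as $(\na n - n\na c)\cdot\nu = 0$ and absorbing the cross-diffusion into the nonlinearity), and the Stokes operator $A$ for $u$. All three generate analytic semigroups on $L^p$-scales, which supplies the analytic engine for the iteration.

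First I would split $c = c^\ast + \tilde c$, where $c^\ast$ solves the stationary Robin problem $-\Delta c^\ast = 0$ in $\Omega$ with $\pnu c^\ast + \kappa c^\ast = \kappa\gamma$ on $\Gamma$, so that $\tilde c$ satisfies a homogeneous Robin condition and the inhomogeneous boundary data are absorbed into a smooth stationary profile. I would then introduce a product Banach space of the form
\begin{equation*}
X_T := C\bigl([0,T]; C^0(\overline{\Omega})\bigr) \cap L^\infty\bigl(0,T; H^1(\Omega)\bigr)
\end{equation*}
for $n$, $C([0,T]; W^{1,4}(\Omega))$ for $\tilde c$, and $C([0,T]; D(A^\alpha))$ for $u$ with $\alpha > d/4$ (so that $D(A^\alpha) \hookrightarrow L^\infty$ by Henry's fractional-power embedding). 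The map $\Phi$ would send $(n,\tilde c,u)$ to $(\bar n, \bar{\tilde c}, \bar u)$ obtained by solving the three linearized equations with the given iterate treated as known data. Standard semigroup smoothing together with the regularity of the initial data and the boundary coefficients would then show that a closed ball of the product space is invariant under $\Phi$ and that $\Phi$ contracts for $T$ sufficiently small. Nonnegativity of $n$ and $c$ would follow from the weak maximum principle applied at each iteration: the $c$-equation has source $-nc$ vanishing on $\{c = 0\}$ and Robin data with $\gamma \geq 0$, while the $n$-equation is a conservation law with homogeneous flux boundary condition.

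For the blow-up criterion I would argue by contradiction: assume $T_{\max} < \infty$ while the four norms in \eqref{a-formula-explosion-condition-transformed} remain bounded on $[0,T_{\max})$. The bound on $\|A^\alpha u\|_{L^2}$ yields $u\in L^\infty(\Omega\times(0,T_{\max}))$; the bounds on $\|n\|_{L^\infty}$ and $\|\na c\|_{L^4}$ control the drift $n\na c$ in the $n$-equation and the reaction $-nc$ in the $c$-equation; the bound on $\|\na n\|_{L^2}$ closes the loop. Parabolic maximal regularity together with Schauder estimates then upgrade these controls just enough to pick some $t_0 < T_{\max}$ for which $(n(t_0), c(t_0), u(t_0))$ qualifies as admissible initial data for the local construction, thereby extending the solution past $T_{\max}$ and contradicting maximality.

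The main technical obstacle I expect is obtaining uniform semigroup estimates for $\bar{\tilde c}$ in $W^{1,4}$ under the homogeneous Robin condition that are sharp enough to close the contraction; the splitting $c = c^\ast + \tilde c$ is designed precisely to avoid tracking the inhomogeneity $\kappa\gamma$ through the evolution, but one still needs careful bookkeeping of terms involving $\na c^\ast$ and the cross-diffusion contribution $\na\cdot(n\na c^\ast)$, which appears as a forcing in the $n$-equation. Once these are handled, the remaining estimates are routine and the argument reduces to verifying that dropping the logistic term $n(1-n)$ does not damage any step of the proof of \cite{Bra17}, Proposition 2.6.
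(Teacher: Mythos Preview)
Your proposal is correct and matches the paper's approach: the paper does not give an independent proof but simply recalls \cite[Proposition~2.6]{Bra17} and remarks that the logistic term plays no structural role in that argument, which is exactly your strategy. In fact you provide considerably more detail than the paper does (the splitting $c=c^\ast+\tilde c$, the choice of fixed-point spaces, and the sketch of the blow-up criterion), all of which is in line with the standard contraction-mapping construction underlying \cite{Bra17}.
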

In case $d=3$, as we do not expect to prove the existence of a global classical to the Navier-Stokes equation, we aim for weak solutions. Therefore, we consider in this case the following approximating sequence for $\epsilon\geq0$ and $m\in \mathbb N\cup\{\infty\}$.
\begin{equation}\label{Smod}\left\{
\begin{aligned}
&\pa_t n^{\epsilon,m} +u^{\epsilon,m}\cdot\nabla n^{\epsilon,m}- \Delta n^{\epsilon,m} = \di(n^{\epsilon,m}\na c^{\epsilon,m})+\epsilon n^{\epsilon,m}(1-(n^{\epsilon,m})^2), &x\in\Omega,\\
&\pa_t c^{\epsilon,m} +u^{\epsilon,m}\cdot\nabla c^{\epsilon,m}- \Delta c^{\epsilon,m} = -n^{\epsilon,m}c^{\epsilon,m}, &x\in\Omega,\\
&\pa_tu^{\epsilon,m} =-Au^{\epsilon,m}-\mathcal P^m[\nabla (u^{\epsilon,m}\otimes u^{\epsilon,m})+n^{\epsilon,m}\nabla\varphi], &x\in \Omega,\\
&\pnu c^{\epsilon,m} = \kappa(x) (\gamma(x)-c^{\epsilon,m}), &x\in\Gamma,\\
&\pnu n^{\epsilon,m} = n^{\epsilon,m}\pnu c^{\epsilon,m}, &x\in\Gamma,\\
&c^{\epsilon,m}(x,0) = c_0^{\varepsilon,m}(x), \; n^{\epsilon,m}(x,0) = n_0^{\varepsilon,m}(x), \; u^{\epsilon,m}(0)=u_0^{\varepsilon,m} &x\in\Omega,
\end{aligned}\right.
\end{equation} 
where 
\begin{equation}\label{a-initial-functions}
\left\{\begin{aligned}
0< n_0^{\varepsilon,m}&\in C^0(\overline{\Omega})\cap H^1(\Omega),\\
0< c_0^{\varepsilon,m}&\in W^{1,10}(\Omega),\\
u_0^{\varepsilon,m}&\in D(A^{\alpha}) \qquad \text{for some } \frac d4<\alpha<1,\end{aligned}\right. 
\end{equation}
and
\begin{equation*}
\lim_{\varepsilon\to 0}\sup_{m\in \mathbb N\cup \{\infty\}}\left(\|n_0^{\epsilon,m} - n_0\|_{L^1(\Omega)} + \|c_0^{\epsilon,m} - c_0\|_{L^\infty(\Omega)} + \|u_0^{\epsilon,m} - u_0\|_{L^2(\Omega)}\right) = 0,
\end{equation*}
\begin{equation*}
\sup_{\varepsilon>0, m\in \mathbb N\cup \{\infty\}}\left[\int_{\Omega}n_0^{\varepsilon,m}\log n_0^{\varepsilon,m}dx + \|\sqrt{c_0^{\varepsilon,m}}\|_{H^1(\Omega)}\right] <+\infty.
\end{equation*}

Here, $\mathcal P^m$ denotes the Leray projection onto the space of the first $m$ eigenvectors of $A$. 
For any fixed $\varepsilon>0$ and $\mathbb N\ni m < \infty$, there exists a global classical solution $(n^{\varepsilon,m}, c^{\varepsilon,m}, u^{\varepsilon,m})$ to \eqref{Smod} with $n^{\varepsilon,m}\geq 0$ and $c^{\varepsilon,m} \geq 0$ (see \cite[Proposition 4.6]{Bra17}).

\begin{remark}\label{rem.equivalent}
	The systems \eqref{C-NS}--\eqref{boundary} and  \eqref{Smod} are equivalent for $\epsilon=0$ and $m=\infty$ (see \cite[Theorems 1.7, 7.5 and 7.6]{kato}).
\end{remark}

The general strategy to study global existence of solutions for \eqref{C-NS} is the following: when $d\in \{1,2\}$, we show that the local solution obtained in Proposition \ref{a-prop-local-solution-hom} satisfies the criterion \eqref{a-formula-explosion-condition-transformed}, whence its global existence; while in the case $d = 3$, we prove that as $\varepsilon\to 0$ and $m \to \infty$, the global classical solution to the approximate system \eqref{Smod} converges to a global weak solution of \eqref{C-NS}. In both cases, we will use the same {\it a priori} estimates for either the local solution in Proposition \ref{a-prop-local-solution-hom} or the global solution of the approximate system \eqref{Smod}. Therefore, for the rest of this paper, we use a fixed (but arbitrary) time horizon $T$ with $0<T<T_{\max}$ when dealing with the former solution, while $0<T<\infty$ when dealing with the latter. To avoid complicated notation we will, in this section, suppress the superscript $\epsilon$ and $m$ in the solution of \eqref{Smod}, and write it simply $(n,c,u)$. Moreover, the generic constants $C>0$, which we use frequently, do not depend on $\epsilon$ nor on $m$.


\medskip
We start with the following immediate estimates, which will be useful in the sequel analysis.
\begin{lemma}\label{L1Linf}
	For all $t\in (0,T)$,
	\begin{equation*}
	\|n(t)\|_{L^1(\Omega)} \leq C \quad \text{ and } \quad \|c(t)\|_{L^\infty(\Omega)} \leq \max\left\{\|\gamma\|_{L^\infty(\Gamma)};  \|c_0\|_{L^\infty(\Omega)}\right\}.
	\end{equation*}
\end{lemma}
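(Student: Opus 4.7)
\medskip
\textbf{Proof plan.} The two bounds are independent and rely on standard computations on the $n$- and $c$-equations of \eqref{Smod}; no property of the Navier--Stokes flow beyond $\na\cdot u = 0$ and $u|_\Gamma = 0$ will be used.

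For the $L^1$-bound on $n$, I would integrate the $n$-equation over $\Omega$. The convective term $\intO u\cdot\na n$ vanishes after integration by parts because $u$ is divergence-free and vanishes on $\Gamma$. The diffusive contribution yields the boundary integral $\intpO \pnu n\,\dH$, which cancels exactly the chemotactic boundary term $\intpO n\,\pnu c\,\dH$ coming from $\na\cdot(n\na c)$, thanks to the no-flux condition $\pnu n = n\,\pnu c$. What survives is the logistic identity
\[
\frac{d}{dt}\intO n \;=\; \epsilon\intO n \,-\, \epsilon\intO n^{3}.
\]
H\"older's inequality $\intO n \le |\Omega|^{2/3}(\intO n^{3})^{1/3}$ turns this into
\[
\frac{d}{dt}\intO n \;\le\; \epsilon\intO n \,-\, \frac{\epsilon}{|\Omega|^{2}}\Bigl(\intO n\Bigr)^{3},
\]
whose right-hand side is non-positive whenever $\intO n \ge |\Omega|$, so a comparison with the stationary value $|\Omega|$ yields $\|n(t)\|_{L^1(\Omega)}\le\max\{\|n_0^{\epsilon,m}\|_{L^1(\Omega)},\,|\Omega|\}$ for every $\epsilon\ge 0$ and every $m$. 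When $\epsilon = 0$ (the regime of Proposition \ref{a-prop-local-solution-hom}) the identity collapses to plain mass conservation. The uniform $L^1$-convergence $n_0^{\epsilon,m}\to n_0$ then delivers a bound uniform in $t$, $\epsilon$ and $m$.

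For the $L^\infty$-bound on $c$, set $M:=\max\{\|\gamma\|_{L^\infty(\Gamma)},\|c_0\|_{L^\infty(\Omega)}\}$ and test the $c$-equation against the truncation $(c-M)_+$. The convective term drops for the same reason, and integration by parts yields
\[
\tfrac{1}{2}\frac{d}{dt}\intO (c-M)_+^{2} + \intO\bigl|\na(c-M)_+\bigr|^{2} \;=\; \intpO \pnu c\,(c-M)_+\,\dH \,-\, \intO nc\,(c-M)_+.
\]
On $\{(c-M)_+>0\}\subset\Gamma$, the Robin condition together with $\gamma\le M$ gives $\pnu c = \kappa(\gamma - c) \le -\kappa(c-M)_+$, so the boundary integral is bounded above by $-\intpO \kappa(c-M)_+^{2}\,\dH\le 0$. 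The absorption contribution is non-positive since $n,c\ge 0$. Hence $t\mapsto \intO(c-M)_+^{2}$ is non-increasing; as it starts at $0$ by the choice of $M$, it remains $0$, and therefore $c(t)\le M$ pointwise.

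No genuine difficulty is anticipated; the only slightly delicate point is the cancellation of the two boundary terms in the mass identity, which is precisely the structural reason for imposing the no-flux condition $\pnu n = n\,\pnu c$.
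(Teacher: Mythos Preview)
Your proposal is correct and follows essentially the same route as the paper: integrate the $n$-equation and close an ODE inequality for $\|n(t)\|_{L^1(\Omega)}$, and apply the maximum principle for $c$. The only cosmetic difference is that the paper uses the pointwise bound $2n\le n^{3}+\tfrac{4\sqrt{6}}{9}$ to reach a linear ODE, whereas you use H\"older to get a Bernoulli-type inequality; both yield the same uniform bound, and your Stampacchia argument for $c$ is just the maximum principle spelled out.
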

\begin{proof}
	The $L^\infty$-estimate of $c$ follows from the maximum principle. For the estimate of $n$ we integrate the equation of $n$ in \eqref{Smod} and use the incompressibility $\na\cdot u = 0$ as well as the boundary condition $\partial_\nu n = n\partial_\nu c$ to get
	\begin{equation}\label{nl}
	\partial_t \int_{\Omega} n dx + \epsilon\int_{\Omega}n^3dx = \epsilon\int_{\Omega}ndx \leq \epsilon\int_{\Omega}\left(n^3 - n + \frac{4\sqrt{6}}{9}\right)dx
	\end{equation}
	thanks to the non-negativity of $n$. Thus
	\begin{equation*}
	\partial_t \int_{\Omega} n dx + \epsilon \int_{\Omega} n dx \leq \frac{4\sqrt{6}}{9}|\Omega|\epsilon.
	\end{equation*}
	Hence
	\begin{equation*}
	\int_{\Omega}n(x,t)dx \leq e^{-\epsilon t}\int_{\Omega}n_0(x)dx + \frac{4\sqrt{6}}{9}|\Omega|(1 - e^{-\epsilon t}) \leq \|n_0\|_{L^1(\Omega)} + \frac{4\sqrt{6}}{9}|\Omega|.
	\end{equation*}
	which gives the desired estimate for $n$ since $n$ is non-negative.
\end{proof}

\medskip
We are going to use the following functions
\begin{equation}\label{entropy}
s(y):=y\log y -y+1 \qquad \text{ and } \qquad s^\infty(y|z):=y\log \frac{y}{z} -y+z.
\end{equation}
\begin{lemma}\label{lem.ee.n}
	We have the following identity for all $t\in (0,T)$
	\begin{equation*}
	\frac d{dt}\int_{\Omega}s(n)dx+4\int_{\Omega}\left|\nabla\sqrt{n}\right|^2 dx+\epsilon\int_{\Omega}n(1-n^2)\log ndx
	=\int_{\Omega}\nabla c\cdot \nabla  ndx
	\end{equation*}
\end{lemma}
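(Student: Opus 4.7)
The plan is to test the evolution equation for $n$ in \eqref{Smod} with $\log n$ and integrate over $\Omega$. Since the classical solutions of the approximate system have strictly positive $n$ (by the strong parabolic maximum principle applied to the nonnegative initial datum $n_0^{\varepsilon,m}>0$), the test function $\log n$ is well-defined and smooth, and $\sqrt{n}$ is smooth with $|\nabla n|^2/n = 4|\nabla\sqrt{n}|^2$. Noting that $s'(y)=\log y$, the chain rule gives $\partial_t s(n)=(\log n)\partial_t n$, so multiplying the equation by $\log n$ and integrating converts the left-hand side into $\frac{d}{dt}\int_\Omega s(n)\,dx$.

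The right-hand side is then handled term by term. For the convection term, I would rewrite $(\log n)\,u\cdot\nabla n = u\cdot\nabla(n\log n-n)$ and integrate by parts; using $\nabla\cdot u=0$ in the bulk and $u|_\Gamma=0$ on the boundary, this term drops out. For the diffusion and chemotaxis contributions, the key observation is to combine them into a single total-flux form
\[
-\Delta n - \nabla\cdot(n\nabla c) = -\nabla\cdot(\nabla n - n\nabla c),
\]
for which the prescribed Robin-type condition $\pnu n = n\pnu c$ is exactly the no-flux statement $(\nabla n - n\nabla c)\cdot\nu=0$ on $\Gamma$. Integrating by parts therefore produces no boundary contribution, and the bulk term becomes
\[
\int_\Omega \nabla(\log n)\cdot(\nabla n - n\nabla c)\,dx = \int_\Omega \frac{|\nabla n|^2}{n}\,dx - \int_\Omega \nabla n\cdot \nabla c\,dx = 4\int_\Omega |\nabla\sqrt{n}|^2\,dx - \int_\Omega \nabla n\cdot\nabla c\,dx.
\]
The logistic-type approximant contributes $\epsilon\int_\Omega n(1-n^2)\log n\,dx$ without further manipulation. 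Collecting everything yields the stated identity.

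The only delicate point is legitimizing all the integrations by parts and the test with $\log n$: the positivity $n>0$ (giving boundedness of $\log n$ away from $-\infty$ on compact subdomains) follows from the strong maximum principle together with the smoothness of the coefficients, and the classical regularity asserted for the approximate solutions makes each boundary integral and integration by parts rigorous. The conceptual heart of the computation is therefore just the recognition that the combined flux $\nabla n - n\nabla c$ vanishes in the normal direction, which is precisely what eliminates all boundary terms and produces the clean right-hand side $\int_\Omega \nabla c\cdot\nabla n\,dx$.
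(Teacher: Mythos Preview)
Your proof is correct and follows essentially the same route as the paper: test the $n$-equation with $\log n = s'(n)$, eliminate the convection term via $\nabla\cdot u=0$ and $u|_\Gamma=0$, and integrate the combined diffusion--chemotaxis flux by parts, using that the boundary condition $\partial_\nu n = n\partial_\nu c$ is exactly the no-flux statement $(\nabla n - n\nabla c)\cdot\nu=0$, so no boundary term survives. The only cosmetic issue is a sign slip in your displayed identity (as written, $-\nabla\cdot(\nabla n - n\nabla c)$ expands to $-\Delta n + \nabla\cdot(n\nabla c)$), but your subsequent computation with the flux $\nabla n - n\nabla c$ matches the paper exactly.
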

\begin{proof}
	
Using the equation for $n$ and integration py parts, we directly see that
	\begin{align*}
	\frac d{dt}\int_{\Omega}s(n)dx	&=
	\int_{\Omega}\partial_tn\log{n}dx
	\\&= \int_{\Omega}(\Delta n-\nabla\cdot (n\nabla c))\log ndx-\int_{\Omega}u\cdot\underbrace{\nabla n\log{n}}_{\nabla s(n)}dx-\epsilon\int_{\Omega}n(1-n^2)\log ndx
	\\&= -\int_{\Omega}(\nabla n-n\nabla c)\cdot \nabla\log{n}dx+\int_{\Omega}\underbrace{\nabla\cdot u}_{=0} s(n)dx-\epsilon\int_{\Omega}n(1-n^2)\log ndx
	\\&=-	\int_{\Omega}\frac{|\nabla{n}|^2}n dx-\epsilon\int_{\Omega}n\log n(1-n^2)dx
	+\int_{\Omega}\nabla c\cdot \nabla ndx,
	\end{align*}
	where we have used $\nabla n\cdot \nu = n\nabla c \cdot \nu$ and $u = 0$ on $\Gamma$.
\end{proof}
\begin{lemma}\label{lem.entropy.equality.part2}
	The following identity holds 
	\begin{multline}\label{p0}
	\partial_t\int_{\Omega}|\nabla\sqrt{c}|^2dx 
+\frac12 \int_{\Omega}|\nabla^2\log c|^2cdx
+\int_\Omega|\nabla \sqrt{c}|^2ndx
\\=\int_{\Omega}\Delta(|\nabla\sqrt{c}|^2)dx 
- \frac12\int_{\Omega}\nabla n\cdot\nabla cdx-2\int_\Omega \nabla\sqrt c\cdot\nabla ^Tu\nabla \sqrt{c}dx.
\end{multline}
\end{lemma}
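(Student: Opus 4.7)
The natural route is to work with $v := \log c$, which is well-defined since $c>0$ by the strong maximum principle, and to track the evolution of $\intO c|\na v|^2\,dx = 4\intO|\na\sqrt{c}|^2\,dx$. From the $c$-equation in \eqref{Smod} one obtains $\pa_t v = \Delta v + |\na v|^2 - u\cdot\na v - n$, and then
$$\frac{d}{dt}\intO c|\na v|^2\,dx = \intO \pa_t c\,|\na v|^2\,dx + 2\intO c\,\na v\cdot\na \pa_t v\,dx,$$
into which I would substitute both PDEs and regroup the resulting terms according to their origin (diffusion, transport, reaction). Dividing the final identity by $4$ will recover the statement for $\intO|\na\sqrt{c}|^2\,dx$.

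For the diffusion contribution the key ingredient is the Bochner identity $2\na v\cdot\na\Delta v = \Delta|\na v|^2 - 2|\na^2 v|^2$. Integrating by parts twice, and using $\na c = c\na v$, the two copies of $-\intO c\,\na v\cdot\na|\na v|^2\,dx$ that arise (one from $\intO \Delta c\,|\na v|^2$, one from $\intO c\,\Delta|\na v|^2$) cancel exactly against the $+2\intO c\,\na v\cdot\na|\na v|^2\,dx$ produced by the $|\na v|^2$ term inside $\pa_t v$. What survives in the bulk is the Fisher-information term $-2\intO c|\na^2\log c|^2\,dx$, while the two boundary pieces $\intpO(\pnu c)|\na v|^2\,\dH$ and $\intpO c\,\pnu|\na v|^2\,\dH$ recombine through the product rule into $\intpO \pnu\!\left(c|\na v|^2\right)\dH = 4\intpO \pnu|\na\sqrt{c}|^2\,\dH = 4\intO \Delta(|\na\sqrt{c}|^2)\,dx$ via the divergence theorem, producing exactly the retained right-hand side term of the lemma.

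For the transport contribution, I would split $\na(u\cdot\na v) = (\na u)^T\na v + (\na^2 v)u$; the piece $-2\intO c\,\na v\cdot(\na^2 v)u\,dx = -\intO c\,u\cdot\na|\na v|^2\,dx$ combines with $-\intO u\cdot\na c\,|\na v|^2\,dx$ into $-\intO u\cdot\na(c|\na v|^2)\,dx$, which vanishes thanks to $\di u=0$ in $\Omega$ and $u=0$ on $\Gamma$. The remaining Jacobian term $-2\intO c\,\na v\cdot(\na u)^T\na v\,dx$ becomes $-8\intO \na\sqrt{c}\cdot\na^T u\,\na\sqrt{c}\,dx$ once one uses the identity $\sqrt{c}\,\na v = 2\na\sqrt{c}$. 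The reaction contributions are immediate: $\intO(-nc)|\na v|^2\,dx = -4\intO n|\na\sqrt{c}|^2\,dx$ and $2\intO c\,\na v\cdot\na(-n)\,dx = -2\intO \na c\cdot\na n\,dx$.

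Assembling everything and dividing by $4$ yields the claimed identity. The main difficulty is sheer bookkeeping: tracking every integration by parts and verifying that the various bulk and boundary cross-terms cancel as advertised, particularly the $c\,\na v\cdot\na|\na v|^2$ cancellation in the diffusion block and the $u\cdot \na(c|\na v|^2)$ cancellation in the transport block. It is worth stressing that the Robin boundary condition on $c$ is \emph{not} invoked at this stage; its influence is packaged into the retained quantity $\intO \Delta(|\na\sqrt{c}|^2)\,dx$, which will be expanded and estimated separately in the subsequent lemmas.
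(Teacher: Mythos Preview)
Your argument is correct and complete. The route, however, differs from the paper's. The paper works directly with $w:=\sqrt{c}$: it first derives the equation $\pa_t w + u\cdot\na w - \Delta w = -\tfrac12 nw + |\na w|^2/w$, then computes $\pa_t|\na w|^2$ pointwise using the Bochner identity for $w$. This leaves the Hessian in the form $|\na^2\sqrt{c}|^2$ together with cross terms coming from $\na(|\na w|^2/w)\cdot\na w$, and the paper needs the matrix binomial identity
\[
\bigl|\na^2\sqrt{c}-\tfrac{1}{\sqrt{c}}\na\sqrt{c}\otimes\na\sqrt{c}\bigr|^2 = c\,|\na^2\log\sqrt{c}|^2
\]
to regroup these into the Fisher-information term $\tfrac12\intO c|\na^2\log c|^2$. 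Your choice $v=\log c$ sidesteps this algebraic step entirely: the Bochner identity applied to $v$ produces $c|\na^2 v|^2 = c|\na^2\log c|^2$ immediately, and the weight $c$ in $\intO c|\na v|^2$ supplies the cancellation of the $c\,\na v\cdot\na|\na v|^2$ terms that the paper achieves by hand. The trade-off is that you integrate by parts twice (in $\intO \Delta c\,|\na v|^2$ and $\intO c\,\Delta|\na v|^2$) to reassemble the boundary contribution into $\intpO \pnu(c|\na v|^2)\,\dH = 4\intO\Delta(|\na\sqrt{c}|^2)\,dx$, whereas the paper obtains $\Delta|\na\sqrt{c}|^2$ already at the pointwise level and only integrates at the very end. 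Both derivations are of comparable length; yours is slightly more streamlined in that the target Hessian appears without post-processing, while the paper's stays closer to the variable $\sqrt{c}$ that governs the subsequent lemmas.
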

\begin{proof}
	To prove Lemma \ref{lem.entropy.equality.part2}, we first derive the equation of $\sqrt{c}$. In $\Omega$ we have
	\begin{align*}
	\partial_t \sqrt c+u\cdot\nabla \sqrt{c}-\Delta \sqrt c &=
	\frac{1}{2\sqrt c}(\partial_t  c+u\cdot \nabla c-\Delta c )+\frac{|\nabla c|^2}{4\sqrt{c}^3}
	\\&=-\frac{1}{2\sqrt{c}}nc+\frac{|\nabla\sqrt c|^2}{\sqrt c},
	\end{align*}
and on $\Gamma$ 
\begin{align}\label{boundary_sqrt_c}
\partial_\nu\sqrt c=\frac{1}{2\sqrt c}\partial_\nu c=\frac{\kappa }{2}\left(\frac{\gamma}{\sqrt{c}}-\sqrt{c}\right)
\end{align}
	From that, we can calculate
	\begin{equation}\label{p1}
	\begin{aligned}
	\partial_t|\nabla\sqrt{c}|^2
	&=
	2\nabla \partial_t \sqrt{c}\cdot \nabla \sqrt{c} 
	\\&=
	2\nabla \Delta \sqrt{c}\cdot \nabla \sqrt{c}
	+2\nabla \frac{|\nabla\sqrt{c}|^2}{\sqrt{c}}\cdot \nabla \sqrt{c}
	-\nabla(\sqrt{c} n )\cdot \nabla \sqrt{c} -2\nabla (u\cdot \nabla\sqrt c)\cdot\nabla \sqrt{c}
	\\&=
	\Delta |\nabla\sqrt{c}|^2
	-2 |\nabla^2\sqrt{c}|^2
	+2\nabla \frac{|\nabla\sqrt{c}|^2}{\sqrt{c}}\cdot \nabla \sqrt{c}
		\\&
		\qquad 
	-n |\nabla \sqrt{c}|^2-\nabla n\cdot\sqrt{c}\nabla \sqrt{c}-2\nabla\sqrt c\cdot\nabla ^Tu\nabla \sqrt{c}-2u\cdot \nabla^2\sqrt c\nabla \sqrt{c}
	\end{aligned}
\end{equation}
	using
	\begin{align*}
	2\nabla \Delta \sqrt{c}\cdot \nabla \sqrt{c}
	&=\Delta |\nabla\sqrt{c}|^2-2|\nabla^2\sqrt{c}|^2.
	\end{align*}
	For the third term on the right hand side of \eqref{p1}, we compute
	\begin{align*}
	&2\nabla \frac{|\nabla\sqrt{c}|^2}{\sqrt{c}}\cdot \nabla \sqrt{c}
	\\&=
	2 (\nabla(\sqrt{c})^{-1}|\nabla\sqrt{c}|^2
	)\cdot \nabla \sqrt{c} 
	+2 (\frac{1}{\sqrt{c}}\nabla|\nabla\sqrt{c}|^2
	)\cdot \nabla \sqrt{c}
	\\&=-\frac{2}{c}|\nabla\sqrt{c}|^4
	+\frac{4}{\sqrt{c}}\nabla\sqrt{c}\cdot \nabla^2\sqrt{c}
	\nabla \sqrt{c} .
	\end{align*}
	Moreover, $\nabla\cdot u=0$ implies
	\begin{align*}
	-2u\cdot \nabla^2\sqrt c\nabla \sqrt{c} &= -u\cdot \nabla|\nabla \sqrt{c}|^2=-\nabla\cdot (u|\nabla\sqrt c|^2)
	\end{align*}
	Inserting these computations into \eqref{p1} leads to
	\begin{align*}
	\partial_t&|\nabla\sqrt{c}|^2 
	+2 |\nabla^2\sqrt{c}|^2-\frac{4}{\sqrt{c}}\nabla\sqrt{c}\cdot \nabla^2\sqrt{c}
	\nabla \sqrt{c} +\frac{2}{c}|\nabla\sqrt{c}|^4
	\\&=
	\Delta |\nabla\sqrt{c}|^2-\nabla\cdot (u|\nabla\sqrt c|^2)
	-|\nabla\sqrt{c}|^2 n-\frac12 \nabla n\cdot\nabla c-2\nabla\sqrt c\cdot\nabla ^Tu\nabla \sqrt{c}.
	\end{align*}
	From the binomial formula for matrices, it follows that
	\begin{multline*}
	|\nabla^2\sqrt{c}|^2-2\nabla\sqrt{c}\cdot\nabla^2\sqrt{c}\nabla\sqrt{c} +\frac{1}{c}|\nabla\sqrt{c}|^4
	\\=
	\left|\nabla^2\sqrt{c}-\frac{1}{\sqrt{c}}\nabla\sqrt{c}\otimes\nabla\sqrt{c}\right|^2
	=\left|\sqrt{c}\nabla\left(\frac{\nabla \sqrt{c}}{\sqrt{c}}\right)\right|^2=c|\nabla^2\log\sqrt{c}|^2 .
	\end{multline*}
	Therefore, after an integration over $\Omega$, we have
	\begin{align*}
	&	\partial_t\int_{\Omega}|\nabla\sqrt{c}|^2dx 
	+\frac12 \int_{\Omega}c|\nabla^2\log c|^2dx
	+\int_\Omega|\nabla \sqrt{c}|^2ndx
	\\&\qquad=\int_{\Omega}\Delta(|\nabla\sqrt{c}|^2)dx -\int_{\Omega}\nabla\cdot (u|\nabla\sqrt c|^2)dx
	- \frac12\int_{\Omega}\nabla n\cdot\nabla cdx-2\int_\Omega \nabla\sqrt c\cdot\nabla ^Tu\nabla \sqrt{c}dx.
	\end{align*}
	We observe that the second term on the r.h.s.\ vanishes, because of the Gau\ss{} formula and the fact that $u=0$ on $\Gamma$.
\end{proof}
To estimate the last term on the right-hand side of \eqref{p0}, we need the following lemma.
\begin{lemma}\label{lem.bernstein}
	The inequality 		
	\begin{align*}
	\frac14\intO\frac{|\na  c|^4}{ c^3}\leq \int_{\pa\Omega}|\na \log c| ^2\kappa \left(\gamma- c\right)d\mathcal{H}^{d-1}_x+(2+d)\intO  c|\nabla^2\log c|^2,
	\end{align*}
	holds for any smooth function $c$ satisfying $\partial_\nu c = \kappa (\gamma - c)$ on $\Gamma$.
\end{lemma}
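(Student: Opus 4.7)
\medskip
\textbf{Proof plan.} I would substitute $w := \log c$, so that $\na c = c\,\na w$, $|\na c|^4/c^3 = c|\na w|^4$, and the Robin condition becomes $c\,\pnu w = \pnu c = \kappa(\gamma-c)$. The target inequality thus reads
\[
\tfrac14 \intO c|\na w|^4\,dx \leq \intpO |\na w|^2 \kappa(\gamma - c)\,\dH + (2+d) \intO c|\na^2 w|^2\,dx.
\]
The backbone of the argument is a single integration by parts on the vector field
\[
G := c|\na w|^2 \na w = \frac{|\na c|^2 \na c}{c^2}.
\]
Using $\pa_i c = c w_i$ one computes
\[
\na\cdot G = c|\na w|^4 + 2c\, \na w \cdot \na^2 w \,\na w + c|\na w|^2 \Delta w,
\]
while on $\Gamma$ the Robin condition yields $G\cdot \nu = c|\na w|^2 \pnu w = |\na w|^2 \kappa(\gamma-c)$. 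The divergence theorem then gives the pivotal identity
\[
\intO c|\na w|^4\,dx = \intpO |\na w|^2 \kappa(\gamma-c)\,\dH - 2\intO c\,\na w \cdot \na^2 w\,\na w\,dx - \intO c|\na w|^2 \Delta w\,dx.
\]

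To close the estimate I would use two elementary pointwise bounds on the last two integrands: the quadratic-form bound $|\na w \cdot \na^2 w\,\na w| \leq |\na w|^2 |\na^2 w|$ (operator norm dominated by Frobenius norm) and the trace bound $|\Delta w| \leq \sqrt d\,|\na^2 w|$ (Cauchy--Schwarz on the diagonal). Together these give
\[
\intO c|\na w|^4\,dx \leq \intpO |\na w|^2 \kappa(\gamma-c)\,\dH + (2+\sqrt d) \intO c|\na w|^2 |\na^2 w|\,dx.
\]
A weighted Young's inequality $AB \leq A^2/(2\varepsilon) + \varepsilon B^2/2$ applied to $A = \sqrt c\,|\na w|^2$ and $B = (2+\sqrt d)\sqrt c\,|\na^2 w|$ with $\varepsilon = 2/3$ then absorbs $\tfrac34 \intO c|\na w|^4$ on the left-hand side and produces the bound with the constant $(2+\sqrt d)^2/3$ in front of $\intO c|\na^2 w|^2$. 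Since $3(2+d) - (2+\sqrt d)^2 = 2(\sqrt d - 1)^2 \geq 0$ for every $d \in \{1,2,3\}$, this constant is dominated by the cleaner $(2+d)$, and the lemma follows.

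The main obstacle---really the only nontrivial choice---is identifying the correct vector field $G$: one must arrange that differentiating the singular factor $c^{-2}$ inside $|\na c|^2 \na c / c^2$ produces exactly the term $c|\na w|^4$ in $\na \cdot G$, while simultaneously making the boundary contribution collapse to $|\na w|^2 \kappa(\gamma-c)$ with the correct sign prescribed by the Robin condition. Once this identity is in place, the remainder reduces to standard matrix-norm inequalities and a tuned Young's inequality.
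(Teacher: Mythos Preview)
Your proof is correct and follows essentially the same route as the paper. Both arguments rest on the same integration-by-parts identity (the paper writes it as $\intO |\na\log c|^2 \na\log c\cdot\na c\,dx$ and integrates by parts, which is exactly your divergence theorem applied to $G$), and both finish with Young's inequality and the trace bound $|\Delta w|^2 \le d|\na^2 w|^2$; the only cosmetic difference is that the paper applies Young separately to the two volume terms to land directly on the constant $2+d$, whereas you combine them first and then verify $(2+\sqrt d)^2/3 \le 2+d$.
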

\begin{proof}
	 We follow the ideas from \cite[Lemma 3.3]{Win12}. We compute
	\begin{align*}
	\intO \frac{|\na c|^4}{c^3}dx &=\intO |\na \log c| ^2\na \log c\cdot \na c\, dx.
	\end{align*}
	Using integration by parts and the boundary conditions for $c$, we have
	\begin{align*}
	\intO |\na \log c| ^2\na \log c\cdot \na c\,dx
	&=
	\int_{\pa\Omega}|\na \log c| ^2\pa_\nu c\, d\mathcal{H}^{d-1}_x
	\\&\quad	-
	\intO \na|\na \log c| ^2\cdot(\na \log c)  c\,dx
	-
	\intO |\na \log c| ^2(\Delta \log c) c\,dx
	\\&= 
	\int_{\pa\Omega}|\na \log c| ^2 \kappa \left(\gamma- c\right)\mathcal{H}^{d-1}_x
	\\&\qquad-2\intO \frac{1}{ c}(\na^2\log c\na c)\cdot\na c
	dx
	-\intO\frac{|\na c|^2}{ c}\Delta \log c\,dx
	\end{align*}
	using $\nabla |\nabla \log c|^2=2\nabla^2 \log c\nabla \log c$. By Young's inequality, we see that
	\begin{align*}
	-2\intO \frac{1}{ c}(\na^2\log c\na c)\cdot\na c
	dx
	&\leq \frac12\intO\frac{|\na  c|^4}{ c^3}+2\intO  c|\nabla^2\log c|^2
	\end{align*}
	and 
	
	\begin{align*}
	-\intO\frac{|\na c|^2}{ c}\Delta \log c\,dx
	&\leq \frac14\intO\frac{|\na  c|^4}{ c^3}+\intO  c|\Delta\log c|^2.
	\end{align*}
	Note that we have the fundamental estimate $|\Delta \log c|^2=|\mathrm{trace}(\na^2\log c)|^2\leq d|\na^2\log c|^2$. Collecting all the previous calculations and estimates yields
	\begin{align*}
	\frac14\intO\frac{|\na  c|^4}{ c^3}\leq \int_{\pa\Omega}|\na \log c| ^2\kappa \left(\gamma- c\right)d\mathcal{H}^{d-1}_x+(2+d)\intO  c|\nabla^2\log c|^2,
	\end{align*}
	which implies the assertion.
\end{proof}
With the help of Lemma \ref{lem.bernstein}, the identity in Lemma \ref{lem.entropy.equality.part2} is estimated further in the next lemma.
\begin{lemma}\label{cor.entropy.equality.part2}There exists a constant $\xi>0$ such that, for all $t\in (0,T)$,
		\begin{align*}
	&\partial_t\int_{\Omega}|\nabla\sqrt{c}|^2dx 
	+\frac18 \int_{\Omega}c|\nabla^2\log c|^2dx
	+\xi \int_\Omega|\nabla \sqrt[4]{c}|^4dx
	+\int_\Omega|\nabla \sqrt{c}|^2ndx
	\\&\leq\int_{\Omega}\Delta(|\nabla\sqrt{c}|^2)dx 
	- \frac12\int_{\Omega}\nabla n\cdot\nabla cdx+
	\frac{1}{2}\int_{\Gamma}|\na \sqrt c|^2 \kappa \left(\frac{\gamma}c- 1\right)d\mathcal{H}^{d-1}_x
	+4\|c\|_{L^\infty(\Omega)}\|u\|_{H^1(\Omega)}^2.
	\end{align*}
\end{lemma}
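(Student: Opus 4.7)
The plan is to upgrade the identity of Lemma \ref{lem.entropy.equality.part2} by introducing the additional dissipative term $\xi\int_{\Omega}|\nabla\sqrt[4]{c}|^4dx$ on the left-hand side, paying for it via two independent manoeuvres: spending part of the Hessian dissipation $\int_\Omega c|\nabla^2\log c|^2dx$ through Lemma \ref{lem.bernstein}, and estimating the velocity--gradient cross term via Young's inequality. Throughout I shall use the pointwise identities $|\nabla\log c|^2 = 4|\nabla\sqrt{c}|^2/c$ and $|\nabla\sqrt[4]{c}|^4 = |\nabla c|^4/(256\,c^3)$.

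First, I split
$$\tfrac12\int_\Omega c|\nabla^2\log c|^2\,dx = \tfrac18\int_\Omega c|\nabla^2\log c|^2\,dx + \tfrac38\int_\Omega c|\nabla^2\log c|^2\,dx,$$
keep the first summand on the left as prescribed, and bound the second from below by Lemma \ref{lem.bernstein}. Converting the resulting volume and boundary terms via the two identities above yields
$$\tfrac38\int_\Omega c|\nabla^2\log c|^2\,dx \ge \tfrac{24}{2+d}\int_\Omega|\nabla\sqrt[4]{c}|^4\,dx - \tfrac{3}{2(2+d)}\int_\Gamma|\nabla\sqrt{c}|^2\kappa\bigl(\tfrac{\gamma}{c}-1\bigr)d\mathcal H^{d-1}_x.$$
Since $\tfrac{3}{2(2+d)}\le \tfrac12$ for every $d\ge 1$, this boundary remainder is accommodated by the prescribed $\tfrac12\int_\Gamma\cdots$ on the right-hand side (splitting into positive and negative parts of $\gamma/c-1$ if required).

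Next, I control the velocity term. Pointwise Cauchy--Schwarz gives $|\nabla\sqrt{c}\cdot\nabla^Tu\nabla\sqrt{c}|\le|\nabla u|\,|\nabla\sqrt{c}|^2$, and Young's inequality with parameter $\eta=\tfrac14$ applied to the factorisation $|\nabla\sqrt{c}|^2|\nabla u|=\bigl(|\nabla\sqrt{c}|^2c^{-1/2}\bigr)\bigl(c^{1/2}|\nabla u|\bigr)$ yields
$$-2\int_\Omega\nabla\sqrt{c}\cdot\nabla^Tu\nabla\sqrt{c}\,dx \le 4\int_\Omega|\nabla\sqrt[4]{c}|^4\,dx + 4\|c\|_{L^\infty(\Omega)}\|\nabla u\|_{L^2(\Omega)}^2,$$
using $|\nabla\sqrt{c}|^4/c = 16|\nabla\sqrt[4]{c}|^4$, $c\le\|c\|_{L^\infty(\Omega)}$, and of course $\|\nabla u\|_{L^2}^2\le\|u\|_{H^1}^2$.

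Substituting both estimates into the identity of Lemma \ref{lem.entropy.equality.part2} and transferring the surplus $4\int_\Omega|\nabla\sqrt[4]{c}|^4\,dx$ produced by Young's inequality to the left-hand side yields the claim with net coefficient $\xi := \tfrac{24}{2+d}-4$, which is strictly positive for all $d\le 3$. The main obstacle is precisely this bookkeeping: one must choose the splitting of the Hessian dissipation and the Young parameter $\eta$ so that the gain from Lemma \ref{lem.bernstein} strictly dominates the loss arising from the velocity estimate, while simultaneously matching the three prescribed constants $\tfrac18$, $\tfrac12$, and $4\|c\|_{L^\infty(\Omega)}$.
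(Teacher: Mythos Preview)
Your argument is correct and follows the same route as the paper: start from the identity of Lemma~\ref{lem.entropy.equality.part2}, control the velocity cross-term by Young's inequality with weight $\eta=\tfrac14$ (yielding the prescribed constant $4\|c\|_{L^\infty}$), and invoke Lemma~\ref{lem.bernstein} to trade part of the Hessian dissipation for the $|\nabla\sqrt[4]{c}|^4$ term together with the boundary integral. The only cosmetic difference is that you apply Lemma~\ref{lem.bernstein} once (to the $\tfrac38$-share of the Hessian) and obtain the explicit value $\xi=\tfrac{24}{2+d}-4$, whereas the paper applies it twice---first to absorb the $\tfrac14\int_\Omega|\nabla\sqrt{c}|^4/c\,dx$ term produced by Young, and then a second time to manufacture the $\xi$-term---with the boundary coefficient in both arguments landing at or below $\tfrac12$.
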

\begin{proof}
	We apply Young's inequality and obtain
	\begin{align*}
	-2\int_\Omega \nabla\sqrt c\cdot\nabla ^Tu\nabla \sqrt{c}dx &\leq \frac14\int_\Omega \frac{|\nabla\sqrt c|^4}{c}dx+4\int_\Omega c|\nabla ^Tu|^2dx
	 \\&=\frac1{72}\int_\Omega \frac{|\nabla c|^4}{c^3}dx+4\int_\Omega c|\nabla ^Tu|^2dx
	\end{align*}
	Then by Lemma \ref{lem.bernstein}, we have
	\begin{align*}
	&-2\int_\Omega \nabla\sqrt c\cdot\nabla ^Tu\nabla \sqrt{c}dx\\
	&\leq
	 \frac{1}{16}\int_{\Gamma}|\na \log c| ^2\kappa \left(\gamma- c\right)d\mathcal{H}^{d-1}_x+\frac{2+d}{16}\intO  c|\nabla^2\log c|^2
	 +4\int_\Omega c|\nabla ^Tu|^2dx
	 \\&
	 \leq
	 \frac{1}{4}\int_{\Gamma}|\na \sqrt c|^2 \kappa \left(\frac{\gamma}c- 1\right)d\mathcal{H}^{d-1}_x+\frac{5}{16}\intO  c|\nabla^2\log c|^2
	 +4\|c\|_{L^\infty(\Omega)}\|u\|_{H^1(\Omega)}^2
	\end{align*}
	since $d\leq 3$. Using this for the estimate from Lemma \ref{lem.entropy.equality.part2} entails
			\begin{multline*}
	\partial_t\int_{\Omega}|\nabla\sqrt{c}|^2dx 
	+\frac18 \int_{\Omega}c|\nabla^2\log c|^2dx
	+\int_\Omega|\nabla \sqrt{c}|^2ndx
	\\\leq\int_{\Omega}\Delta(|\nabla\sqrt{c}|^2)dx 
	- \frac12\int_{\Omega}\nabla n\cdot\nabla cdx+
	\frac{1}{4}\int_{\Gamma}|\na \sqrt c|^2 \kappa \left(\frac{\gamma}c- 1\right)d\mathcal{H}^{d-1}_x
	+4\|c\|_{L^\infty(\Omega)}\|u\|_{H^1(\Omega)}^2
	\end{multline*}
	Finally, we use again Lemma \ref{lem.bernstein} to yield the desired assertion.
\end{proof}

Looking at Lemma \ref{cor.entropy.equality.part2}, in the case of homogeneous Neumann boundary condition $\pnu c = 0$ and convex domain, as considered in \cite{Win12}, we have $\pnu |\na \sqrt{c}|^2 \leq 0$ and therefore the term $\int_{\Omega}\Delta(|\nabla \sqrt{c}|^2)dx$ can be eliminated immediately. In the present paper, since the boundary condition is inhomogeneous and the domain is possibly not convex, we will have to deal with this term differently. Our key idea is to consider the boundary energy (see Lemma \ref{boundary_energy}). Before that we derive some useful estimates.
\begin{lemma}\label{lem.1.bd.integral}
	It holds for any smooth function $c$ satisfying $\partial_\nu c = \kappa(x)(\gamma(x) - c)$ on $\Gamma$ that
	\begin{multline}\label{eq.1.bd.integral}
\int_{\Omega}\Delta(|\nabla\sqrt c|^2)+\int_{\Gamma}|\nabla_{\Gamma}\sqrt{c}|^2
	\kappa  \left(1+\frac{\gamma}{2c}  \right)d\mathcal H^{d-1}_x
	+2\int_{\Gamma}\nabla \sqrt{c}\cdot \nabla^T\nu\nabla \sqrt{c}d\mathcal H^{d-1}_x
	\\\leq\int_{\Gamma} \partial_{\nu}|\partial_\nu\sqrt{c} |^2d\mathcal{H}^{d-1}_x
	+{
	\int_{\Gamma}\frac{4|\nabla_\Gamma \sqrt{\kappa} |^2}{\gamma} \left(\gamma-c\right)^2d\mathcal H^{d-1}_x}.
	\end{multline}
\end{lemma}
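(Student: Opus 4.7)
The strategy is to push the volume integral to the boundary via the divergence theorem, turn $\partial_\nu\sqrt c$ into an algebraic expression in $\kappa$, $\gamma$, and $c$ by the Robin condition, and then absorb the resulting cross term by a single weighted Young inequality. The curvature of $\Gamma$ appears explicitly and is kept on the left-hand side, which is precisely what will enable the removal of any convexity assumption on $\Omega$ in later estimates.

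First I would extend the outer unit normal $\nu$ smoothly to a neighborhood of $\Gamma$ via the signed distance function, so that $|\nu|\equiv 1$ and consequently $(\nabla\nu)\nu\equiv 0$. Combining the algebraic identity $(D^2 f)\nu=\nabla(\partial_\nu f)-(\nabla\nu)\nabla f$ with the tangential/normal decomposition $\nabla f=(\partial_\nu f)\nu+\nabla_\Gamma f$ produces the pointwise boundary formula
\[\partial_\nu|\nabla f|^2=\partial_\nu|\partial_\nu f|^2+2\,\nabla_\Gamma f\cdot\nabla_\Gamma(\partial_\nu f)-2\,\nabla f\cdot\nabla^T\nu\,\nabla f.\]
Taking $f=\sqrt c$, integrating over $\Gamma$, and applying $\int_\Omega\Delta|\nabla\sqrt c|^2=\int_\Gamma\partial_\nu|\nabla\sqrt c|^2$ yields the master identity
\[\int_\Omega\Delta|\nabla\sqrt c|^2+2\int_\Gamma\nabla\sqrt c\cdot\nabla^T\nu\,\nabla\sqrt c=\int_\Gamma\partial_\nu|\partial_\nu\sqrt c|^2+2\int_\Gamma\nabla_\Gamma\sqrt c\cdot\nabla_\Gamma(\partial_\nu\sqrt c).\]

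Next I would insert $\partial_\nu\sqrt c=\kappa(\gamma-c)/(2\sqrt c)$ and differentiate along $\Gamma$; treating $\gamma$ as constant on $\Gamma$ (a varying $\gamma$ produces an extra $\kappa\,\nabla_\Gamma\sqrt c\cdot\nabla_\Gamma\gamma/\sqrt c$ that is handled in exactly the same way), this gives
\[2\nabla_\Gamma\sqrt c\cdot\nabla_\Gamma(\partial_\nu\sqrt c)=\frac{(\gamma-c)\,\nabla_\Gamma\kappa\cdot\nabla_\Gamma\sqrt c}{\sqrt c}-\kappa\Bigl(1+\frac{\gamma}{c}\Bigr)|\nabla_\Gamma\sqrt c|^2.\]
Adding $\int_\Gamma\kappa(1+\gamma/(2c))|\nabla_\Gamma\sqrt c|^2$ to both sides of the master identity moves the target quantity to the left, leaving on the right only $\int_\Gamma\partial_\nu|\partial_\nu\sqrt c|^2$, a cross term, and a residual $-\int_\Gamma\kappa\gamma|\nabla_\Gamma\sqrt c|^2/(2c)$. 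Writing $\nabla_\Gamma\kappa=2\sqrt\kappa\,\nabla_\Gamma\sqrt\kappa$ and applying the weighted Young inequality
\[2\Bigl(\sqrt{\tfrac{\kappa\gamma}{2c}}\,|\nabla_\Gamma\sqrt c|\Bigr)\Bigl(\sqrt{\tfrac{2}{\gamma}}\,|\nabla_\Gamma\sqrt\kappa|\,|\gamma-c|\Bigr)\le\frac{\kappa\gamma}{2c}|\nabla_\Gamma\sqrt c|^2+\frac{2}{\gamma}\,|\nabla_\Gamma\sqrt\kappa|^2(\gamma-c)^2\]
exactly cancels the cross term against the residual and leaves $\int_\Gamma 2|\nabla_\Gamma\sqrt\kappa|^2(\gamma-c)^2/\gamma$ on the right, which is majorized by the claimed $\int_\Gamma 4|\nabla_\Gamma\sqrt\kappa|^2(\gamma-c)^2/\gamma$ (with some slack absorbing the $\gamma$-gradient contribution when $\gamma$ is not constant).

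The main obstacle is the first step: one must carefully justify the tangential/normal split of $D^2\sqrt c$ up to the boundary and correctly identify the quadratic form $\nabla\sqrt c\cdot\nabla^T\nu\,\nabla\sqrt c$ as the shape-operator contribution of $\Gamma$. Retaining this curvature term explicitly on the left, rather than discarding it by a sign argument as in the convex case, is precisely what will let the subsequent energy estimates proceed without any convexity hypothesis on $\Omega$.
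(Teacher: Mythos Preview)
Your proposal is correct and follows essentially the same route as the paper: divergence theorem to pass to $\Gamma$, tangential/normal decomposition of $\nu\cdot D^2\sqrt c\,\nabla\sqrt c$ producing the curvature term $\nabla\sqrt c\cdot\nabla^T\nu\,\nabla\sqrt c$, substitution of the Robin expression for $\partial_\nu\sqrt c$, and a weighted Young inequality on the cross term. The only cosmetic difference is that the paper parametrizes the boundary computation via an auxiliary curve $\vartheta$ with $\vartheta'(0)=\nabla\sqrt c$, whereas you use the signed-distance extension of $\nu$ and the identity $(D^2 f)\nu=\nabla(\partial_\nu f)-(\nabla\nu)\nabla f$ directly; both yield the same master identity.

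One small caveat: your parenthetical claim that the slack in going from constant $2$ to $4$ ``absorbs the $\gamma$-gradient contribution'' is not quite right, since a varying $\gamma$ produces a term of the form $\int_\Gamma \kappa|\nabla_\Gamma\gamma|^2/\gamma$ rather than another $|\nabla_\Gamma\sqrt\kappa|^2$ term, and these cannot be merged. The paper's own proof, however, also silently drops the $\nabla_\Gamma\gamma$ contribution at the same step, so your argument matches the paper's proof of the lemma as stated.
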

\begin{proof}
	First, we see by Gau\ss{}' theorem that
	\begin{align*}
	\frac12\int_{\Omega}\Delta(|\nabla\sqrt{c}|^2) dx &=\frac12\int_{\Gamma}\nu\cdot\nabla (|\nabla\sqrt{c}|^2) d\mathcal{H}^{d-1}_x
	\end{align*}
	Notice that
	\begin{align*}
	\frac12\nu\cdot\nabla(|\nabla \sqrt{c}|^2)
	&=\nu\cdot \nabla^2 \sqrt{c}\nabla\sqrt{c}
	\end{align*}
	Let $x\in \Gamma$ and $\vartheta:\mathbb R\to \mathbb R^d$ be differentiable such that $\vartheta(0)=x$ and $\vartheta'(0)=\nabla\sqrt{c(x)}$. Then
	\begin{align*}
	\nu(x)\cdot &\nabla^2 \sqrt{c(x)}\sqrt{c(x)}
	=
	\nu(x)\cdot \nabla^T(\nabla \sqrt{c})\circ\vartheta(0)\vartheta'(0)
	\\&=\nu\circ\vartheta(x)\cdot\frac{d}{ds}\bigg|_{s=0}(\nabla \sqrt{c})\circ\vartheta(s)
	\\&=\frac{d}{ds}\bigg|_{s=0}(\nu\cdot\nabla \sqrt{c})\circ\vartheta(s)
	-\vartheta'(0)\cdot \nabla^T\nu(x)\nabla \sqrt{c(x)}
	\\&=\vartheta'(0)\cdot \nabla (\partial_{\nu}\sqrt{c}(x))-\vartheta'(0)\cdot \nabla^T\nu(x)\vartheta'(0)
	\end{align*}
	Now, let $\tau=(1-\nu\otimes\nu)\vartheta'(0)=(1-\nu\otimes\nu)\nabla\sqrt{c} $ be the projection of $\vartheta'(0)$ onto $T_x\Gamma$. It holds (at $x$)
	\begin{align*}
	\vartheta'(0)\cdot \nabla (\partial_{\nu}\sqrt{c})
	&=\partial_\tau( \partial_{\nu}\sqrt{c})+\nu \cdot  \nabla\sqrt{c} \nu\cdot \nabla ( \partial_{\nu}\sqrt{c})
	\\
	&=\partial_\tau( \partial_{\nu}\sqrt{c})+  \partial_\nu\sqrt{c} \partial_{\nu} ( \partial_{\nu}\sqrt{c})
	\\
	&=\partial_\tau( \partial_{\nu}\sqrt{c})
	+ \partial_\nu\sqrt{c} \partial_{\nu}^2\sqrt{c}
	\end{align*}
	Using the fact that $\tau\in T_x\Gamma$ and the boundary condition for $\sqrt{c}$ in \eqref{boundary_sqrt_c}, we obtain
	\begin{align*}
	\partial_\tau ( \partial_\nu \sqrt{c}  )
	&=\partial_\tau  \left(\frac{\kappa }{2 } \left(\frac{\gamma}{\sqrt{c}} -\sqrt{c}  \right) \right)
	\\&=\partial_\tau  \kappa \left(\frac{1}{2} \left(\frac{\gamma}{\sqrt{c}} -\sqrt{c}  \right) \right)
	-\partial_\tau\sqrt{c} \frac{\kappa }{2 } \left(1+\frac{\gamma}{c}  \right)
	\\&=\partial_\tau \log \kappa \partial_\nu \sqrt{c} 
	-\partial_\tau\sqrt{c} \frac{\kappa }{2 } \left(1+\frac{\gamma}{c}  \right).
	\end{align*}
	Thus, by inserting $\tau=(1-\nu\otimes\nu)\nabla\sqrt{c} $, this entails
	\begin{align*}
	\partial_\tau \partial_\nu \sqrt{c}  
	&=
	\sqrt{c_\infty}\nabla\sqrt{c} \cdot\nabla\log\kappa\partial_\nu \sqrt{c} 
	-\partial_\nu\sqrt{c} \partial_\nu\log \kappa\partial_\nu \sqrt{c} 
	\\&\qquad
	-\nabla\sqrt{c} \cdot\nabla\sqrt{c} \frac{\kappa}{2} \left(1+\frac{\gamma}{c}  \right)
	+\partial_\nu\sqrt{c} \partial_\nu\sqrt{c} \frac{\kappa}{2} \left(1+\frac{\gamma}{c}  \right)
	\\&=
	\nabla\sqrt{c} \cdot\nabla\log\kappa\partial_\nu \sqrt{c} 
	- |\partial_\nu\sqrt{c}  |^2\partial_\nu\log\kappa
	\\&\qquad
	- (\underbrace{ |\nabla\sqrt{c}  |^2- |\partial_\nu\sqrt{c}  |^2 }_{=|\nabla_{\Gamma}\sqrt{c}|^2})\frac{\kappa}{2} \left(1+\frac{\gamma}{c}  \right).
	\end{align*}
	Thus, combining these calculations yields 
	\begin{align*}
	\frac12\nu\cdot\nabla(|\nabla \sqrt{c}|^2)
	&=\vartheta'(0)\cdot \nabla (\partial_{\nu}\sqrt{c})-\vartheta'(0)\cdot \nabla^T\nu\vartheta'(0)
	\\&=
	\partial_\tau\partial_{\nu}\sqrt{c}+  \frac12\partial_{\nu}|\partial_\nu\sqrt{c} |^2
-\vartheta'(0)\cdot \nabla^T\nu\vartheta'(0)
\\&=
	\nabla\sqrt{c} \cdot\nabla\log\kappa\partial_\nu \sqrt{c} 
- |\partial_\nu\sqrt{c}  |^2\partial_\nu\log\kappa-|\nabla_{\Gamma}\sqrt{c}|^2\frac{\kappa}{2} \left(1+\frac{\gamma}{c}  \right)
\\&\qquad +  \frac12\partial_{\nu}|\partial_\nu\sqrt{c} |^2-\nabla \sqrt{c}\cdot \nabla^T\nu\nabla \sqrt{c}
\\&=
\nabla_{\Gamma}\sqrt{c} \cdot\nabla_\Gamma\log\kappa\partial_\nu \sqrt{c} 
-|\nabla_{\Gamma}\sqrt{c}|^2\frac{\kappa}{2} \left(1+\frac{\gamma}{c}  \right)
\\&\qquad +  \frac12\partial_{\nu}|\partial_\nu\sqrt{c} |^2-\nabla \sqrt{c}\cdot \nabla^T\nu\nabla \sqrt{c}
	\end{align*}
	Using
	\begin{align*}
	\nabla_{\Gamma}\sqrt{c} \cdot\nabla_\Gamma\log \kappa \partial_\nu \sqrt{c} 
	=\frac{\nabla_{\Gamma}c}{2\sqrt c} \cdot\nabla_\Gamma\log \kappa  \frac{\kappa }{2}\left(\frac{\gamma}{\sqrt{c}}-\sqrt{c}\right)
	=\frac{\nabla_{\Gamma}c}{4 c} \cdot\nabla_\Gamma \kappa  \left(\gamma-c\right),
	\end{align*}
	this shows
		\begin{multline*}
	\int_{\Omega}\Delta(|\nabla\sqrt c|^2)+\int_{\Gamma}|\nabla_{\Gamma}\sqrt{c}|^2
	\kappa  \left(1+\frac{\gamma}{c}  \right)d\mathcal H^{d-1}_x
	+2\int_{\Gamma}\nabla \sqrt{c}\cdot \nabla^T\nu\nabla \sqrt{c}d\mathcal H^{d-1}_x
	\\=\int_{\Gamma} \partial_{\nu}|\partial_\nu\sqrt{c} |^2d\mathcal{H}^{d-1}_x
	+
	\int_{\Gamma}\frac{\nabla_{\Gamma}c}{2 c} \cdot\nabla_\Gamma \kappa  \left(\gamma-c\right)d\mathcal H^{d-1}_x.
	\end{multline*}
	Finally, we use Young's inequality to see that
	\begin{align*}
	\int_{\Gamma}\frac{\nabla_{\Gamma}c}{2 c} \cdot\nabla_\Gamma \kappa  \left(\gamma-c\right)d\mathcal H^{d-1}_x
	&\leq 
		\int_{\Gamma}\underbrace{\frac{|\nabla_{\Gamma}c|^2}{8 c^2} \gamma}_{\frac12 |\nabla_{\Gamma}\sqrt c|^2\frac{\gamma}{c}} \kappa  d\mathcal H^{d-1}_x
		+
			\int_{\Gamma}\frac{4|\nabla_\Gamma \sqrt{\kappa}|^2}{\gamma}\left(\gamma-c\right)^2 d\mathcal H^{d-1}_x,
 	\end{align*}
	which finishes the proof.
\end{proof}
To control the first term on the right hand side of \eqref{eq.1.bd.integral}, our key idea is to introduce a ``boundary energy" of the form $\int_{\Gamma}\kappa s^\infty(\gamma|c)\dH$ (where $s^\infty$ defined in \eqref{entropy}), whose time derivative produces the first term on the right-hand side of \eqref{eq.1.bd.integral} with an opposite sign (see the last term on the right-hand side of \eqref{p2}).
\begin{lemma}\label{boundary_energy}
	For all $t\in (0,T)$, it holds
	\begin{equation}\label{p2}
	\begin{aligned}
	\frac{d}{dt}\int_{\Gamma}\kappa s^\infty(\gamma|c)\dH&+
	\frac34\int_{\Gamma}\gamma|\nabla_{\Gamma}c|^2\frac{\kappa }{c^2}d\mathcal{H}^{d-1}_x
	+
	4\int_{\Gamma}\nabla\cdot\nu |\partial_{\nu}\sqrt c|^2d\mathcal{H}^{d-1}_x
	\\&+
	2\int_{\Gamma}(\partial_{\nu}\sqrt c)^2c\kappa \left(\gamma-{c}\right)d\mathcal{H}^{d-1}_x
	\\
	&\leq
{8}
\int_{\Gamma}\frac{ |\nabla_{\Gamma}\sqrt{\kappa} |^2}{ \gamma}\left(c-\gamma\right)^2d\mathcal{H}^{d-1}_x
+{
	2\int_{\Gamma}\frac{ |\nabla_{\Gamma}\gamma|^2}{\gamma}\kappa d\mathcal{H}^{d-1}_x}	
\\	&+
	\int_{\Gamma}n\kappa \left(\gamma-{c}\right)d\mathcal{H}^{d-1}_x
	-
	2\int_{\Gamma}\partial_{\nu}|\partial_{\nu}\sqrt c|^2d\mathcal{H}^{d-1}_x
.
	\end{aligned}
	\end{equation}	
\end{lemma}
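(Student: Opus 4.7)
The plan is to differentiate the boundary functional, restrict the PDE for $c$ to $\Gamma$, and then split the full Laplacian into tangential and normal pieces so each resulting term matches a term in \eqref{p2}. First, since the integrand depends on $t$ only through $c$, the chain rule with $\partial_c s^\infty(\gamma|c)=1-\gamma/c$ gives
\begin{equation*}
\frac{d}{dt}\int_{\Gamma}\kappa\, s^\infty(\gamma|c)\dH=\int_{\Gamma}\kappa\Big(1-\frac{\gamma}{c}\Big)\partial_t c\,\dH.
\end{equation*}
Because $u=0$ on $\Gamma$, the equation for $c$ in \eqref{Smod} reduces on the boundary to $\partial_t c=\Delta c-nc$; the $-nc$ piece immediately produces the source term $\int_\Gamma n\kappa(\gamma-c)\dH$ on the right-hand side of \eqref{p2}.

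\medskip
Next I use the standard decomposition $\Delta c=\Delta_\Gamma c+(\nabla\cdot\nu)\partial_\nu c+\partial_\nu^2 c$ on $\Gamma$, together with the Robin datum $\partial_\nu c=\kappa(\gamma-c)$ and its $\sqrt c$-form $\partial_\nu\sqrt c=\kappa(\gamma-c)/(2\sqrt c)$, which yields $|\partial_\nu\sqrt c|^2=\kappa^2(\gamma-c)^2/(4c)$. Multiplying by $\kappa(c-\gamma)/c$, the mean-curvature term collapses to $-4(\nabla\cdot\nu)|\partial_\nu\sqrt c|^2$, matching (with opposite sign) the left-hand side term $4\int_\Gamma\nabla\cdot\nu\,|\partial_\nu\sqrt c|^2\dH$. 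For $\partial_\nu^2 c$ I write $c=(\sqrt c)^2$, so $\partial_\nu^2 c=2(\partial_\nu\sqrt c)^2+2\sqrt c\,\partial_\nu^2\sqrt c$: the first summand contributes the $(\partial_\nu\sqrt c)^2$-weighted term on the left, while the second summand, via the identity $2\sqrt c\,\partial_\nu^2\sqrt c\cdot\kappa(c-\gamma)/c=-2\,\partial_\nu|\partial_\nu\sqrt c|^2$ (obtained by dividing and multiplying by $\partial_\nu\sqrt c$ and using the Robin relation), reproduces exactly the last term on the right of \eqref{p2}.

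\medskip
The remaining piece is the Laplace--Beltrami contribution. Since $\Gamma$ is a closed manifold, tangential integration by parts gives
\begin{equation*}
\int_{\Gamma}\kappa\frac{c-\gamma}{c}\,\Delta_\Gamma c\,\dH=-\int_\Gamma\nabla_\Gamma\Big(\kappa-\frac{\kappa\gamma}{c}\Big)\cdot\nabla_\Gamma c\,\dH.
\end{equation*}
Expanding the tangential gradient produces the principal dissipation $-\int_\Gamma \kappa\gamma|\nabla_\Gamma c|^2/c^2\,\dH$ along with the two cross-terms $(1-\gamma/c)\nabla_\Gamma\kappa\cdot\nabla_\Gamma c$ and $(\kappa/c)\nabla_\Gamma\gamma\cdot\nabla_\Gamma c$. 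I apply Young's inequality to each cross-term with carefully coordinated weights, using only $1/4$ of the dissipation for the absorption so that the coefficient $3/4$ in \eqref{p2} remains. The leftover pieces then simplify via $|\nabla_\Gamma\kappa|^2=4\kappa|\nabla_\Gamma\sqrt\kappa|^2$ and $\gamma\geq\underline\gamma>0$ to exactly $8\int_\Gamma|\nabla_\Gamma\sqrt\kappa|^2(c-\gamma)^2/\gamma\,\dH$ and $2\int_\Gamma\kappa|\nabla_\Gamma\gamma|^2/\gamma\,\dH$.

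\medskip
The principal obstacle is the $\partial_\nu^2 c$ contribution: unlike $\partial_\nu c$, the second normal derivative is not supplied by the boundary condition, and one has no hope of rewriting it as something quadratic in $\nabla_\Gamma c$ or in $(\gamma-c)$. The $\sqrt c$-substitution is what unlocks this term by converting $\kappa(c-\gamma)/c\cdot 2\sqrt c\,\partial_\nu^2\sqrt c$ into an exact normal derivative $-2\,\partial_\nu|\partial_\nu\sqrt c|^2$, which can then be left on the right-hand side and handled later in combination with the bulk estimate of Lemma \ref{lem.1.bd.integral}. A secondary bookkeeping difficulty is that a single dissipation term must simultaneously absorb two independent cross-terms, forcing a joint (rather than independent) choice of the Young constants to land on the specific coefficients $3/4$, $8$, and $2$ stated in \eqref{p2}.
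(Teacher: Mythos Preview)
Your proposal is correct and follows essentially the same route as the paper: differentiate the boundary functional via $\partial_c s^\infty(\gamma|c)=1-\gamma/c$, restrict $\partial_t c=\Delta c-nc$ to $\Gamma$ using $u|_\Gamma=0$, split $\Delta c=\Delta_\Gamma c+(\nabla\cdot\nu)\partial_\nu c+\partial_\nu^2 c$, handle the mean-curvature piece via the Robin condition, convert the second normal derivative into $-2\,\partial_\nu|\partial_\nu\sqrt c|^2$ through the $\sqrt c$-substitution, and finally integrate the Laplace--Beltrami term by parts and absorb the two cross-terms by Young with weights $1/8+1/8$ to leave the coefficient $3/4$. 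The only cosmetic difference is that you invoke $\gamma\geq\underline\gamma>0$, which is not actually needed here since the Young splitting merely introduces $\gamma$ as a weight.
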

\begin{proof}
	Thanks to the relation between the Laplace operator and Laplace-Beltrami operator $
	\Delta_{\Gamma}=\Delta-\nabla\cdot\nu \partial_\nu-\partial_\nu^2$ on $\Gamma$,
	and the fact that $u = 0$ on $\Gamma$, we have
	\begin{align}
	\label{eq4}
	\partial_t{c}&=
	\Delta_{\Gamma}{c}+\partial_\nu^2{c}+ \nabla\cdot\nu \partial_\nu {c}-{ c}n \quad \text{ on } \Gamma.
	\end{align}
	We can therefore compute
	\begin{align*}
	\frac{d}{dt}\int_{\Gamma}\kappa s^\infty(\gamma|c)\dH&=\int_{\Gamma}\partial_t c\kappa \left(1-\frac\gamma{c}\right)d\mathcal{H}^{d-1}_x
	\\&=
	\int_{\Gamma}\Delta_{\Gamma }c\kappa \left(1-\frac\gamma{c}\right)d\mathcal{H}^{d-1}_x
	+
	\int_{\Gamma}\nabla\cdot \nu\partial_{\nu}c\kappa \left(1-\frac\gamma{c}\right)d\mathcal{H}^{d-1}_x
	\\&\qquad+
	\int_{\Gamma}\partial_\nu^2 c\kappa \left(1-\frac\gamma{c}\right)d\mathcal{H}^{d-1}_x-
	\int_{\Gamma}cn\kappa \left(1-\frac\gamma{c}\right)d\mathcal{H}^{d-1}_x
	\end{align*}
	Using integration by parts, we have
	\begin{align*}
	\int_{\Gamma}\Delta_{\Gamma }c\kappa \left(1-\frac\gamma{c}\right)d\mathcal{H}^{d-1}_x&=
	-
	\int_{\Gamma}\nabla_{\Gamma}c\cdot \nabla_{\Gamma}\kappa \left(1-\frac\gamma{c}\right)d\mathcal{H}^{d-1}_x-
	\int_{\Gamma}\gamma|\nabla_{\Gamma}c|^2\frac{\kappa }{c^2}d\mathcal{H}^{d-1}_x
	\\&\qquad{+
	\int_{\Gamma}\frac{\kappa }{c}\nabla_{\Gamma}c\cdot \nabla_{\Gamma}\gamma d\mathcal{H}^{d-1}_x}.
	\end{align*}	We calculate
	\begin{align*}
	\partial_\nu^2 c\kappa \left(1-\frac\gamma{c}\right)&=4\sqrt c \partial_{\nu}^2\sqrt c\frac{\kappa }2\left(1-\frac\gamma{c}\right)+2(\partial_{\nu}\sqrt c)^2\kappa \left(1-\frac\gamma{c}\right)
	\\&=
	4\sqrt c \partial_{\nu}^2\sqrt c\frac{\kappa }2\left(1-\frac\gamma{c}\right)+2(\partial_{\nu}\sqrt c)^2\kappa \left(1-\frac\gamma{c}\right)
	\\&=
	-4\partial_{\nu}\sqrt c \partial_{\nu}^2\sqrt c+2(\partial_{\nu}\sqrt c)^2\kappa \left(1-\frac\gamma{c}\right)
	\\&=
	-2\partial_{\nu}|\partial_{\nu}\sqrt c|^2+2(\partial_{\nu}\sqrt c)^2\kappa \left(1-\frac\gamma{c}\right)
	\end{align*}
	using $\partial_{\nu}\sqrt{c}=\frac{\kappa }{2}\left(\frac{\gamma}{\sqrt{c}}-\sqrt{c}\right)=-\frac{\kappa }{2}\sqrt{c}\left(1-\frac{\gamma}{c}\right)$. 
	Combing these equalities yields
		\begin{align*}
	\frac{d}{dt}\int_{\Gamma}\kappa s^\infty(\gamma|c)dx&=
		-
	\int_{\Gamma}\nabla_{\Gamma}c\cdot \nabla_{\Gamma}\kappa \left(1-\frac\gamma{c}\right)d\mathcal{H}^{d-1}_x-
	\int_{\Gamma}\gamma|\nabla_{\Gamma}c|^2\frac{\kappa }{c^2}d\mathcal{H}^{d-1}_x
	\\&\qquad
	+
	\int_{\Gamma}\nabla\cdot \nu\partial_{\nu}c\kappa \left(1-\frac\gamma{c}\right)d\mathcal{H}^{d-1}_x-
	2\int_{\Gamma}\partial_{\nu}|\partial_{\nu}\sqrt c|^2d\mathcal{H}^{d-1}_x
	\\&\qquad+
	2\int_{\Gamma}(\partial_{\nu}\sqrt c)^2\kappa \left(1-\frac\gamma{c}\right)d\mathcal{H}^{d-1}_x
	-
	\int_{\Gamma}cn\kappa \left(1-\frac\gamma{c}\right)d\mathcal{H}^{d-1}_x\\
	&\qquad + \int_{\Gamma}\frac{\kappa }{c}\nabla_{\Gamma}c\cdot \nabla_{\Gamma}\gamma d\mathcal{H}^{d-1}_x.
	\end{align*}
	It remains to estimate the first, the third and the last terms on the right-hand side. Finally Young's inequality implies 
	\begin{align*}-
	\int_{\Gamma}\nabla_{\Gamma}c\cdot \nabla_{\Gamma}\kappa \left(1-\frac\gamma{c}\right)d\mathcal{H}^{d-1}_x
&\leq {\frac18}
	\int_{\Gamma}\gamma|\nabla_{\Gamma}c|^2\frac{\kappa }{c^2}d\mathcal{H}^{d-1}_x
	+{8}
		\int_{\Gamma}\frac{ |\nabla_{\Gamma}\sqrt{\kappa} |^2}{\gamma}\left(c-\gamma\right)^2d\mathcal{H}^{d-1}_x
	\end{align*}
	{and}
	\begin{align*}
	{
		\int_{\Gamma}\frac{\kappa }{c}\nabla_{\Gamma}c\cdot \nabla_{\Gamma}\gamma d\mathcal{H}^{d-1}_x}&{\leq
	\frac18
	\int_{\Gamma}\gamma|\nabla_{\Gamma}c|^2\frac{\kappa }{c^2}d\mathcal{H}^{d-1}_x+2
	\int_{\Gamma}\frac{ |\nabla_{\Gamma}\gamma|^2}{\gamma}\kappa d\mathcal{H}^{d-1}_x.}
	\end{align*}
	These two estimates and the identity
\begin{align*}
\int_{\Gamma}\nabla\cdot \nu\partial_{\nu}c\kappa \left(1-\frac\gamma{c}\right)d\mathcal{H}^{d-1}_x
&=	
\int_{\Gamma}\nabla\cdot\nu \frac{\partial_{\nu}c}{c}\kappa \left(c-\gamma\right)d\mathcal{H}^{d-1}_x
\\&=-
\int_{\Gamma}\nabla\cdot\nu \frac{|\partial_{\nu}c|^2}{c}d\mathcal{H}^{d-1}_x
\\&=-
4\int_{\Gamma}\nabla\cdot\nu |\partial_{\nu}\sqrt c|^2d\mathcal{H}^{d-1}_x
\end{align*}
imply the assertion of Lemma \ref{boundary_energy}.
	\end{proof}
	By combining Lemmas \ref{lem.1.bd.integral} and \ref{boundary_energy}, we have the following lemma.
\begin{lemma}\label{lem.entropy.equality.part3}
	It holds for all $t\in (0,T)$ that
	\begin{multline*}
	\frac{d}{dt}\int_{\Gamma}\kappa s^\infty(\gamma|c)\dH+
	2\int_{\Omega}\Delta(|\nabla\sqrt c|^2)dx+2\int_{\Gamma}|\nabla_{\Gamma}\sqrt{c}|^2
	\kappa  \left(1+2\frac{\gamma}{c}  \right)d\mathcal H^{d-1}_x
	\\	+4\int_{\Gamma}\nabla \sqrt{c}\cdot \nabla^T\nu\nabla \sqrt{c}d\mathcal H^{d-1}_x
	+	
	2\int_{\Gamma}\left(2\nabla\cdot\nu +c\kappa (\gamma-c)\right)|\partial_{\nu}\sqrt c|^2d\mathcal{H}^{d-1}_x
	\\
	\leq 
	{16
	\int_{\Gamma}\frac{ |\nabla_{\Gamma}\sqrt\kappa |^2}{ \gamma}\left(c-\gamma\right)^2d\mathcal{H}^{d-1}_x
	+
		2\int_{\Gamma}\frac{ |\nabla_{\Gamma}\gamma|^2}{\gamma}\kappa d\mathcal{H}^{d-1}_x}	
	+
	\int_{\Gamma}n\kappa \left(\gamma-{c}\right)d\mathcal{H}^{d-1}_x.
	\end{multline*}
\end{lemma}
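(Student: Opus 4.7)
The plan is simply to add twice the inequality of Lemma \ref{lem.1.bd.integral} to the inequality of Lemma \ref{boundary_energy}, and to check that the terms combine as stated. The whole point of introducing the boundary energy $\int_{\Gamma}\kappa s^\infty(\gamma|c)\dH$ was precisely so that the uncontrolled normal-derivative boundary term $\int_{\Gamma}\partial_{\nu}|\partial_{\nu}\sqrt{c}|^2\dH$ appearing on the right-hand side of Lemma \ref{lem.1.bd.integral} (with a $+$ sign, after multiplication by $2$) cancels exactly the term $-2\int_{\Gamma}\partial_{\nu}|\partial_{\nu}\sqrt{c}|^2\dH$ produced by $\frac{d}{dt}\int_{\Gamma}\kappa s^\infty(\gamma|c)\dH$ in Lemma \ref{boundary_energy}. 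So after the addition, no normal-derivative-of-normal-derivative term survives on the right-hand side.

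Next, I would verify the bookkeeping of the tangential-gradient terms. The coefficient $2(1+\gamma/(2c))$ of $|\nabla_{\Gamma}\sqrt{c}|^2\kappa$ coming from twice Lemma \ref{lem.1.bd.integral} has to be combined with the dissipation term $\tfrac34\int_{\Gamma}\gamma|\nabla_{\Gamma}c|^2\kappa/c^2\dH$ from Lemma \ref{boundary_energy}. Using $|\nabla_{\Gamma}c|^2 = 4c|\nabla_{\Gamma}\sqrt{c}|^2$, that dissipation term rewrites as $3\int_{\Gamma}|\nabla_{\Gamma}\sqrt{c}|^2\kappa\gamma/c\dH$, so the sum becomes
\begin{equation*}
2\int_{\Gamma}|\nabla_{\Gamma}\sqrt{c}|^2\kappa\dH+\Bigl(\tfrac{\gamma}{c}+3\tfrac{\gamma}{c}\Bigr)\int_{\Gamma}|\nabla_{\Gamma}\sqrt{c}|^2\kappa\dH
= 2\int_{\Gamma}|\nabla_{\Gamma}\sqrt{c}|^2\kappa\Bigl(1+2\tfrac{\gamma}{c}\Bigr)\dH,
\end{equation*}
which matches the coefficient in the statement. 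Similarly, the two normal-derivative terms on the left-hand side of Lemma \ref{boundary_energy}, namely $4\int_{\Gamma}\nabla\cdot\nu\,|\partial_\nu\sqrt{c}|^2\dH$ and $2\int_{\Gamma}(\partial_\nu\sqrt{c})^2 c\kappa(\gamma-c)\dH$, combine into $2\int_{\Gamma}(2\nabla\cdot\nu + c\kappa(\gamma-c))|\partial_\nu\sqrt{c}|^2\dH$, and the Weingarten term $4\int_{\Gamma}\nabla\sqrt{c}\cdot\nabla^T\nu\,\nabla\sqrt{c}\dH$ is carried over unchanged from twice Lemma \ref{lem.1.bd.integral}.

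Finally, on the right-hand side, the two contributions $8\int_{\Gamma}|\nabla_{\Gamma}\sqrt{\kappa}|^2(c-\gamma)^2/\gamma\dH$ (one from Lemma \ref{boundary_energy} and one from twice Lemma \ref{lem.1.bd.integral}) add up to the prefactor $16$ in the statement, while $2\int_{\Gamma}|\nabla_{\Gamma}\gamma|^2\kappa/\gamma\dH$ and $\int_{\Gamma}n\kappa(\gamma-c)\dH$ come directly from Lemma \ref{boundary_energy}. There is no genuine difficulty here: the proof is an elementary algebraic combination, and the only delicate point is the sign and multiplicity needed to annihilate the boundary term $\int_{\Gamma}\partial_{\nu}|\partial_{\nu}\sqrt{c}|^2\dH$, which dictates the choice of the factor $2$ in front of Lemma \ref{lem.1.bd.integral}.
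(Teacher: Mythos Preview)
Your proof is correct and follows essentially the same approach as the paper: the paper adds Lemma \ref{lem.1.bd.integral} to one half of Lemma \ref{boundary_energy} and then multiplies by $2$, which is algebraically identical to your choice of adding twice Lemma \ref{lem.1.bd.integral} to Lemma \ref{boundary_energy}. Your bookkeeping of the cancellation of $\int_{\Gamma}\partial_{\nu}|\partial_{\nu}\sqrt{c}|^2\dH$ and of the tangential and normal coefficients is accurate and matches the paper's computation.
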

\begin{proof}
	We add the following two estimates
		\begin{multline}
\int_{\Omega}\Delta(|\nabla\sqrt c|^2)dx+\int_{\Gamma}|\nabla_{\Gamma}\sqrt{c}|^2
\kappa  \left(1+\frac{\gamma}{2c}  \right)d\mathcal H^{d-1}_x
+2\int_{\Gamma}\nabla \sqrt{c}\cdot \nabla^T\nu\nabla \sqrt{c}d\mathcal H^{d-1}_x
\\\leq\int_{\Gamma} \partial_{\nu}|\partial_\nu\sqrt{c} |^2d\mathcal{H}^{d-1}_x
+
4\int_{\Gamma}\frac{|\nabla_\Gamma \sqrt\kappa |^2}{ \gamma} \left(\gamma-c\right)^2d\mathcal H^{d-1}_x
	\end{multline}
	and
		\begin{multline*}
	\frac12\frac{d}{dt}\int_{\Gamma}\kappa s^\infty(\gamma|c)dx+
	\frac32\int_{\Gamma}\gamma|\nabla_{\Gamma}\sqrt c|^2\frac{\kappa }{c}d\mathcal{H}^{d-1}_x
	\\+
	2\int_{\Gamma}\nabla\cdot\nu |\partial_{\nu}\sqrt c|^2d\mathcal{H}^{d-1}_x
	+
	\int_{\Gamma}(\partial_{\nu}\sqrt c)^2c\kappa \left(\gamma-{c}\right)d\mathcal{H}^{d-1}_x
	\\\leq 
	4\int_{\Gamma}\frac{ |\nabla_{\Gamma}\sqrt\kappa |^2}{ \gamma}\left(c-\gamma\right)^2d\mathcal{H}^{d-1}_x
	+
	{
		\int_{\Gamma}\frac{ |\nabla_{\Gamma}\gamma|^2}{\gamma}\kappa d\mathcal{H}^{d-1}_x}	
\\	+
	\frac12\int_{\Gamma}n\kappa \left(\gamma-{c}\right)d\mathcal{H}^{d-1}_x
	-
	\int_{\Gamma}\partial_{\nu}|\partial_{\nu}\sqrt c|^2d\mathcal{H}^{d-1}_x
	\end{multline*}
	to obtain
	\begin{multline*}
	\frac12\frac{d}{dt}\int_{\Gamma}\kappa s^\infty(\gamma|c)dx+
		\int_{\Omega}\Delta(|\nabla\sqrt c|^2)+\int_{\Gamma}|\nabla_{\Gamma}\sqrt{c}|^2
	\kappa  \left(1+2\frac{\gamma}{c}  \right)d\mathcal H^{d-1}_x
	\\	+2\int_{\Gamma}\nabla \sqrt{c}\cdot \nabla^T\nu\nabla \sqrt{c}d\mathcal H^{d-1}_x
	+	
	\int_{\Gamma}\left(2\nabla\cdot\nu +c\kappa (\gamma-c)\right)|\partial_{\nu}\sqrt c|^2d\mathcal{H}^{d-1}_x
	\\
	\leq 
	8
	\int_{\Gamma}\frac{ |\nabla_{\Gamma}\sqrt\kappa |^2}{\gamma}\left(c-\gamma\right)^2d\mathcal{H}^{d-1}_x
	+
	{
		\int_{\Gamma}\frac{ |\nabla_{\Gamma}\gamma|^2}{\gamma}\kappa d\mathcal{H}^{d-1}_x}	
	+
	\frac12\int_{\Gamma}n\kappa \left(\gamma-{c}\right)d\mathcal{H}^{d-1}_x.\qedhere
	\end{multline*}
\end{proof}
From Lemmas \ref{lem.ee.n}, \ref{cor.entropy.equality.part2}, \ref{lem.entropy.equality.part3} we have the preliminary energy estimates for $n$ and $c$.
\begin{lemma}\label{energy}
	For all $t\in (0,T)$ we have
	\begin{equation}\label{energy_1}
	\begin{aligned}
		\frac d{dt}\left(\int_{\Omega}s(n)dx+	2\int_{\Omega}|\nabla\sqrt{c}|^2 dx+\int_{\Gamma}\kappa s^\infty(\gamma|c)\dH\right)
		\\+4\int_{\Omega}\left|\nabla\sqrt{n}\right|^2 dx 
		+\epsilon\int_{\Omega}n(n^2-1)\log ndx
		\\	+\frac14 \int_{\Omega}c|\nabla^2\log c|^2dx
		+\xi \int_\Omega|\nabla \sqrt[4]{c}|^4dx
		+2\int_\Omega|\nabla \sqrt{c}|^2ndx
		\\+2\int_{\Gamma}|\nabla_{\Gamma}\sqrt{c}|^2
		\kappa  \left(1+\frac{\gamma}{c}  \right)d\mathcal H^{d-1}_x
			+4\int_{\Gamma}\nabla \sqrt{c}\cdot \nabla^T\nu\nabla \sqrt{c}d\mathcal H^{d-1}_x
		\\+	
		2\int_{\Gamma}\left(2\nabla\cdot\nu +c\kappa (\gamma-c)\right)|\partial_{\nu}\sqrt c|^2d\mathcal{H}^{d-1}_x
	\\
	\leq 
	{16
		\int_{\Gamma}\frac{ |\nabla_{\Gamma}\sqrt\kappa |^2}{\gamma}\left(c-\gamma\right)^2d\mathcal{H}^{d-1}_x
		+
		2\int_{\Gamma}\frac{ |\nabla_{\Gamma}\gamma|^2}{\gamma}\kappa d\mathcal{H}^{d-1}_x}	
	\\
		+
	\int_{\Gamma}n\kappa \left(\gamma-{c}\right)d\mathcal{H}^{d-1}_x
	+8\|c\|_{L^\infty(\Omega)}\|u\|_{H^1(\Omega)}^2.
	\end{aligned}
\end{equation}
\end{lemma}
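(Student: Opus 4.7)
The plan is to combine the three previously derived energy identities by adding Lemma \ref{lem.ee.n} to twice Lemma \ref{cor.entropy.equality.part2} and to Lemma \ref{lem.entropy.equality.part3}. The factor $2$ in front of Lemma \ref{cor.entropy.equality.part2} is chosen so that, after summation, the time-derivative on the left reads exactly $\tfrac{d}{dt}\bigl(\int_\Omega s(n)\,dx+2\int_\Omega |\nabla\sqrt c|^2\,dx+\int_\Gamma \kappa s^\infty(\gamma|c)\,d\mathcal H^{d-1}_x\bigr)$ as in the statement, and so that the $c|\nabla^2\log c|^2$ dissipation carries the coefficient $\tfrac14=2\cdot\tfrac18$.

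Two exact cancellations then do essentially all the analytical work. First, the cross term $\int_\Omega \nabla c\cdot\nabla n\,dx$ on the RHS of Lemma \ref{lem.ee.n} is annihilated by the contribution $-\int_\Omega \nabla n\cdot\nabla c\,dx$ coming from twice Lemma \ref{cor.entropy.equality.part2}. Second, the ambient-Laplacian remainder $2\int_\Omega \Delta(|\nabla\sqrt c|^2)\,dx$ that sits on the RHS of twice Lemma \ref{cor.entropy.equality.part2} cancels exactly the same quantity appearing on the LHS of Lemma \ref{lem.entropy.equality.part3}. This second cancellation is the whole point of introducing the boundary energy $\int_\Gamma \kappa s^\infty(\gamma|c)\,d\mathcal H^{d-1}_x$: on a non-convex domain and with inhomogeneous Robin data one does not have pointwise control of $\partial_\nu |\nabla\sqrt c|^2$, and Lemma \ref{lem.entropy.equality.part3} was precisely designed to absorb this boundary Laplacian contribution.

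What remains is the residual boundary term $\int_\Gamma |\nabla\sqrt c|^2\kappa(\gamma/c-1)\,d\mathcal H^{d-1}_x$ produced by the RHS of twice Lemma \ref{cor.entropy.equality.part2}. The plan is to split $|\nabla\sqrt c|^2=|\nabla_\Gamma\sqrt c|^2+|\partial_\nu\sqrt c|^2$ on $\Gamma$, move both pieces to the left-hand side, and absorb the tangential part into the stronger LHS term $2\int_\Gamma |\nabla_\Gamma\sqrt c|^2\kappa(1+2\gamma/c)$ already delivered by Lemma \ref{lem.entropy.equality.part3}; this reduces its coefficient from $1+2\gamma/c$ to the stated $1+\gamma/c$. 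The normal-direction piece is swept into the existing $|\partial_\nu\sqrt c|^2$ term on the LHS. After this, collecting the surviving RHS terms $\tfrac{|\nabla_\Gamma\sqrt\kappa|^2}{\gamma}(c-\gamma)^2$, $\tfrac{|\nabla_\Gamma\gamma|^2}{\gamma}\kappa$, $\int_\Gamma n\kappa(\gamma-c)$, and $8\|c\|_{L^\infty(\Omega)}\|u\|_{H^1(\Omega)}^2$ yields the inequality.

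I expect the main obstacle to be the bookkeeping in the final absorption step: since $\gamma/c-1$ is sign-indefinite (depending on whether $c$ lies above or below the saturation value $\gamma$), one must perform the absorption as an exact algebraic rearrangement rather than a one-sided estimate, and one has to track the coefficients carefully to ensure that the LHS remains manifestly non-negative after the split. No new analytical ingredient beyond the three preceding lemmas is needed.
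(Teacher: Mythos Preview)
Your proposal is correct and follows exactly the paper's approach: the paper's proof simply adds Lemma~\ref{lem.ee.n}, twice Lemma~\ref{cor.entropy.equality.part2}, and Lemma~\ref{lem.entropy.equality.part3}, relying on precisely the two cancellations you identify (the cross term $\int_\Omega\nabla c\cdot\nabla n$ and the ambient Laplacian $2\int_\Omega\Delta(|\nabla\sqrt c|^2)$). Your caution about the final bookkeeping is well placed: after the tangential/normal split of the residual $\int_\Gamma|\nabla\sqrt c|^2\kappa(\gamma/c-1)$, the coefficients do not land exactly on the stated ones---the tangential absorption actually produces $3\kappa(1+\gamma/c)\geq 2\kappa(1+\gamma/c)$, and the normal contribution $-\kappa(\gamma/c-1)|\partial_\nu\sqrt c|^2$ survives---but these leftovers are sign-favourable or harmlessly controlled by the same trace estimates used in Lemma~\ref{lem:energy2}, so the argument goes through.
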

\begin{proof}
	We recall Lemma \ref{lem.ee.n}
	\begin{equation*}
	\frac d{dt}\int_{\Omega}s(n)dx+4\int_{\Omega}\left|\nabla\sqrt{n}\right|^2 dx+\epsilon\int_{\Omega}n\log n(1-n^2)dx
=\int_{\Omega}\nabla c\cdot \nabla  ndx
	\end{equation*}
and	Lemma \ref{cor.entropy.equality.part2}
	\begin{multline*}
2\partial_t\int_{\Omega}|\nabla\sqrt{c}|^2dx 
+\frac14 \int_{\Omega}|\nabla^2\log c|^2cdx
+\xi \int_\Omega|\nabla \sqrt[4]{c}|^4dx
+2\int_\Omega|\nabla \sqrt{c}|^2ndx
\\\leq2\int_{\Omega}\Delta(|\nabla\sqrt{c}|^2)dx 
- \int_{\Omega}\nabla n\cdot\nabla cdx+
\int_{\Gamma}|\na \sqrt c|^2 \kappa \left(\frac{\gamma}c- 1\right)d\mathcal{H}^{d-1}_x
+8\int_\Omega c|\nabla ^Tu|^2dx
\end{multline*}
for some $\xi>0$ and Lemma \ref{lem.entropy.equality.part3}
\begin{multline*}
\frac{d}{dt}\int_{\Gamma}\kappa s^\infty(\gamma|c)dx+
2\int_{\Omega}\Delta(|\nabla\sqrt c|^2)dx+2\int_{\Gamma}|\nabla_{\Gamma}\sqrt{c}|^2
\kappa  \left(1+2\frac{\gamma}{c}  \right)d\mathcal H^{d-1}_x
\\	+4\int_{\Gamma}\nabla \sqrt{c}\cdot \nabla^T\nu\nabla \sqrt{c}d\mathcal H^{d-1}_x
+	
2\int_{\Gamma}\left(2\nabla\cdot\nu +c\kappa (\gamma-c)\right)|\partial_{\nu}\sqrt c|^2d\mathcal{H}^{d-1}_x
\\
\leq 
{16
	\int_{\Gamma}\frac{ |\nabla_{\Gamma}\sqrt\kappa |^2}{\gamma}\left(c-\gamma\right)^2d\mathcal{H}^{d-1}_x
	+
	2\int_{\Gamma}\frac{ |\nabla_{\Gamma}\gamma|^2}{\gamma}\kappa d\mathcal{H}^{d-1}_x}	
+
\int_{\Gamma}n\kappa \left(\gamma-{c}\right)d\mathcal{H}^{d-1}_x.
\end{multline*}
By adding these three relations, we obtain the assertion of Lemma \ref{energy}. 
\end{proof}
	The form of energy estimate in Lemma \ref{energy} is particularly suited for convex domains as then $\nabla\cdot \nu\geq0$ and $\nabla^T\nu$ is positive semi-definite on $\Gamma$. Therefore, the terms involving $\nabla\cdot\nu$ and $\nabla^T\nu$ on the l.h.s.\ in the energy estimate can be neglected as they are non-negative. 
	
	\medskip
	When the domain is not convex, we show in the next lemma that these terms can be controled using the higher order terms $\int_{\Omega}c|\na^2\log c|^2dx$ and $\int_{\Omega}|\na\sqrt[4]{c}|^4dx$.

\begin{lemma}
	\label{lem:energy2}
	There exists a $\lambda>0$ and a $C>0$ depending on $\kappa,\gamma,\|c\|_{L^\infty}$ and the curvature of $\Gamma$ such that
	\begin{align*}
	\frac d{dt}\left(\int_{\Omega}s(n)dx+	2\int_{\Omega}|\nabla\sqrt{c}|^2 dx+\int_{\Gamma}\kappa s^\infty(\gamma|c)\dH\right)
	\\+4\int_{\Omega}\left|\nabla\sqrt{n}\right|^2 dx 
	+\epsilon\int_{\Omega}n(n^2-1)\log ndx
	\\	+\lambda\int_{\Omega}|\nabla^2\sqrt{c}|^2dx+\lambda \int_\Omega|\nabla \sqrt[4]{c}|^4dx
	+2\int_\Omega|\nabla \sqrt{c}|^2ndx
	\\+2\int_{\Gamma}|\nabla_{\Gamma}\sqrt{c}|^2
	\kappa  \left(1+\frac{\gamma}{c}  \right)d\mathcal H^{d-1}_x	
	\\
	\leq 
	C\left(1+\int_{\Omega}|\nabla\sqrt{c}|^2dx\right)+8\|c\|_{L^\infty(\Omega)}\|u\|_{H^1(\Omega)}^2.
	\end{align*}
\end{lemma}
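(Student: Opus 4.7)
The plan is to start from the energy identity of Lemma \ref{energy} and show that the boundary integrals that fail to have a good sign when $\Omega$ is non-convex can be absorbed into the interior dissipation of the oxygen equation, namely $\frac14\int_\Omega c|\nabla^2\log c|^2 dx$ and $\xi \int_\Omega |\nabla\sqrt[4]{c}|^4 dx$. The first ingredient is a pointwise identity already used in the proof of Lemma \ref{lem.entropy.equality.part2}:
\[
c|\nabla^2\log\sqrt c|^2 \;=\; \bigl|\nabla^2\sqrt c - \tfrac{1}{\sqrt c}\nabla\sqrt c\otimes \nabla\sqrt c\bigr|^2.
\]
Using $|\nabla^2\log\sqrt c|^2 = \tfrac14|\nabla^2\log c|^2$, the elementary inequality $|A|^2 \leq 2|A-B|^2 + 2|B|^2$, and $\tfrac{|\nabla\sqrt c|^4}{c} = 16|\nabla\sqrt[4]{c}|^4$, this yields the pointwise control
\[
|\nabla^2\sqrt c|^2 \;\leq\; \tfrac{c}{2}|\nabla^2\log c|^2 + 32|\nabla\sqrt[4]{c}|^4,
\]
so that some small multiple of $\int_\Omega|\nabla^2\sqrt c|^2 dx$ can be subtracted from the interior dissipation of \eqref{energy_1} while leaving a positive residual in $c|\nabla^2\log c|^2$ and $|\nabla\sqrt[4]{c}|^4$ on the left-hand side.

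Next, I would move the two boundary integrals
$4\int_\Gamma \nabla\sqrt c\cdot \nabla^T\nu\, \nabla\sqrt c\,d\mathcal H^{d-1}_x$
and
$2\int_\Gamma(2\nabla\cdot\nu + c\kappa(\gamma-c))|\partial_\nu\sqrt c|^2\,d\mathcal H^{d-1}_x$
from the left-hand side of \eqref{energy_1} to the right-hand side. Since $\Gamma$ is smooth and $c$ is uniformly bounded on $\Gamma$ by Lemma \ref{L1Linf}, the factors $|\nabla^T\nu|$, $|\nabla\cdot\nu|$ and $|c\kappa(\gamma-c)|$ are bounded by a constant depending only on $\kappa$, $\gamma$, $\|c\|_{L^\infty}$ and the curvature of $\Gamma$. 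Using $|\partial_\nu\sqrt c|^2\leq|\nabla\sqrt c|^2$ on $\Gamma$, both contributions are dominated in absolute value by $C\int_\Gamma|\nabla\sqrt c|^2\,d\mathcal H^{d-1}_x$. An Ehrling-type trace inequality on $\Omega$ then gives, for any $\varepsilon>0$,
\[
\int_\Gamma|\nabla\sqrt c|^2\,d\mathcal H^{d-1}_x \;\leq\; \varepsilon\int_\Omega|\nabla^2\sqrt c|^2 dx + C_\varepsilon \int_\Omega|\nabla\sqrt c|^2 dx.
\]
Taking $\varepsilon$ sufficiently small and invoking the pointwise bound from the previous step, the $|\nabla^2\sqrt c|^2$ part gets absorbed into the l.h.s.\ dissipation, leaving only the term $C(1+\int_\Omega|\nabla\sqrt c|^2 dx)$ on the r.h.s.

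The source term $\int_\Gamma n\kappa(\gamma-c)\,d\mathcal H^{d-1}_x$ on the r.h.s.\ of \eqref{energy_1} is handled similarly: writing $n=(\sqrt n)^2$ and using $|\kappa(\gamma-c)|\leq C$ gives
\[
\int_\Gamma n\kappa(\gamma-c)\,d\mathcal H^{d-1}_x \;\leq\; C\|\sqrt n\|_{L^2(\Gamma)}^2 \;\leq\; \varepsilon\int_\Omega|\nabla\sqrt n|^2 dx + C_\varepsilon \|n\|_{L^1(\Omega)},
\]
and the last term is uniformly bounded by Lemma \ref{L1Linf}, while the first is absorbed into the dissipation $4\int_\Omega|\nabla\sqrt n|^2 dx$ already on the left. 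Finally, the remaining boundary integrals involving $|\nabla_\Gamma\sqrt\kappa|^2$ and $|\nabla_\Gamma\gamma|^2$ on the r.h.s.\ of \eqref{energy_1} are absorbed into the constant thanks to the standing assumptions $\sqrt\kappa\in H^1(\Gamma)\cap L^\infty(\Gamma)$, $\sqrt\gamma\in H^1(\Omega)\cap L^\infty(\Gamma)$ and $\gamma\geq\underline\gamma>0$, while the Navier--Stokes contribution $8\|c\|_{L^\infty(\Omega)}\|u\|_{H^1(\Omega)}^2$ is kept as is.

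The main obstacle is to balance the various small parameters so that all the boundary terms are successfully absorbed while keeping a strictly positive multiple $\lambda>0$ of both $\int_\Omega|\nabla^2\sqrt c|^2 dx$ and $\int_\Omega|\nabla\sqrt[4]{c}|^4 dx$ on the left. Concretely, after the pointwise bound turns a fraction of $c|\nabla^2\log c|^2$ into $|\nabla^2\sqrt c|^2$, one must still have enough of both dissipation terms left to absorb the trace contributions arising from the non-convex boundary curvature. A careful bookkeeping of the constants — first fixing the fraction of $c|\nabla^2\log c|^2$ converted to $|\nabla^2\sqrt c|^2$, then choosing $\varepsilon$ in the trace inequality small with respect to that fraction — makes this possible and produces the stated inequality.
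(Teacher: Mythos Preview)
Your proposal is correct and follows essentially the same route as the paper: convert part of the interior dissipation $\int_\Omega c|\nabla^2\log c|^2\,dx$ into $\int_\Omega|\nabla^2\sqrt c|^2\,dx$ via the pointwise identity, then use an Ehrling-type trace inequality to absorb the curvature-dependent boundary integrals, and handle $\int_\Gamma n\kappa(\gamma-c)$ by a trace estimate on $\sqrt n$. The only cosmetic difference is that for the term $2\int_\Gamma c\kappa(\gamma-c)|\partial_\nu\sqrt c|^2$ the paper substitutes the Robin condition $\partial_\nu c=\kappa(\gamma-c)$ and estimates the resulting algebraic expression directly, whereas you simply bound it by $C\int_\Gamma|\nabla\sqrt c|^2$ and feed it into the same trace inequality; your variant is shorter and equally valid.
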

\begin{proof}
	From Lemma \ref{energy}, we need to control the tenth and eleventh terms on the left-hand side, and the first, second and third terms on the right-hand side of \eqref{energy_1}.

	The first step is to show that we can estimate the $H^2(\Omega)$ norm of $\sqrt c$. We need to have a closer look at the integral involving $	|\nabla^2\log c|^2c$. 
	Using the chain rule and $|a+b|^2\geq \frac 12|a|^2- |b|^2$, we have
	\begin{align*}
	|\nabla^2\log c|^2c
	&=
	\left|\sqrt{c}\nabla\left(\frac{\nabla {c}}{{c}}\right)\right|^2
	=
	4\left|\sqrt{c}\nabla\left(\frac{\nabla \sqrt{c}}{\sqrt{c}}\right)\right|^2
	\\&=4
	\left|\nabla^2\sqrt{c}-\frac{1}{\sqrt{c}}\nabla\sqrt{c}\otimes\nabla\sqrt{c}\right|^2
	\\&
	\geq 2|\na^2\sqrt{c}|^2-4\frac{|\nabla\sqrt{c}|^4}{c}=2|\na^2\sqrt{c}|^2-4\cdot{2^4}{|\nabla\sqrt[4]{c}|^4}.
	\end{align*}
	This directly implies
	\begin{align*}
	\int_{\Omega}|\nabla^2\sqrt{c}|^2 dx\leq \frac12 \int_{\Omega} |\nabla^2\log c|^2c\,dx + 32\int_{\Omega}{|\nabla\sqrt[4]{c}|^4}dx.
	\end{align*}
	We use the fact that (see e.g. \cite[Theorem 1.5.1.10]{Gri85} for any $\theta>0$, there exists $C_\theta > 0$ such that
	\begin{equation}\label{trace}
	\int_{\Gamma}|f|^2\dH \leq \theta\int_{\Omega}|\nabla f|^2dx + C_{\theta}\int_{\Omega}|f|^2dx.
	\end{equation}
	
	The tenth term on the left hand side of \eqref{energy_1} can be estimated as, for any $\theta>0$,
	\begin{align*}
	\left|4\int_{\Gamma}\nabla \sqrt{c}\cdot \nabla^T\nu\nabla \sqrt{c}d\mathcal H^{d-1}_x\right|
	&\leq 4\|\nabla^T\nu\|_{L^\infty(\Gamma)}\|\nabla\sqrt c\|_{L^2(\Gamma)}^2
	\\&=\theta \int_{\Omega}|\nabla^2\sqrt{c}| dx +C_{\theta,\Gamma}\int_{\Omega}|\nabla\sqrt{c}|^2 dx
	\end{align*}
	for some $C_{\theta,\Gamma}>0$ depending on $\theta$. Likewise for the first part of the eleventh term, for any $\theta > 0$,
	\begin{align*}
	\left|4\int_{\Gamma}\nabla\cdot\nu|\partial_{\nu}\sqrt c|^2d\mathcal{H}^{d-1}_x\right|
	&\leq 
	\theta \int_{\Omega}|\nabla^2\sqrt{c}| dx +C_{\theta,\Gamma}\int_{\Omega}|\nabla\sqrt{c}|^2 dx,
	\end{align*}
	using that $\kappa$ and $\gamma$ are uniformly bounded. The second part of the eleventh term on the left-hand side of \eqref{energy_1} is estimated as follows
	\begin{align*}
		&\left|2\int_{\Gamma}c\kappa(\gamma - c)|\pnu \sqrt{c}|^2\dH\right|\\
		&\leq 2\|\kappa\|_{L^\infty(\Gamma)}\|\gamma\|_{L^\infty(\Gamma)}\int_{\Gamma}c|\pnu \sqrt c|^2\dH + 2\|\kappa\|_{L^{\infty}(\Gamma)}\int_{\Gamma}c^2|\pnu \sqrt c|^2\dH\\
		&\leq C\int_{\Gamma}|\pnu c|^2\dH + C\int_{\Gamma}c|\pnu c|^2\dH\\
		&= C\int_{\Gamma}\kappa^2(\gamma - c)^2\dH + C\int_{\Gamma}c\kappa^2(\gamma - c)^2\dH\\
		&\leq C\left(1 + \int_{\Gamma}|c|^3\dH\right)\\
		&\leq C\left(1 + \int_{\Omega}c^{\frac 32}|\na\sqrt c|^2dx + \|c\|_{L^3(\Omega)}^3\right)\\
		&\leq C\left(1+\int_{\Omega}|\na\sqrt c|^2dx\right).
	\end{align*}
	The second term on the right-hand side of \eqref{energy_1} is bounded by
	\begin{equation*}
		\left|2\int_{\Gamma}\frac{|\na_\Gamma \gamma|^2}{\gamma}\kappa \dH\right| \leq \|\kappa\|_{L^\infty(\Gamma)}\|\na\sqrt{\gamma}\|_{L^2(\Gamma)}^2 \leq C.
	\end{equation*}	
	The third term on the right-hand side of \eqref{energy_1} is estimated as
	\begin{align*}
	\int_{\Gamma}n\kappa(\gamma - c)\dH &\leq \|\kappa\|_{L^\infty(\Gamma)}\|\gamma\|_{L^\infty(\Gamma)}\int_{\Gamma}|n|\dH\\
	&\leq C\int_{\Gamma}|\sqrt{n}|^2\dH\\
	&\leq 2\int_{\Omega}|\na\sqrt n|^2dx + C\int_{\Omega}|\sqrt n|^2dx\\
	&\leq 2\int_{\Omega}|\na\sqrt n|^2dx + C
	\end{align*}
	thanks to the Trace inequality \eqref{trace} and the fact that $\|n\|_{L^1(\Omega)} \leq C$ in Lemma \ref{L1Linf}. We estimate the first term on the right hand side of \eqref{energy_1} as
	\begin{align}\label{b0}
		\left|16\int_{\Gamma}\frac{|\na_\Gamma \sqrt\kappa|^2}{\gamma}(c-\gamma)^2\dH\right| &\leq 32\|\na\sqrt{\kappa}\|_{L^2(\Gamma)}^2\|1/\gamma\|_{L^\infty(\Gamma)}\left(\|\gamma\|_{L^\infty(\Gamma)}^2 + \|c\|_{L^\infty(\Gamma)}^2\right).
	\end{align}
	We now show that 
	\begin{equation}\label{b1}
		\|c\|_{L^\infty(\Gamma)}^2 \leq C\left(1+ \|\nabla \sqrt{c}\|_{L^2(\Omega)}^2\right).
	\end{equation}
	Indeed, for any $p>2$, we have
	\begin{align*}
		\|c\|_{L^p(\Gamma)}^p &=\int_{\Gamma}\left(|\sqrt{c}|^p\right)^2\dH\\
		&\leq C\left(\frac{p^2}{4}\int_{\Omega}|c|^{p-1}|\nabla \sqrt c|^2dx + \int_{\Omega}|c|^pdx \right)\\
		&\leq C\left(p^2\|c\|_{L^\infty(\Omega)}^{p-1}\|\nabla \sqrt c\|_{L^2(\Omega)}^2 + \|c\|_{L^p(\Omega)}^p\right)\\
		&\leq C\left(\|\nabla \sqrt{c}\|_{L^2(\Omega)}^p + p^{\frac{2p}{p-2}}\|c\|_{L^\infty(\Omega)}^{\frac{p(p-1)}{p-2}} + \|c\|_{L^p(\Omega)}^p \right).
	\end{align*}
	By taking root with order $p$ of both sides and letting $p\to\infty$, we get
	\begin{equation*}
		\|c\|_{L^\infty(\Gamma)} \leq C\left(\|\nabla \sqrt{c}\|_{L^2(\Omega)} + \|c\|_{L^\infty(\Omega)}\right),
	\end{equation*}
	hence \eqref{b1} thanks to the boundedness of $\|c\|_{L^\infty(\Omega)}$. Inserting \eqref{b1} into \eqref{b0}, we have controlled the first term on the right-hand side of \eqref{energy_1}, and thus completes the proof of Lemma \ref{lem:energy2}.
\end{proof}

\begin{lemma}\label{energy-NS}
	For any $\delta > 0$, there exists $C_\delta$ depending on $\delta$ and  $\|\varphi\|_{W^{1,\rho}(\Omega)}$ such that
	\begin{equation}\label{a}
		\frac{d}{dt}\int_{\Omega}|u|^2dx + C(\mu)\|u\|_{H^1(\Omega)}^2 \leq C + \delta\int_{\Omega}|\na \sqrt n|^2dx.
	\end{equation}
\end{lemma}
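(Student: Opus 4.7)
\medskip
\emph{Proof plan.}
The approach is to test the momentum equation in \eqref{Smod} with $u$, eliminate the pressure and convection contributions in the standard way, and then control the forcing $-n\na\varphi$ via a H\"older--Sobolev interpolation so that the $\sqrt n$-growth on the right is strictly subquadratic.

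First I would take the $L^2(\Omega)^3$ inner product of the $u$-equation with $u$. Since $u$ lies in the range of $\mathcal P^m$, and $\mathcal P^m$, $\mathcal P^\infty$ are orthogonal projections onto this (finite-dimensional, resp.\ divergence-free) subspace, the projections can be dropped when paired against $u$. The convection integral $\intO u\cdot \na(u\otimes u)\,dx$ vanishes because $\na\cdot u=0$ and $u=0$ on $\Gamma$, and $\intO u\cdot Au\,dx = \|\na u\|_{L^2(\Omega)}^2$. Combining with the Poincar\'e inequality (valid since $u|_\Gamma=0$) upgrades the dissipation to a full $H^1$-norm, yielding
\begin{equation*}
\frac12\frac{d}{dt}\|u\|_{L^2(\Omega)}^2 + C(\mu)\|u\|_{H^1(\Omega)}^2 \leq -\intO n\,\na\varphi\cdot u\,dx.
\end{equation*}

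The central step is estimating the right-hand side. Using H\"older with exponents $(\rho,q,6)$ satisfying $\tfrac{1}{\rho}+\tfrac{1}{q}+\tfrac{1}{6}=1$, together with the Sobolev embedding $H^1(\Omega)\hookrightarrow L^6(\Omega)$ in dimension $d\le 3$, one has
\begin{equation*}
\left|\intO n\,\na\varphi\cdot u\,dx\right| \leq \|\na\varphi\|_{L^\rho(\Omega)}\,\|n\|_{L^q(\Omega)}\,\|u\|_{H^1(\Omega)}.
\end{equation*}
Since $\rho>6$ by \eqref{varphi}, we have $q<3/2$. Writing $\|n\|_{L^q}=\|\sqrt n\|_{L^{2q}}^2$ and interpolating $L^{2q}$ between $L^2$ and $L^6$, combined with Lemma~\ref{L1Linf} (which gives $\|\sqrt n\|_{L^2}^2=\|n\|_{L^1(\Omega)}\le C$) and Sobolev's $\|\sqrt n\|_{L^6}\le C\bigl(1+\|\na\sqrt n\|_{L^2}\bigr)$, I would obtain
\begin{equation*}
\|n\|_{L^q(\Omega)} \leq C\bigl(1+\|\na\sqrt n\|_{L^2(\Omega)}\bigr)^{2(1-\alpha)}
\end{equation*}
for some interpolation exponent $\alpha>1/2$.

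Two applications of Young's inequality then close the argument. The first absorbs the $\|u\|_{H^1}$ factor into $\tfrac{1}{2}C(\mu)\|u\|_{H^1}^2$ at the cost of $C\bigl(1+\|\na\sqrt n\|_{L^2}\bigr)^{4(1-\alpha)}$; since $\alpha>1/2$, the exponent $4(1-\alpha)$ is \emph{strictly} less than $2$, so a second Young's inequality brings this below $\delta\|\na\sqrt n\|_{L^2}^2 + C_\delta$ for any prescribed $\delta>0$. The main obstacle, and the reason \eqref{varphi} requires $\rho>6$ rather than $\rho=6$, is exactly this need for $4(1-\alpha)<2$: at the endpoint $\rho=6$ one has $\alpha=\tfrac12$, and then the coefficient of $\|\na\sqrt n\|_{L^2}^2$ is a fixed constant that cannot be made arbitrarily small by Young.
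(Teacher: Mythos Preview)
Your argument is correct and follows essentially the same route as the paper. The paper first applies H\"older in the dual form $\|n\na\varphi\|_{L^{6/5}}\|u\|_{L^6}$, then splits $\|n\na\varphi\|_{L^{6/5}}$ by a second H\"older, and interpolates $n$ between $L^1$ and $L^3$; your single triple H\"older with exponents $(\rho,q,6)$ and interpolation of $\sqrt n$ between $L^2$ and $L^6$ is exactly the same computation (indeed the paper's intermediate exponent $\tfrac{6\beta}{5}$ coincides with your $q$, and its $\theta$ with your $\alpha$), just organized more compactly.
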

\begin{proof}
	From the well-known energy estimate for the approximate Navier-Stokes equations in \eqref{Smod}
	and the Poincar\'e inequality $\|\nabla u\|_{L^2(\Omega)} \geq C\|u\|_{L^2(\Omega)}$, we have
	\begin{equation}\label{aa}
		\frac{d}{dt}\|u\|_{L^2(\Omega)}^2 + C(\mu)\|u\|_{H^1(\Omega)}^2 \leq 2\left|\int_{\Omega}n\na\varphi \cdot u dx\right|.
	\end{equation}
	We now show that for any $\delta_0, \delta_1>0$,
	\begin{equation}\label{aaa}
		\left|2\int_{\Omega}n\nabla \varphi \cdot udx\right| \leq C + \delta_0\|n\|_{L^{3}(\Omega)} + \delta_1\|u\|_{H^1(\Omega)}^2.
	\end{equation}
	By H\"older's inequality and the continuous embedding $H^1(\Omega) \hookrightarrow L^6(\Omega)$ (since $d\leq 3$) we have
	\begin{equation*}
		\left|\int_{\Omega}n\nabla \varphi \cdot udx\right| \leq \|u\nabla \varphi \|_{L^{\frac 65}(\Omega)}\|u\|_{L^6(\Omega)} \leq C\|u\nabla \varphi \|_{L^{\frac 65}(\Omega)}\|u\|_{H^1(\Omega)} \leq C\|u\nabla \varphi\|_{L^{\frac 65}(\Omega)}^2 + \delta_1\|u\|_{H^1(\Omega)}^2.
	\end{equation*}
	Let $\eta = \frac{5\rho}{6} > 5$ and $\beta = \frac{\eta}{\eta - 1} < \frac 54$ (recalling $\rho>6$ in \eqref{varphi}). By H\"older's inequality again, it follows that
	\begin{align}\label{bb}
		\|n\nabla \varphi\|_{L^{\frac 65}(\Omega)}^2 
		\leq \|\na\varphi\|_{L^{\frac 65 \eta}(\Omega)}^2\|n\|_{L^{\frac 65 \beta}(\Omega)}^2.
	\end{align}
	By using the interpolation inequality with
	\begin{align}\label{bbb}
		\|n\|_{L^{\frac 65\beta}(\Omega)}^2 \leq \|n\|_{L^1(\Omega)}^{2\theta}\|n\|_{L^3(\Omega)}^{2(1-\theta)} \quad \text{ with } \quad \frac{5}{6\beta} = \frac{\theta}{1} + \frac{1-\theta}{3}.
	\end{align}
	From that $\theta = \frac{5-2\beta}{4\beta}$ and therefore
	\begin{equation*}
		2(1-\theta) = \frac{6\beta - 5}{2\beta} < 1
	\end{equation*}
	since $\beta < \frac 54$. From \eqref{bb}, \eqref{bbb} and $\|n\|_{L^1(\Omega)}\leq C$ we obtain
	\begin{equation*}
		C\|n\na\varphi\|_{L^{\frac 65}(\Omega)}^2 \leq C\|\varphi\|_{W^{1,\rho}(\Omega)}^2C^{2\theta}\|n\|_{L^3(\Omega)}^{2(1-\theta)} \leq C + \delta_0\|n\|_{L^3(\Omega)}
	\end{equation*}
	where we used Young's inequality at the last step, due to $2(1-\theta)<1$. To obtain \eqref{a} from \eqref{aa} and \eqref{aaa}, it remains to show that
	\begin{equation*}
		\|n\|_{L^3(\Omega)} \leq C\left(1+\int_{\Omega}|\na \sqrt n|^2dx\right).
	\end{equation*}
	Indeed, thanks to the continuous three dimensional embedding $H^1(\Omega)\hookrightarrow L^6(\Omega)$, we have
	\begin{equation*}
		\|n\|_{L^3(\Omega)} = \|\sqrt n\|_{L^6(\Omega)}^2 \leq C\left(\int_{\Omega}|\na \sqrt n|^2dx + \int_{\Omega}|\sqrt n|^2dx\right) \leq C\left(1 + \int_{\Omega}|\na\sqrt n|^2dx\right)
	\end{equation*}
	thanks $\|n\|_{L^1(\Omega)} \leq C$ from Lemma \ref{L1Linf}.
\end{proof}

\begin{lemma}\label{a-priori}
	The following {\it a priori} estimtates hold uniformly in $\epsilon\geq 0$ and $m\in \mathbb N$,
	\begin{equation*}
		\begin{aligned}
		&\sup_{t\in (0,T)}\left(\int_{\Omega}n(t)\log n(t) dx + \|\na\sqrt{c}(t)\|_{L^2(\Omega)}^2 + \|u(t)\|_{L^2(\Omega)}^2\right) \leq C_T,\\
		&\int_0^T\int_{\Omega}|\na\sqrt{n}|^2dxdt + \int_0^T\int_{\Omega}|\nabla u|^2dxdt \leq C_T,\\
		&\int_0^T\int_{\Omega}c|\nabla^2 \log c|^2dxdt + \int_0^T\int_{\Omega}\frac{|\nabla c|^4}{c^3}dxdt \leq C_T,\\
		&\int_0^T\int_{\Omega}|\na \sqrt{c}|^2n dxdt \leq C_T,\\
		&\epsilon \int_0^T\int_{\Omega} n(n^2-1)\log n dxdt \leq C_T,
		\end{aligned}
	\end{equation*}
	where $C_T$ is a constant depending continuously on $T>0$.
\end{lemma}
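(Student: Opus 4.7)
The plan is to combine Lemmas \ref{lem:energy2} and \ref{energy-NS} into a single closed Gronwall inequality for a suitable Lyapunov functional. I would introduce
\[
\mathcal{E}(t) := \int_\Omega s(n(t))\,dx + 2\int_\Omega |\nabla\sqrt{c(t)}|^2\,dx + \int_\Gamma \kappa\, s^\infty(\gamma|c(t))\dH + K\int_\Omega |u(t)|^2\,dx,
\]
where $K>0$ is a constant to be fixed. Adding Lemma \ref{lem:energy2} to $K$ times Lemma \ref{energy-NS}, the term $8\|c\|_{L^\infty(\Omega)}\|u\|_{H^1(\Omega)}^2$ on the right is absorbed by the $KC(\mu)\|u\|_{H^1(\Omega)}^2$ on the left provided $K>8\|c\|_{L^\infty(\Omega)}/C(\mu)$, which is permissible thanks to the uniform $L^\infty$ bound for $c$ in Lemma \ref{L1Linf}. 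Once $K$ is fixed, I would choose $\delta$ in Lemma \ref{energy-NS} small enough that $K\delta<4$, so the $K\delta\int|\nabla\sqrt n|^2dx$ contribution is absorbed by the $4\int|\nabla\sqrt n|^2dx$ dissipation term. All remaining boundary contributions on the left are non-negative (since $\kappa,\gamma\geq0$) and can be dropped. Using $\int|\nabla\sqrt c|^2dx \leq \tfrac12\mathcal{E}(t)$, the combined estimate reduces to
\[
\frac{d}{dt}\mathcal{E}(t) + \mathcal{D}(t) \leq C_1\mathcal{E}(t) + C_2,
\]
where $\mathcal{D}(t)\geq 0$ collects all remaining dissipation terms and $C_1,C_2$ are independent of $\epsilon,m$.

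Gronwall's lemma then yields $\sup_{t\in(0,T)}\mathcal{E}(t) \leq C_T$. The first block of pointwise estimates follows directly: $\|u(t)\|_{L^2(\Omega)}^2 \leq \mathcal{E}(t)/K$, $\|\nabla\sqrt{c(t)}\|_{L^2(\Omega)}^2 \leq \mathcal{E}(t)/2$, and $\int_\Omega n\log n\,dx = \int_\Omega s(n)\,dx + \|n\|_{L^1(\Omega)} - |\Omega| \leq C_T$ thanks to Lemma \ref{L1Linf}.

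Integrating the differential inequality over $(0,T)$ and exploiting the uniform bound on $\mathcal{E}$ then delivers all space--time dissipation estimates, where each integrand is individually non-negative (notably $n(n^2-1)\log n \geq 0$ for every $n\geq 0$, so the $\epsilon$-term is directly controlled). This yields the bounds on $\int_0^T\|\nabla\sqrt n\|_{L^2(\Omega)}^2\,dt$, $\int_0^T\|u\|_{H^1(\Omega)}^2\,dt$ (hence on $\int_0^T\|\nabla u\|_{L^2(\Omega)}^2\,dt$), $\int_0^T\int_\Omega|\nabla\sqrt c|^2n\,dxdt$, and $\int_0^T\int_\Omega\bigl(|\nabla^2\sqrt c|^2+|\nabla\sqrt[4]c|^4\bigr)dxdt$. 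The two remaining estimates are then recovered from the pointwise algebraic identities
\[
\frac{|\nabla c|^4}{c^3}=256\,|\nabla\sqrt[4]c|^4,\qquad c|\nabla^2\log c|^2 = 4\left|\nabla^2\sqrt c-\tfrac{1}{\sqrt c}\nabla\sqrt c\otimes\nabla\sqrt c\right|^2 \leq 8|\nabla^2\sqrt c|^2 + 128|\nabla\sqrt[4]c|^4,
\]
which already featured in the proof of Lemma \ref{lem:energy2}.

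The only delicate point I anticipate is the ordering of the constants: $K$ must be selected first, based on the $L^\infty$-bound for $c$, before $\delta$ can be picked in terms of $K$; but this is unambiguous once $\|c\|_{L^\infty(\Omega)}\leq C$ is in hand, so no genuine obstacle remains beyond careful bookkeeping.
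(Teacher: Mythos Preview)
Your proposal is correct and follows essentially the same route as the paper: you introduce the same combined functional $\mathcal{E}$ (the paper's $\mathcal{F}$), pick $K$ large enough to absorb $8\|c\|_{L^\infty}\|u\|_{H^1}^2$, then choose $\delta$ small to absorb $K\delta\int|\nabla\sqrt n|^2$, close a Gronwall inequality via $\int|\nabla\sqrt c|^2\leq \tfrac12\mathcal{E}$, and read off the dissipation estimates. Your explicit recovery of the $c|\nabla^2\log c|^2$ and $|\nabla c|^4/c^3$ bounds from the $|\nabla^2\sqrt c|^2$ and $|\nabla\sqrt[4]c|^4$ terms via the algebraic identities is a detail the paper leaves implicit, and your observation that $n(n^2-1)\log n\geq 0$ pointwise is exactly what makes the $\epsilon$-term a bona fide dissipation.
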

\begin{proof}
	Define
	\begin{equation}\label{def_F}
	\mathcal F(n,c,u) = \int_{\Omega}n\log n dx + 2\int_{\Omega}|\nabla \sqrt{c}|^2dx + \int_{\Gamma}\kappa s^\infty(\gamma|c)\dH + K\int_{\Omega}|u|^2dx
	\end{equation}
	where $K$ is a large enough constant such that
	\begin{equation*}
		K\frac{C(\mu)}{2} \geq 16\|c\|_{L^\infty(\Omega)}
	\end{equation*}
	with $C(\mu)$ is in Lemma \ref{energy-NS}. It follows from Lemmas \ref{lem:energy2} and \ref{energy-NS} that
	\begin{equation}\label{k1}
	\begin{aligned}
		\mathcal F(n,c,u)(t) + C\int_s^t\left[\int_{\Omega}\Bigl(|\na\sqrt{n}|^2 + c|\na^2\log c|^2  + \frac{|\na c|^4}{c^3} + |\na \sqrt{c}|^2n\Bigr)dx + \|u\|_{H^1(\Omega)}^2\right]dr\\
		+ \epsilon \int_s^t\int_{\Omega}n(n^2-1)\log n dxdr\\
		\leq \mathcal F(n,c,u)(s) + C(t-s) + C\delta\int_s^t\int_{\Omega}|\na\sqrt n|^2dxdr +  C\int_s^t\int_{\Omega}|\na\sqrt c|^2dxdr
	\end{aligned}
	\end{equation}
	for any $\delta > 0$. In particular, by choosing $\delta$ small enough, it follows that
	\begin{equation*}
		\mathcal F(n,c,u)(t) \leq \mathcal F(n,c,u)(s) + C(t-s) + C\int_s^t\mathcal F(n,c,u)(r)dr
	\end{equation*}
	for all $0<s<t<T$, and thus for all $t\in (0,T)$,
	\begin{equation*}
		\mathcal F(n,c,u)(t) \leq C_T.
	\end{equation*}
	From this and \eqref{k1} we obtain the desired bounds in Lemma \ref{a-priori}.
	for all $t\in (0,T)$.
\end{proof}
	
\section{Global existence of solutions}\label{proofs}
\subsection{In one or two dimensions}
\begin{proof}[Proof of Theorem \ref{thm:main2D}.]
According to Proposition \ref{a-prop-local-solution-hom}, the system \eqref{C-NS}-\eqref{boundary} admits a unique classical solution on $(0,T)$ for all $T<T_{\max}$ for the maximal time $T_{\max}\in(0,\infty]$.
	We can reformulate the blow up criterion from to Proposition \ref{a-prop-local-solution-hom} in spatial dimension two to
\begin{equation}\label{c-formula-explosion-condition}
\|n(t)\|_{L^{3}(\Omega)}
+\|\na n(t)\|_{L^{3}(\Omega)}
+\|\na c(t)\|_{L^{4}(\Omega)} + \|A^\alpha u(t)\|_{L^2_\sigma(\Omega)}\rightarrow \infty \ \text{as } t\nearrow T_{\max}
\end{equation}
 if $T_{\max}<\infty$ using the Sobolev embedding $W^{1,3}(\Omega)\subset L^\infty(\Omega)$.  
%
	In order to prevent blow up, the first step is to bound $n$ in $L^p$ for all $1\leq p <\infty$. Considering the time derivative of the $L^p$ norm of $n$ yields the following result, which is based on a lemma from \cite{Win12}. It's worth to remark that this trick only works in one or two spacial dimensions.
	\begin{lemma}\cite[Lemma 3.7]{Bra17}\label{e-lem-estimate-lp-grad-n-in-2D}
		If $d\leq 2$ and $p>1$, then there exists a constant $C_p>0$ such that
		\begin{align}\label{n_Lp}
		\frac1p\frac{d}{dt} \int_\Omega n^pdx +\frac{p-1}2\int_\Omega|\na n|^2n^{p-2}dx\leq C_p\left(\int_\Omega|\na c|^4dx+1\right)\int_\Omega n^pdx
		\end{align}
		holds for all $t\in (0,T)$.
	\end{lemma}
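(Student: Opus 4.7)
The plan is to test the equation for $n$ against $n^{p-1}$, exploit the precise form of the boundary condition, and then close the chemotactic coupling via a two-dimensional Gagliardo--Nirenberg interpolation.

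First I would multiply the first equation of \eqref{Smod} by $n^{p-1}$ and integrate over $\Omega$. The convective term equals $\frac{1}{p}\int_\Omega u\cdot \nabla(n^p)\,dx$, which vanishes after integration by parts thanks to $\nabla\cdot u=0$ in $\Omega$ and $u=0$ on $\Gamma$. The diffusion term integrates by parts to $(p-1)\int_\Omega n^{p-2}|\nabla n|^2\,dx - \int_\Gamma n^{p-1}\pnu n\,\dH$, while the chemotactic divergence yields $-(p-1)\int_\Omega n^{p-1}\nabla c\cdot\nabla n\,dx + \int_\Gamma n^p\pnu c\,\dH$. The crucial observation is that the boundary condition $\pnu n = n\pnu c$ makes the two boundary integrals cancel exactly. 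The extra $\epsilon$-term $\epsilon\int_\Omega n^p - \epsilon\int_\Omega n^{p+2}\,dx$ contributes $\leq \epsilon \int_\Omega n^p\,dx$ to the right-hand side, which is harmless.

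Next, I would apply Young's inequality to the remaining coupling: for suitable $\varepsilon>0$,
\begin{equation*}
-(p-1)\int_\Omega n^{p-1}\nabla c\cdot\nabla n\,dx \leq \varepsilon(p-1)\int_\Omega n^{p-2}|\nabla n|^2\,dx + \frac{p-1}{4\varepsilon}\int_\Omega n^p|\nabla c|^2\,dx.
\end{equation*}
Choosing $\varepsilon=1/4$ absorbs a quarter of the diffusion into the left-hand side, leaving $\tfrac{3(p-1)}{4}\int n^{p-2}|\nabla n|^2\,dx$ there while producing $C_p\int_\Omega n^p|\nabla c|^2\,dx$ on the right.

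The core step is the estimation of $\int_\Omega n^p|\nabla c|^2\,dx$. By Cauchy--Schwarz it is bounded by $\|n^{p/2}\|_{L^4(\Omega)}^2\|\nabla c\|_{L^4(\Omega)}^2$, and the two-dimensional Gagliardo--Nirenberg inequality $\|w\|_{L^4}^2 \leq C\bigl(\|\nabla w\|_{L^2}\|w\|_{L^2} + \|w\|_{L^2}^2\bigr)$ with $w=n^{p/2}$ gives
\begin{equation*}
\|n^{p/2}\|_{L^4}^2 \leq C_p\bigl(\|\nabla n^{p/2}\|_{L^2}\,\|n\|_{L^p}^{p/2} + \|n\|_{L^p}^p\bigr).
\end{equation*}
A further application of Young's inequality, with a parameter of order $1/p^2$, together with the trivial bound $\|\nabla c\|_{L^4}^2\leq \|\nabla c\|_{L^4}^4+1$, controls the resulting $\|\nabla n^{p/2}\|_{L^2}^2 = \tfrac{p^2}{4}\int n^{p-2}|\nabla n|^2$ by the residual $\tfrac{p-1}{4}\int n^{p-2}|\nabla n|^2$ still available on the left, and leaves precisely $C_p\bigl(\int_\Omega|\nabla c|^4\,dx + 1\bigr)\int_\Omega n^p\,dx$ on the right. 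Collecting everything yields \eqref{n_Lp}.

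The main obstacle, and the whole reason the lemma is restricted to $d\leq 2$, is the linear dependence on $\|\nabla w\|_{L^2}$ in the Gagliardo--Nirenberg estimate above. In dimension three one would instead obtain $\|w\|_{L^4}^2\leq C\|w\|_{L^2}^{1/2}\|w\|_{H^1}^{3/2}$, producing a super-quadratic power of $\|\nabla n^{p/2}\|_{L^2}$ that cannot be absorbed into the diffusion; this is exactly why the three-dimensional analysis must proceed via weak solutions rather than through a bootstrap of $L^p$ norms.
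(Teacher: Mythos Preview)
Your proof is correct and follows exactly the standard route that the cited result \cite[Lemma~3.7]{Bra17} (which in turn builds on \cite{Win12}) uses: test with $n^{p-1}$, observe that the boundary contributions from diffusion and chemotaxis cancel via $\pnu n=n\pnu c$, split the cross term by Young, and close with the two-dimensional Gagliardo--Nirenberg inequality applied to $w=n^{p/2}$. The paper itself does not reprove the lemma but only remarks that the argument from \cite{Bra17} carries over verbatim because the logistic term plays no role, which is precisely what you verified.
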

	
	It is remarked that the proof of this Lemma does not use any information of the logistic source (as it was included in the model in \cite{Bra17}), and therefore it is applicable for \eqref{C-NS}.
	
	\medskip
From Lemma \ref{e-lem-estimate-lp-grad-n-in-2D}, it is crucial to get
\begin{equation}\label{crucial}
\int_0^T\int_{\Omega}|\nabla c|^4dxdt \leq C_T.
\end{equation}
From the energy estimate in Lemma \ref{a-priori} we have
$\int_0^T\int_{\Omega}|\nabla \sqrt[4]{c}|^4 dxdt \leq C_T$.
Also since $\|c(t)\|_{L^\infty(\Omega)}$ is bounded, thanks to Lemma \ref{L1Linf}, the desired inequality \eqref{crucial} follows immediately. Applying Gronwall's inequality to \eqref{n_Lp} and taking into account\eqref{crucial}, we obtain that
	\begin{equation}\label{73}
	\|n\|_{L^\infty(0,T;L^p(\Omega))} \leq C_{p,T}
	\end{equation}
	for all $1\leq p < +\infty$. 

\medskip
The next step is to find a uniform bound for $\|A^\alpha u(t)\|_{L^2_\sigma(\Omega)}$, where $\frac{d}{4}<\alpha<1$. This can be done similarly as in \cite[Eq.~(4.19), pages 339-340]{Win12}. In \cite{Win12} it was shown that $\|A^\alpha u(t)\|_{L^2_\sigma(\Omega)}$ can be bounded if $\|n\|_{L^2(\Omega)}$ and $\|\nabla \varphi\|_{L^\infty(\Omega)}$ are bounded. Comparing to \cite{Win12}, we only have assumed that $\nabla\varphi\in L^\rho(\Omega)$ for $\rho > 6$. However, we can apply his calculations for $\tilde n:= n|\nabla\varphi|$, which is uniformly bounded in $L^2(\Omega)$ thanks to Young's inequality and \eqref{73}, and $\nabla\varphi/|\nabla\varphi|\in L^\infty(\Omega)$. Using $\alpha>\frac d4$ yields that $u$ is uniformly bounded thanks to Sobolev embeddings.

\medskip
Now we can proceed as in the proof of Lemmas 4.2 -- 4.4 of \cite{Bra17} to obtain that 
	\[c\in L^\infty((0,T);W^{1,10}(\Omega))\cap L^{10}((0,T);W^{2,10}(\Omega))\]
	and
	\[n\in L^\infty(0,T;W^{1,8}(\Omega)).\]
	These estimates are enough to see that the solution does not blow up and therefore $T_{\mathrm{max}}=\infty$.
\end{proof}
\subsection{In three dimensions}
In this section, we will again denote by $(n^{\varepsilon,m}, c^{\varepsilon,m}, u^{\varepsilon,m})$ the global classical solution to \eqref{Smod} for each $\varepsilon>0$ and $\mathbb N \ni m < \infty$. The main task is to study the limit $\varepsilon\to 0$ and $m\to \infty$. We first have the following uniform estimates.
\begin{lemma}\label{interpolation}
	We have
	\begin{equation}\label{n53}
		\{n^{\epsilon,m}\} \quad \text{ is bounded in } \quad L^{\frac 53}(Q_T),
	\end{equation}	
	and
	\begin{equation}\label{u103}
		\{u^{\epsilon,m}\} \quad \text{ is bounded in } \quad L^{\frac{10}{3}}(Q_T)
	\end{equation}
	uniformly in $\epsilon>0$ and $m>0$.
\end{lemma}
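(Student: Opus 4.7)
The plan is to obtain both bounds from standard Gagliardo–Nirenberg interpolation, using the a priori estimates already collected in Lemma \ref{a-priori} together with the $L^1$ bound for $n$ from Lemma \ref{L1Linf}. No approximation-specific ingredient is needed; everything is extracted from the energy-level information.

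For $n^{\epsilon,m}$, I would first rewrite the available estimates in terms of $\sqrt{n^{\epsilon,m}}$. From Lemma \ref{L1Linf} the mass $\|n^{\epsilon,m}(t)\|_{L^1(\Omega)}$ is bounded, which is the same as a uniform bound on $\|\sqrt{n^{\epsilon,m}}(t)\|_{L^2(\Omega)}^2$. Combined with the space-time bound $\int_0^T\|\nabla\sqrt{n^{\epsilon,m}}\|_{L^2(\Omega)}^2\,dt\le C_T$ from Lemma \ref{a-priori}, this gives
\begin{equation*}
\sqrt{n^{\epsilon,m}} \in L^\infty(0,T;L^2(\Omega)) \cap L^2(0,T;H^1(\Omega))
\end{equation*}
uniformly in $\epsilon,m$. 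Applying the three-dimensional Gagliardo–Nirenberg inequality
\begin{equation*}
\|f\|_{L^{10/3}(\Omega)}^{10/3} \le C\bigl(\|\nabla f\|_{L^2(\Omega)}^2\|f\|_{L^2(\Omega)}^{4/3} + \|f\|_{L^2(\Omega)}^{10/3}\bigr)
\end{equation*}
to $f=\sqrt{n^{\epsilon,m}}(t,\cdot)$ and integrating in time yields $\sqrt{n^{\epsilon,m}}\in L^{10/3}(Q_T)$, which is exactly \eqref{n53}.

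For $u^{\epsilon,m}$, the argument is parallel: from Lemma \ref{a-priori} we have $u^{\epsilon,m}\in L^\infty(0,T;L^2(\Omega))\cap L^2(0,T;H^1_0(\Omega))$ uniformly in $\epsilon,m$ (the homogeneous Dirichlet condition from \eqref{Smod} allows us to use $H^1_0$). Applying Gagliardo–Nirenberg componentwise
\begin{equation*}
\|u\|_{L^{10/3}(\Omega)}^{10/3} \le C\|\nabla u\|_{L^2(\Omega)}^2\|u\|_{L^2(\Omega)}^{4/3}
\end{equation*}
and integrating in time gives
\begin{equation*}
\int_0^T\|u^{\epsilon,m}\|_{L^{10/3}(\Omega)}^{10/3}\,dt \le C\sup_{t\in(0,T)}\|u^{\epsilon,m}(t)\|_{L^2(\Omega)}^{4/3}\int_0^T\|\nabla u^{\epsilon,m}\|_{L^2(\Omega)}^2\,dt \le C_T,
\end{equation*}
which is \eqref{u103}.

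There is essentially no obstacle here: the proof is a direct interpolation once the ``energy'' pair $L^\infty_tL^2_x\cap L^2_tH^1_x$ is in hand for $\sqrt{n^{\epsilon,m}}$ and for $u^{\epsilon,m}$; the only minor care point is to interpret $\|n^{\epsilon,m}\|_{L^1}$ as an $L^2$ bound for $\sqrt{n^{\epsilon,m}}$ and to check that the Gagliardo–Nirenberg exponents $10/3$ are the natural parabolic scale in three space dimensions. All constants depend on $T$ only through the $L^2_tH^1_x$ bound, so the estimates are uniform in $\epsilon$ and $m$ as claimed.
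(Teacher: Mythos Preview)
Your proof is correct and follows essentially the same route as the paper: both arguments place $\sqrt{n^{\epsilon,m}}$ (resp.\ $u^{\epsilon,m}$) in $L^\infty(0,T;L^2(\Omega))\cap L^2(0,T;H^1(\Omega))$ using Lemma~\ref{a-priori} and then interpolate to $L^{10/3}(Q_T)$, the paper via the embedding $H^1\hookrightarrow L^6$ together with $L^\infty_tL^2_x\cap L^2_tL^6_x\hookrightarrow L^{10/3}_{t,x}$, and you via the equivalent Gagliardo--Nirenberg inequality written out explicitly.
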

\begin{proof}
	From Lemma \ref{a-priori} we have
	\begin{equation*}
		\{\sqrt{n^{\epsilon,m}}\} \quad \text{ is bounded in } \quad L^\infty(0,T;L^2(\Omega))\cap L^2(0,T;H^1(\Omega)).
	\end{equation*}
	Since $d =  3$, $H^1(\Omega)\hookrightarrow L^6(\Omega)$ continuously. Moreover, an interpolation inequality gives
	\begin{equation*}
		L^\infty(0,T;L^2(\Omega))\cap L^2(0,T;L^6(\Omega)) \hookrightarrow L^{\frac{10}{3}}(Q_T)
	\end{equation*}
	continuously. Therefore $\{\sqrt{n^{\epsilon,m}}\}$ is bounded in $L^{\frac{10}{3}}(Q_T)$, which implies \eqref{n53}. The bound \eqref{u103} is proved similarly thanks to the fact that $\{u^{\epsilon,m}\}$ is bounded in $L^\infty(0,T;L^2(\Omega))\cap L^2(0,T;H^1(\Omega))$ which is followed from Lemma \ref{a-priori}.
\end{proof}
\begin{lemma}\label{limits}
	As $\epsilon\to 0$ and $m\to \infty$, up to a subsequence, we have the following convergences
	\begin{equation}\label{conv-n}
	n^{\epsilon,m} \rightarrow n \quad \text{ strongly in } L^{\frac 53-}(Q_T) \quad \text{ and weakly in } L^{\frac 54}(0,T;W^{1,\frac 54}(\Omega)),
	\end{equation}
	\begin{equation}\label{conv-c}
	c^{\epsilon, m} \rightarrow c \quad \text{ strongly in } L^{\infty-}(Q_T)  \quad \text{ and weakly in } L^4(0,T;W^{1,4}(\Omega)),
	\end{equation}
	\begin{equation}\label{conv-u}
	u^{\epsilon, m} \rightarrow u \quad \text{ strongly in } L^{\frac{10}{3}-}(Q_T) \quad \text{ and weakly in } L^2(0,T;H^1(\Omega)).
	\end{equation}
	Here we write $f^{\epsilon,m} \to f$ in $L^{p-}(Q_T)$ if $f^{\epsilon,m} \to f$ in $L^q(Q_T)$ for all $1\leq q < p$.
\end{lemma}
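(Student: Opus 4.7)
The plan is to (i) extract the weak convergences directly from the uniform bounds, (ii) establish time-derivative estimates in suitable negative-order spaces so as to apply an Aubin--Lions compactness argument, obtaining strong convergence in some low space and a.e.\ convergence on $Q_T$, and (iii) upgrade to the stated $L^{p-}$ strong convergences by interpolating with the stronger integrability bounds, or, for $c$, by dominated convergence against the uniform $L^{\infty}$-bound.

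For the \emph{weak convergences}, all three are consequences of Lemma \ref{a-priori} and Lemma \ref{interpolation}. For $u^{\epsilon,m}$, the estimate $\|\nabla u^{\epsilon,m}\|_{L^2(Q_T)}\leq C_T$ combined with the Poincar\'e inequality gives a bound in $L^2(0,T;H^1(\Omega))$. For $c^{\epsilon,m}$, writing $|\nabla c|^4=\tfrac{|\nabla c|^4}{c^3}\cdot c^3$ and using $\|c\|_{L^\infty(Q_T)}\leq C$ together with the estimate $\int\!\!\int \tfrac{|\nabla c|^4}{c^3}\leq C_T$ yields a uniform $L^4(Q_T)$-bound on $\nabla c^{\epsilon,m}$; combined with the $L^\infty$-bound this gives boundedness in $L^4(0,T;W^{1,4}(\Omega))$. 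For $n^{\epsilon,m}$, writing $\nabla n=2\sqrt n\,\nabla\sqrt n$ and applying H\"older with exponents $8/3$ and $8/5$ yields
\begin{equation*}
\int_0^T\!\!\int_{\Omega}|\nabla n|^{5/4}\,dxdt\leq C\left(\int_0^T\!\!\int_{\Omega}(\sqrt n)^{10/3}\right)^{3/8}\!\!\left(\int_0^T\!\!\int_{\Omega}|\nabla\sqrt n|^2\right)^{5/8},
\end{equation*}
and both factors are uniformly bounded by Lemma \ref{a-priori} and Lemma \ref{interpolation}. Hence $\{n^{\epsilon,m}\}$ is uniformly bounded in $L^{5/4}(0,T;W^{1,5/4}(\Omega))$, and by reflexivity we extract weak subsequential limits $n$, $c$, $u$.

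For the \emph{strong convergences}, I would pass to the equations in \eqref{Smod} and derive uniform bounds on $\partial_t n^{\epsilon,m}$, $\partial_t c^{\epsilon,m}$ and $\partial_t u^{\epsilon,m}$ in $L^1(0,T; (W^{k,q}(\Omega))^*)$ for suitable $k$, $q$. Using $\nabla\cdot u=0$, the transport term $u\cdot\nabla n$ is rewritten as $\nabla\cdot(nu)$, and $nu\in L^{10/9}(Q_T)$ by H\"older since $n\in L^{5/3}$ and $u\in L^{10/3}$. Likewise $n\nabla c\in L^{20/17}(Q_T)$, $cu\in L^{10/3}(Q_T)$, and $nc\in L^{5/3}(Q_T)$; the nonlinear Navier--Stokes terms $u\otimes u\in L^{5/3}(Q_T)$ and $n\nabla\varphi\in L^{p}(Q_T)$ for some $p>1$ using $\nabla\varphi\in L^\rho$ with $\rho>6$ and $n\in L^{5/3}$; the $\epsilon$-forcing is handled by the estimate $\epsilon\int_0^T\!\!\int n(n^2-1)\log n\leq C_T$ from Lemma \ref{a-priori}. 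Plugging these into the weak formulations yields uniform bounds on $\partial_t n^{\epsilon,m}$, $\partial_t c^{\epsilon,m}$ and $\partial_t \mathcal P^m u^{\epsilon,m}$ in $L^{s}(0,T;(W^{k,q}_0(\Omega))^*)$ for appropriate $s,k,q$. The Aubin--Lions lemma applied with embeddings $W^{1,5/4}\hookrightarrow\hookrightarrow L^{5/4}$, $W^{1,4}\hookrightarrow\hookrightarrow L^{4}$, $H^1\hookrightarrow\hookrightarrow L^2$ then yields relative compactness of $\{n^{\epsilon,m}\}$, $\{c^{\epsilon,m}\}$, $\{u^{\epsilon,m}\}$ in $L^{5/4}(Q_T)$, $L^4(Q_T)$, $L^2(Q_T)$ respectively, and up to a subsequence we obtain a.e.\ convergence on $Q_T$. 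To pass from these to the claimed $L^{p-}$ strong convergences, I interpolate between the a.e./$L^1$-strong convergence and the higher integrability bounds: for $n^{\epsilon,m}$, boundedness in $L^{5/3}(Q_T)$ combined with a.e.\ convergence gives, by Vitali (equi-integrability), strong convergence in $L^q(Q_T)$ for every $q<5/3$; likewise $u^{\epsilon,m}$ in $L^{q}(Q_T)$ for $q<10/3$ from the $L^{10/3}$-bound; and for $c^{\epsilon,m}$, the uniform $L^\infty(Q_T)$-bound together with a.e.\ convergence gives, by dominated convergence, strong convergence in $L^q(Q_T)$ for every $q<\infty$.

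The main obstacle I anticipate is the bookkeeping for the time-derivative bound on $u^{\epsilon,m}$, where the Galerkin projection $\mathcal P^m$ requires test functions in the finite-dimensional eigenspace of $A$ and one must show that the corresponding dual-space estimate is uniform in $m$; and the handling of the gravitational forcing $n\nabla\varphi$ under the weaker assumption $\nabla\varphi\in L^\rho$ (as opposed to $L^\infty$), which is where one relies on the argument already used in the proof of Lemma \ref{energy-NS} to keep $n\nabla\varphi$ in a reasonable space. All the other estimates are rather routine consequences of the a priori bounds collected in Lemma \ref{a-priori} and Lemma \ref{interpolation}.
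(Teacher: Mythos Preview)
Your proposal is correct and follows essentially the same route as the paper: extract the weak limits from the uniform bounds of Lemmas \ref{a-priori} and \ref{interpolation}, test the equations to bound the time derivatives in negative-order spaces, apply Aubin--Lions for strong compactness in a low space, and then upgrade via interpolation (or Vitali/dominated convergence) to the stated $L^{p-}$ strong convergences. The only cosmetic differences are in the bookkeeping: the paper estimates $n\nabla c$ through the dissipation term $\int\!\!\int n|\nabla\sqrt c|^2$ rather than via $L^{20/17}$, and it bounds $\epsilon(n-n^3)$ in $L^1(Q_T)$ directly from the mass inequality \eqref{nl} rather than from the entropy dissipation, but both routes give the same conclusion.
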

\begin{proof}We will prove the convergences \eqref{conv-n}, \eqref{conv-c} and \eqref{conv-u} separately. 

\medskip
	\noindent{\it Convergence of $n^{\epsilon,m}$.}
	From Lemma \ref{a-priori}, we have
	\begin{equation}\label{est_n}
	\|n^{\epsilon,m}\|_{L^{\frac 53}(Q_T)} + \int_0^T\int_{\Omega}\frac{|\nabla \sqrt{n^{\epsilon,m}}|^2}{n^{\epsilon,m}}dxdt \leq C_T.
	\end{equation}
	By H\"older's inequality we can estimate
	\begin{align}\label{grad_n}
	\int_0^T\int_{\Omega}|\na n^{\epsilon,m}|^{\frac 54}dxdt &= \int_0^T\int_{\Omega}\left(\frac{|\na n^{\epsilon,m}|^2}{n^{\epsilon,m}}\right)^{\frac 58}(n^{\epsilon,m})^{\frac 58}dxdt\\
	&\leq \left(\int_0^T\int_{\Omega}\frac{|\na n^{\epsilon,m}|^2}{n^{\epsilon,m}}dxdt\right)^{\frac 58}\left(\int_0^T\int_{\Omega}(n^{\epsilon,m})^{\frac 53}dxdt\right)^{\frac 38} \leq C_T.\nonumber
	\end{align}
	By testing the equation of $n^{\epsilon,m}$ with a smooth test function $\psi\in C^\infty_0(\Omega\times[0,T))$ we have
	\begin{equation}\label{weak-n}
	\begin{aligned}
	&-\int_0^T\int_{\Omega}n^{\epsilon,m} \psi_t dxdt - \int_{\Omega}n^{\epsilon,m}_0\psi(\cdot,0)dx\\
	&= -\int_0^T\int_{\Omega}\na n^{\epsilon,m}\na \psi dxdt - \int_0^T\int_{\Omega}n^{\epsilon,m}\na c^{\epsilon,m} \na\psi dxdt\\
	&+ \int_0^T\int_{\Omega}n^{\epsilon,m}u^{\epsilon,m}\cdot\na\psi dxdt+ \epsilon\int_0^T\int_{\Omega}(n^{\epsilon,m} - (n^{\epsilon,m})^3)\psi dxdt.
	\end{aligned}
	\end{equation}
	From \eqref{grad_n} we can estimate
	\begin{equation*}
	\left|\int_0^T\int_{\Omega}\nabla n^{\epsilon,m}\na \psi dxdt\right| \leq C_T\|\psi\|_{L^{5}(Q_T)}.
	\end{equation*}
	Lemma \ref{a-priori} gives
	\begin{equation*}
	\begin{aligned}
	&\left|\int_0^T\int_{\Omega}n^{\epsilon,m}\na c^{\epsilon,m} \na\psi dxdt\right|\\
	&\leq \int_0^T\int_{\Omega}|\na c^{\epsilon,m}|\sqrt{n^{\epsilon,m}} \sqrt{n^{\epsilon,m}} |\na\psi|dxdt\\
	&\leq \left(\int_0^T\int_{\Omega}|\na c^{\epsilon,m}|^2n^{\epsilon,m} dxdt\right)^{\frac 12}\left(\int_0^T\int_{\Omega}|n^{\epsilon,m}|^{2}dxdt\right)^{\frac 12}\|\na\psi\|_{L^\infty(Q_T)}\\
	&\leq C_T\|\psi\|_{L^\infty(0,T;W^{2,4}(\Omega))},
	\end{aligned}
	\end{equation*}
	thanks to the estimates in three dimensions $\|\nabla \psi\|_{L^\infty(\Omega)}\leq C\|\nabla \psi\|_{W^{1,4}(\Omega)} \leq C\|\psi\|_{W^{2,4}(\Omega)}$. Using the same idea we estimate
	\begin{align*}
	\left|\int_0^T\int_{\Omega}n^{\epsilon,m}u^{\epsilon,m}\cdot \na\psi dxdt \right| &\leq \|\psi\|_{L^\infty(Q_T)}\int_0^T\int_{\Omega}|n^{\epsilon,m}||u^{\epsilon,m}|dxdt\\
	&\leq C\|\psi\|_{L^\infty(0,T;W^{2,4}(\Omega))}\|u^{\epsilon,m}\|_{L^{\frac{10}{3}}(Q_T)}\|n^{\epsilon,m}\|_{L^{\frac{10}{7}}(Q_T)}\\
	&\leq C\|\psi\|_{L^\infty(0,T;W^{2,4}(\Omega))},
	\end{align*}
	thanks to \eqref{est_n} and the fact that $\frac{10}{7} < \frac{5}{3}$. From \eqref{nl} it follows that
	\begin{equation*}
	\left|\epsilon\int_0^T\int_{\Omega}(n^{\epsilon,m} - (n^{\epsilon,m})^3) \psi dxdt\right| \leq C_T\|\psi\|_{L^\infty(Q_T)}.
	\end{equation*}
	Combining these estimates we obtain
	\begin{equation}\label{est_nt}
	\{\partial_tn^{\epsilon,m}\} \quad \text{ is bounded } \quad \text{ in } \quad L^1(0,T;(W^{2,4}(\Omega))^*).
	\end{equation}
	
	From \eqref{est_n}, \eqref{grad_n} and \eqref{est_nt} it follows from Aubin-Lions lemma that 
	\begin{equation*}
	n^{\epsilon,m} \to n \quad \text{ in } \quad L^{\frac 54}(Q_T)
	\end{equation*}
	as $\epsilon\to 0$ and $m\to\infty$ (up to a subsequence). Moreover, since $\{n^{\epsilon,m}\}$ is bounded in $L^{\frac 53}(Q_T)$, thanks to Lemma \ref{interpolation}, we have in fact
	\begin{equation*}
	n^{\epsilon,m} \to n \quad \text{ in } \quad L^{\frac 53-}(Q_T).
	\end{equation*}
	This convergence and \eqref{grad_n} give the convergence for $n$ as in \eqref{conv-n}.
	
	\medskip
	\noindent{\it Convergence of $c^{\epsilon,m}$.}
	From $\|c^{\epsilon,m}\|_{L^\infty(Q_T)} \leq C$ and $\int_0^T\int_{\Omega}\frac{|\na c^{\epsilon,m}|^4}{(c^{\epsilon,m})^3}dxdt \leq C_T$ we have
	\begin{equation}\label{grad-c}
	\int_0^T\int_{\Omega}|\na c^{\epsilon,m}|^4dxdt \leq C\int_0^T\int_{\Omega}\frac{|\na c^{\epsilon,m}|^4}{(c^{\epsilon,m})^3}dxdt \leq C_T.
	\end{equation}
	By testing the second equation in \eqref{Smod} with a smooth test function $\psi \in C^\infty_0(\Omega\times[0,T))$ we have
	\begin{align}\label{weak-c}
	&-\int_0^T\int_{\Omega}c^{\epsilon,m} \psi_t dxdt - \int_{\Omega}c^{\epsilon,m}_0\psi(\cdot,0)dx\nonumber\\
	&= -\int_0^T\int_{\Omega}\na c^{\epsilon,m} \na \psi dxdt + \int_0^T\int_{\Gamma}g(x)(\gamma - c^{\epsilon,m})\psi \dH dt\nonumber\\
	&+\int_0^T\int_{\Omega}c^{\epsilon,m}u^{\epsilon,m}\na\psi dxdt -\int_0^T\int_{\Omega}n^{\epsilon,m}c^{\epsilon,m}\psi dxdt.
	\end{align}
	We have the following estimates
	\begin{equation*}
	\left|\int_0^T\int_{\Omega}\na c^{\epsilon,m}\na \psi dxdt\right| \leq \|\na c^{\epsilon,m}\|_{L^4(Q_T)}\|\na\psi\|_{L^{\frac{4}{3}}(Q_T)} \leq C\|\psi\|_{L^{\frac 43}(0,T;W^{1,\frac 43}(\Omega))},
	\end{equation*}
	\begin{align*}
	\left|\int_0^T\int_{\Gamma}\kappa(x)(\gamma - c^{\epsilon,m})\psi \dH dt\right| &\leq C(1+\|c^{\epsilon,m}\|_{L^\infty((0,T)\times \Gamma)})\int_0^T\int_{\Gamma}|\psi|\dH dt\\
	&\leq C_T\|\psi\|_{L^1(0,T;W^{1,1}(\Omega))},
	\end{align*}
	\begin{align*}
	\left|\int_0^T\int_{\Omega}c^{\epsilon,m}u^{\epsilon,m}\na\psi dxdt\right| &\leq \|c^{\epsilon,m}\|_{L^\infty(Q_T)}\|u^{\epsilon,m}\|_{L^2(Q_T)}\|\na\psi\|_{L^2(Q_T)}\\
	&\leq C_T\|\psi\|_{L^2(0,T;H^1(\Omega))}.
	\end{align*}
	and
	\begin{align*}
	\left|\int_0^T\int_{\Omega}n^{\epsilon,m}c^{\epsilon,m}\psi dxdt\right| &\leq \|c^{\epsilon,m}\|_{L^\infty(Q_T)}\|n^{\epsilon,m}\|_{L^{\frac 53}(Q_T)}\|\psi\|_{L^{\frac 52}(Q_T)}\\
	&\leq C\|\psi\|_{L^{\frac 52}(0,T;L^{\frac 52}(\Omega))}.
	\end{align*}
	Therefore
	\begin{equation*}
	\left|\int_0^T\int_{\Omega}\partial_t c^{\epsilon,m}\psi dxdt\right| \leq C_T\|\psi\|_{L^{\frac 52}(0,T;W^{1,\frac 52}(\Omega))}.
	\end{equation*}
	thus
	\begin{equation*}
	\{\partial_t c^{\epsilon,m} \} \quad \text{ is bounded in } \quad L^{\frac 53}(0,T;(W^{1,\frac 52}(\Omega))^*).
	\end{equation*}
	Combining this with \eqref{grad-c} and the uniform bound of $c^{\epsilon,m}$, it follows from the Aubin-Lions lemma that
	\begin{equation*}
	c^{\epsilon,m} \to c \quad \text{ in } \quad L^4(Q_T)
	\end{equation*}
	as $\epsilon\to 0$ and $m\rightarrow \infty$, and consequently \eqref{conv-c} thanks to the boundedness of $c^{\epsilon,m}$ in $L^\infty(Q_T)$.
	
	\medskip
	\noindent{\it Convergence of $u^{\epsilon,m}$.} Testing the equation of $u^{\epsilon,m}$ in \eqref{Smod} with $\psi \in C^\infty_0(\Omega\times [0,T))^3$ we get
	\begin{align}\label{weak-u}
	&-\int_0^T\int_{\Omega}u^{\epsilon,m}\cdot \psi_t dxdt - \int_{\Omega}u^{\epsilon,m}_0\cdot\psi(\cdot,0)dx\nonumber\\
	&= \int_0^T\int_{\Omega}\na u^{\epsilon,m} \cdot \na\psi dxdt - \int_0^T\int_{\Omega}\mathcal P^mB(u^{\epsilon,m},u^{\epsilon,m})\psi dxdt- \int_0^T\int_{\Omega}\mathcal{P}^m(n^{\epsilon,m}\na\varphi)\psi dxdt. 
	\end{align}
	We estimate the terms on the right hand side as following
	\begin{align*}
	\left|\int_0^T\int_{\Omega}\na u^{\epsilon,m} \cdot \na\psi dxdt \right| \leq \|\na u^{\epsilon,m}\|_{L^2(Q_T)}\|\na\psi\|_{L^2(Q_T)} \leq C_T\|\psi\|_{L^2(0,T;H^1(\Omega))},
	\end{align*}
	\begin{align*}
	\left|\int_0^T\int_{\Omega}\mathcal P^mB(u^{\epsilon,m},u^{\epsilon,m})\psi dxdt\right| \leq \|u^{\epsilon,m}\|_{L^{\frac{10}{3}}(Q_T)}^2\|\na \psi\|_{L^{\frac 52}(Q_T)} \leq C_T\|\psi\|_{L^{\frac 52}(0,T;W^{1,\frac 52}(\Omega))},
	\end{align*}
	and
	\begin{align*}
	\left|\int_0^T\int_{\Omega}\mathcal{P}^m(n^{\epsilon,m}\na\varphi)\psi dxdt\right| \leq \|\varphi\|_{W^{1,\rho}(\Omega)}T\|n^{\epsilon,m}\|_{L^{\frac 53}(Q_T)}\|\psi\|_{L^{\frac{5\rho}{2\rho - 5}}(Q_T)} \leq C_T\|\psi\|_{L^5(Q_T)},
	\end{align*}
	where we used $\frac{5\rho}{2\rho - 5} < 5$ at the end since $\rho > 6$. Therefore
	\begin{equation*}
	\{\partial_t u^{\epsilon,m} \} \quad \text{ is bounded in } \quad L^{\frac 54}(0,T;(W^{1,5}(\Omega))^*).
	\end{equation*}
	Now the Aubin-Lions gives us the strong convergence
	\begin{equation*}
	u^{\epsilon,m} \to u \quad \text{ in } \quad L^2(Q_T)
	\end{equation*}
	as $\epsilon\to 0$ and $m\to \infty$ (up to a subsequence). Finally, \eqref{conv-u} follows from the fact that $\{u^{\epsilon,m}\}$ is bounded in $L^{\frac{10}{3}}(Q_T)$ from Lemma \ref{interpolation}.
\end{proof}

We need one more preparation which was proved in \cite[Lemma 4.11]{Bra17}.
\begin{lemma}\cite[Lemma 4.11]{Bra17}\label{conv_logistic}
	The sequence $\{\epsilon(n^{\epsilon,m} - (n^{\epsilon,m})^3)\}$ is weakly precompact in $L^1(Q_T)$.
\end{lemma}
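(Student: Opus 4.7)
The plan is to invoke the Dunford-Pettis theorem, which reduces weak precompactness in $L^1(Q_T)$ to the combination of a uniform $L^1(Q_T)$ bound and equi-integrability of the family $\{f^{\varepsilon,m}\} := \{\varepsilon(n^{\varepsilon,m} - (n^{\varepsilon,m})^3)\}$. For the former, $|n - n^3| \leq n + n^3$, and the $L^1$-norm of $\varepsilon n^{\varepsilon,m}$ is $\leq \varepsilon T \cdot C$ by Lemma \ref{L1Linf}, while integrating \eqref{nl} in time yields
\[
\varepsilon \int_0^T\!\!\int_\Omega (n^{\varepsilon,m})^3 \, dx\, dt \;\leq\; \varepsilon T \, \|n^{\varepsilon,m}\|_{L^\infty(0,T;L^1(\Omega))} + \|n_0^{\varepsilon,m}\|_{L^1(\Omega)} \;\leq\; C_T,
\]
so $\|f^{\varepsilon,m}\|_{L^1(Q_T)} \leq C_T$ uniformly in $\varepsilon \in (0,1]$ and $m \in \mathbb{N}$.

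The heart of the proof is the equi-integrability, which rests on the entropy-type estimate in Lemma \ref{a-priori}:
\[
\varepsilon \int_0^T\!\!\int_\Omega n^{\varepsilon,m}\bigl((n^{\varepsilon,m})^2 - 1\bigr)\log n^{\varepsilon,m} \, dx\, dt \;\leq\; C_T.
\]
The pointwise integrand $n(n^2-1)\log n$ is non-negative for all $n > 0$, and for $n \geq 2$ it satisfies $n(n^2 - 1)\log n \geq \tfrac{1}{2} n^3 \log n \geq \tfrac{1}{2}(\log M) n^3$ on the superlevel set $\{n \geq M\}$ whenever $M \geq 2$. This is the key superlinear input that $L^1$-boundedness alone would not provide.

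Given $\eta > 0$ and a measurable set $A \subset Q_T$, we split $A$ according to the threshold $M \geq 2$ to be chosen. On $A \cap \{n^{\varepsilon,m} < M\}$ the integrand admits the crude $L^\infty$-bound
\[
\int_{A \cap \{n^{\varepsilon,m} < M\}} |f^{\varepsilon,m}| \, dx\, dt \;\leq\; \varepsilon(M + M^3)\,|A| \;\leq\; (M + M^3)\,|A|,
\]
while on the complementary part $\{n^{\varepsilon,m} \geq M\}$ one has $|n - n^3| \leq n^3$, so
\[
\int_{A \cap \{n^{\varepsilon,m} \geq M\}} |f^{\varepsilon,m}| \, dx\, dt \;\leq\; \frac{2}{\log M}\,\varepsilon\!\int_{\{n^{\varepsilon,m}\geq M\}} n^{\varepsilon,m}\bigl((n^{\varepsilon,m})^2-1\bigr)\log n^{\varepsilon,m}\, dx\, dt \;\leq\; \frac{2 C_T}{\log M}.
\]
First choose $M$ large so that $2C_T/\log M < \eta/2$, then $\delta := \eta/\bigl(2(M+M^3)\bigr)$; this gives $\int_A |f^{\varepsilon,m}|\, dx\, dt < \eta$ whenever $|A| < \delta$, uniformly in $\varepsilon$ and $m$. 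Combined with the $L^1$-bound, Dunford-Pettis yields the claim.

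I do not expect a genuine obstacle: the only nontrivial point is the recognition that the dissipation-like quantity $\varepsilon n(n^2-1)\log n$ supplies precisely the superlinear control on $\varepsilon n^3$ needed at the tail $\{n \geq M\}$, and the bookkeeping between the two regimes is elementary.
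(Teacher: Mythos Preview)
Your argument is correct. The paper itself does not prove this lemma; it simply cites \cite[Lemma~4.11]{Bra17}, so there is no in-paper proof to compare against. Your route---Dunford--Pettis combined with a de la Vall\'ee Poussin-type tail estimate drawn from the bound $\varepsilon\int_0^T\int_\Omega n(n^2-1)\log n\,dx\,dt\le C_T$ of Lemma~\ref{a-priori}---is exactly the natural way to establish weak $L^1$-precompactness here and is in all likelihood the argument given in \cite{Bra17}. One cosmetic remark: you tacitly use $\varepsilon\le 1$ when bounding $\varepsilon(M+M^3)\le M+M^3$; this is harmless since only the limit $\varepsilon\to 0$ is at stake, but it is worth stating.
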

We are now ready to prove Theorem \ref{thm:main3D}.
\begin{proof}[Proof of Theorem \ref{thm:main3D}]
	It is sufficient to show that the limits function $(n,c,u)$ obtained in Lemma \ref{limits} is a weak solution in the sense of Definition \ref{weak_sol}. In order to do that, we need to take care of the limits $\epsilon\to 0$ and $m\to \infty$ in \eqref{weak-n}, \eqref{weak-c} and \eqref{weak-u}.
	
	\medskip
	For the first term on the right hand side of \eqref{weak-n}, we write $\na n^{\epsilon,m} = 2\na\sqrt{n^{\epsilon,m}}\sqrt{n^{\epsilon,m}}$ and use $\na\sqrt{n^{\epsilon,m}} \rightharpoonup \na\sqrt{n}$ in $L^2(Q_T)$ and $\sqrt{n^{\epsilon,m}}\to \sqrt{n}$ in $L^2(Q_T)$ we get that $\na n^{\epsilon,m} \rightharpoonup \na n$ weakly in $L^2(Q_T)$, hence
	\begin{equation*}
	\int_0^T\int_{\Omega}\na n^{\epsilon,m}\cdot \na \psi dxdt \xrightarrow{\epsilon\to 0, \; m\to\infty} \int_0^T\int_{\Omega}\na n\cdot \na \psi dxdt.
	\end{equation*}
	From $\na c^{\epsilon,m} \rightharpoonup \na c$ weakly in $L^4(Q_T)$ and $n^{\epsilon,m} \to n$ strongly in $L^{\frac 43}(Q_T)$ it follows
	\begin{equation*}
	\int_0^T\int_{\Omega}n^{\epsilon,m}\na c^{\epsilon,m}\cdot \na\psi dxdt \xrightarrow{\epsilon\to 0, \; m\to\infty} \int_0^T\int_{\Omega}n\na c\cdot \na\psi dxdt.
	\end{equation*}
	From \eqref{conv-n} and \eqref{conv-u} we have $n^{\epsilon,m} \to n$ strongly in $L^{\frac{20}{13}}(Q_T)$ and $u^{\epsilon,m}\to u$ strongly in $L^{\frac{20}{7}}(Q_T)$, and thus $n^{\epsilon,m}u^{\epsilon,m} \to nu$ strongly in $L^1(Q_T)$ and consequently
	\begin{equation*}
	\int_0^T\int_{\Omega}n^{\epsilon,m}u^{\epsilon,m}\cdot\na\psi dxdt \xrightarrow{\epsilon\to 0, \; m\to\infty} \int_0^T\int_{\Omega}nu\cdot\na\psi dxdt.
	\end{equation*}
	The convergence of the last term
	\begin{equation*}
	\int_0^T\int_{\Omega}\epsilon(n^{\epsilon,m} - (n^{\epsilon,m})^3)\psi dxdt \xrightarrow{\epsilon\to 0,\; m\to\infty} 0
	\end{equation*}
	follows from Lemma \ref{conv_logistic}.
	
	\medskip
	Similarly, the convergence of all terms in \eqref{weak-c} holds thanks to \eqref{conv-n}-\eqref{conv-c}-\eqref{conv-u} and the interpolation inequality
	\begin{equation*}
	\int_0^T\int_{\Gamma}|c^{\epsilon,m} - c|^2\dH dt \leq C\|c^{\epsilon,m} - c\|_{L^2(0,T;H^1(\Omega))}\|c^{\epsilon,m} - c\|_{L^2(0,T;L^2(\Omega)}.
	\end{equation*}
	
	\medskip
	All the terms in \eqref{weak-u} can be treated similarly thanks to \eqref{conv-n}-\eqref{conv-u}.
	
	The uniform bound of the energy will be proved in Proposition \ref{bound_energy}.
\end{proof}

	\begin{proposition}[Uniform-bound of the energy]\label{bound_energy}
		We have the following bound of the energy
		\begin{equation*}
		\sup_{t\in [0,\infty)}\left(\int_{\Omega}n(t)\log n(t)dx + \| \na\sqrt c(t)\|_{L^2(\Omega)}^2 + \|u(t)\|_{L^2(\Omega)}^2\right) \leq C.
		\end{equation*}
	\end{proposition}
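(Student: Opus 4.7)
The plan is to implement the idea sketched in the introduction: introduce the auxiliary relative entropy
$$S^{\text{add}}(t) := \int_\Omega \left[c\log\frac{c}{\widehat\gamma} - c + \widehat\gamma\right]dx \geq 0,$$
where $\widehat\gamma\in C^\infty(\overline\Omega)$ is a positive smooth extension of $\gamma$ with $\widehat\gamma\geq\underline\gamma>0$ (possible by \eqref{cond_data}), and define the augmented functional $\mathcal F^{\text{new}}(t) := \mathcal F(t) + L\,S^{\text{add}}(t)$ for a sufficiently large $L>0$. The target is a Gronwall-type inequality
$$\frac{d}{dt}\mathcal F^{\text{new}}(t) + \lambda\,\mathcal F^{\text{new}}(t) \leq C,$$
which, upon integration, gives $\sup_{t\geq 0}\mathcal F^{\text{new}}(t)<\infty$ and therefore the claimed uniform bound on $\mathcal F$ (since $S^{\text{add}}\geq 0$).

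First I would compute $\frac{d}{dt}S^{\text{add}}$ using $\partial_t c = \Delta c - u\cdot\nabla c - nc$, the boundary condition $\partial_\nu c = \kappa(\gamma-c)$, and $\nabla\cdot u=0$, $u|_\Gamma=0$. Integration by parts yields
\begin{align*}
\frac{d}{dt}S^{\text{add}}
&= \int_\Gamma \kappa(\gamma-c)\log\tfrac{c}{\gamma}\dH - \int_\Omega\tfrac{|\nabla c|^2}{c}dx + \int_\Omega \nabla c\cdot\tfrac{\nabla\widehat\gamma}{\widehat\gamma}dx \\
&\quad - \int_\Omega u\cdot c\,\tfrac{\nabla\widehat\gamma}{\widehat\gamma}dx - \int_\Omega nc\log\tfrac{c}{\widehat\gamma}dx.
\end{align*}
The boundary term is non-positive, and the elementary convexity inequality $(x-1)\log x \geq s(x)$ applied at $x=\gamma/c$ gives $\int_\Gamma\kappa(\gamma-c)\log(c/\gamma)\dH \leq -\int_\Gamma\kappa\, s^\infty(\gamma|c)\dH$; Young's inequality absorbs half of $\int|\nabla c|^2/c\,dx$ into the drift term (the leftover is bounded since $c\in L^\infty$ and $|\nabla\widehat\gamma|^2/\widehat\gamma^2$ is bounded); the convection term is bounded by $\delta\|u\|_{H^1}^2 + C$; and the reaction term is bounded by $C\|n\|_{L^1}\leq C$ because $y\mapsto y|\log(y/\widehat\gamma)|$ is bounded on $[0,\|c\|_{L^\infty}]$. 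Using $\int|\nabla c|^2/c = 4\int|\nabla\sqrt c|^2$, this produces
$$\frac{d}{dt}S^{\text{add}} + \int_\Gamma\kappa\,s^\infty(\gamma|c)\dH + 2\int_\Omega|\nabla\sqrt c|^2 dx \leq C + \delta\|u\|_{H^1}^2.$$

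Multiplying by $L$ and adding to Lemma \ref{lem:energy2} together with $K$ times Lemma \ref{energy-NS}, I would choose $L$ large enough that $2L\int|\nabla\sqrt c|^2$ absorbs the troublesome $C\int|\nabla\sqrt c|^2$ on the right of Lemma \ref{lem:energy2} with a strict surplus, $K$ large as in \eqref{def_F} to absorb $\|c\|_{L^\infty}\|u\|_{H^1}^2$, and $\delta>0$ small to absorb the $\|\nabla\sqrt n\|_{L^2}^2$ and $\|u\|_{H^1}^2$ contributions. The remaining dissipation on the left then dominates $\lambda\mathcal F^{\text{new}}$ via: the logarithmic Sobolev inequality $\int n\log n\,dx \leq C\int|\nabla\sqrt n|^2\,dx + C$ (applicable because Lemma \ref{L1Linf} gives $\|n\|_{L^1}\leq C$); the surplus $\int|\nabla\sqrt c|^2$ dissipation; the direct boundary term $\int_\Gamma\kappa\,s^\infty(\gamma|c)\dH$; Poincaré $\|u\|_{L^2}^2\leq C\|u\|_{H^1}^2$; and the trivial pointwise bound $S^{\text{add}}\leq C$ (since $c\in L^\infty$ and $\widehat\gamma\geq\underline\gamma$), which lets us absorb $\lambda L\,S^{\text{add}}$ into the right-hand constant. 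Gronwall then yields the uniform estimate for each approximate solution $(n^{\epsilon,m},c^{\epsilon,m},u^{\epsilon,m})$, and the bound transfers to the weak solution by lower semicontinuity of the convex functionals $n\mapsto\int n\log n$, $c\mapsto\|\nabla\sqrt c\|_{L^2}^2$ and $u\mapsto\|u\|_{L^2}^2$ along the convergences of Lemma \ref{limits}.

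The main obstacle is the boundary-entropy piece $\int_\Gamma\kappa\,s^\infty(\gamma|c)\dH$ inside $\mathcal F$: it is not a priori controlled by any bulk dissipation (and can blow up where $c$ is small on $\Gamma$). The device that unlocks the argument is the elementary inequality $(x-1)\log x\geq s(x)$, which converts the naturally non-positive boundary contribution appearing in $\frac{d}{dt}S^{\text{add}}$ into a genuine dissipation bound for the boundary entropy itself, thereby closing the estimate.
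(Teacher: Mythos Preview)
Your proposal is correct and follows essentially the same route as the paper: both introduce the bulk relative entropy $\int_\Omega s^\infty(c|\widehat\gamma)\,dx$, exploit that its time derivative produces the dissipation $\int_\Gamma\kappa\,s^\infty(\gamma|c)\,\dH$ together with $\int_\Omega|\nabla\sqrt c|^2\,dx$, add a large multiple of this to the estimate of Lemmas~\ref{lem:energy2}--\ref{energy-NS}, and close via the log-Sobolev inequality and Gronwall. Your treatment is in fact a little cleaner in two places: you carry the $\nabla\widehat\gamma$ terms explicitly (the paper's displayed computation tacitly drops them), and you dispose of $\lambda L\,S^{\text{add}}$ by the trivial $L^\infty$ bound on $c$ rather than a second application of log-Sobolev; you also make explicit the passage to the weak limit by lower semicontinuity.
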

	\begin{proof}
		From \eqref{k1}, we deduce by choosing $\delta$ small enough that
		\begin{equation}\label{k2}
			\mathcal F(n,c,u)(t) + C\int_s^t\mathcal E(n,c,u)(r)dr \leq \mathcal F(n,c,u)(s) + C(t-s) + C\int_s^t\int_{\Omega}|\na\sqrt c|^2dxdr
		\end{equation}
		where
		\begin{equation}\label{def_E}
			\mathcal E(n,c,u) = \int_{\Omega}\left(|\na\sqrt{n}|^2 + c|\na^2\log c|^2  + \frac{|\na c|^4}{c^3} + |\na \sqrt{c}|^2n + \|u\|_{H^1(\Omega)}^2 \right)dx.
		\end{equation}
		Looking at \eqref{k2}, it becomes clear that that last term on the right-hand side is troublesome as it prevents to obtain uniform bound in time of the energy while applying Gronwall's lemma. To overcome this difficulty, we introduce an additional energy, namely $\int_{\Omega}s^\infty(c|\gamma)dx$, where we recall that $s^\infty$ is defined in \eqref{entropy}. Remark that here we consider an extension of the surface function $\gamma: \Gamma \to \mathbb R$ into the (with a slight abuse of notation) $\gamma:\overline{\Omega}\to \infty$, with $\gamma \in H^1(\overline{\Omega})\cap L^\infty(\overline{\Omega})$. Moreover, thanks to \eqref{cond_data}, $\gamma(x) \geq \underline{\gamma} >0$ for all $x\in\overline{\Omega}$. We now show that 
		\begin{align}\label{k3}
		\frac d{dt}\int_{\Omega}s^\infty(c|\gamma)dx+4\int_{\Omega}\left|\nabla\sqrt{c}\right|^2dx+\int_{\Gamma}\kappa s^\infty(\gamma|c) d\mathcal H^{d-1}_x\leq C.
		\end{align}
		Indeed, by direct computations, we have
		\begin{align*}
		\frac d{dt}\int_{\Omega}s^\infty(c|\gamma)dx+4\int_{\Omega}|\na\sqrt{c}|^2dx
		&=
		\int_{\Omega}\partial_tc\log\frac{c}\gamma dx	+\int_{\Omega}\nabla c\cdot \nabla\log cdx
		\\&= \int_{\Gamma}\kappa (\gamma- c) \log\frac{c}{\gamma}d\mathcal H^{d-1}_x-\int_{\Omega}cn \log\frac{c}{\gamma}dx.
		\end{align*}
		We now observe the following two  identities
		\begin{align*}
		(\gamma- c) \log\frac{c}{\gamma}&=
		-c\log\frac{c}{\gamma}+c-\gamma
		+\gamma\log\frac{c}{\gamma}+\gamma-c	\\&=
		-s^\infty(c|\gamma)- s^\infty(\gamma|c),
		\end{align*}
		and
		\begin{align*}
		-cn \log\frac{c}{\gamma}
		&=-n\left(c\log\frac{c}{\gamma}-c+\gamma\right)+n(\gamma-c)
		=n(\gamma-c)-ns^\infty(c|\gamma).
		\end{align*}
		Combing these calculations and using $s^\infty(x|y) \geq 0$ lead to
		\begin{align*}
		\frac d{dt}\int_{\Omega}s^\infty(c|\gamma)dx+4\int_{\Omega}|\na\sqrt c|^2 dx
		+\int_{\Gamma}\kappa s^\infty(\gamma|c) d\mathcal H^{d-1}_x\\
		\leq \int_{\Omega}n(\gamma-c)dx \leq \|\gamma\|_{L^\infty(\Omega)}\|n\|_{L^1(\Omega)} \leq C,
		\end{align*}
		due to the non-negativity of $n$ and $c$, and $\|n\|_{L^1(\Omega)}\leq C$. 

		\medskip		
		By multiplying \eqref{k3} by a large constant $L>0$, integrating the resultant on $(s,t)$, and adding the obtained inequality to \eqref{k2}, we get
		\begin{equation}\label{k4}
			\mathfrak X(n,c,u)(t) + C\int_s^t \mathfrak Z(n,c,u)(r)dr \leq \mathfrak X(n,c,u)(s) + C(t-s)
		\end{equation}
		where
		\begin{align}\label{X}
			\mathfrak X(n,c,u)(t) = \mathcal F(n,u,c)(t) + L\int_{\Omega}s^{\infty}(c|\gamma)(t)dx
			\end{align}
			and
		\begin{align*}
			\mathfrak Z(n,c,u)(r) = \mathcal E(n,c,u)(r) + \int_{\Omega}|\na \sqrt c(r)|^2dx + \int_{\Gamma}\kappa s^\infty(\gamma|c)\dH.
		\end{align*}
		We will now prove for some constants $\lambda>0$ and $C>0$ that
		\begin{equation}\label{eede}
			\mathfrak Z(n,c,u) \geq \lambda \mathfrak X(n,c,u) - C.
		\end{equation}
		From \eqref{def_F} and \eqref{def_E}, to obtain \eqref{eede}, it remains to show that
		\begin{equation}\label{k5}
			\int_{\Omega}|\na\sqrt n|^2dx \geq \lambda_1\int_{\Omega}n\log n dx - C
		\end{equation}
		and 
		\begin{equation}\label{k6}
			\int_{\Omega}|\na\sqrt{c}|^2dx \geq \lambda_2\int_{\Omega}s^{\infty}(c|\gamma) - C.
		\end{equation}
		By the Logarithmic-Sobolev inequality we have, where $\overline{n} = \frac{1}{|\Omega|}\int_{\Omega}n(x)dx$,
		\begin{align*}
			\int_{\Omega}|\na \sqrt n|^2dx &= \frac 14\int_{\Omega}\frac{|\na\sqrt n|^2}{n}dx \geq \frac 14 C_{LSI}\int_{\Omega}n\log\frac{n}{\overline{n}}dx\\
			&= \frac 14C_{LSI}\int_{\Omega}n\log n dx - \frac 14 C_{LSI}\log \overline{n}\|n\|_{L^1(\Omega)},
		\end{align*}
		hence \eqref{k5}, thanks to $\|n\|_{L^1(\Omega)} \leq C$. Similarly
		\begin{align*}
			\int_{\Omega}|\na \sqrt c|^2dx &\geq \frac 14C_{LSI}\int_{\Omega}c\log c\,dx - \frac 14C_{LSI}\log \overline{c}\|c\|_{L^1(\Omega)}\\
			&= \frac 14C_{LSI}\int_{\Omega}s^\infty(c|\gamma)dx + \frac 14 C_{LSI}\int_{\Omega}(c\log \gamma + c - \gamma)dx - \frac 14C_{LSI}\frac{1}{|\Omega|}\|c\|_{L^1(\Omega)}^2
		\end{align*}
		hence \eqref{k6}, due to $\|c\|_{L^\infty(\Omega)} \leq C$, $\|\gamma\|_{L^\infty(\Omega)} \leq C$ and $\gamma(x) \geq \underline{\gamma} > 0$. We have proved \eqref{k5} and \eqref{k6}, and consequently \eqref{eede}. Using \eqref{eede} in \eqref{k4}, it follows that
		\begin{equation*}
		\mathfrak{X}(n,c,u)(t) + C\int_s^t \mathfrak{X}(n,c,u)(r)dr \leq \mathfrak{X}(n,c,u)(s) + C(t-s).
		\end{equation*}
		Thus for some suitable constant $K$ and $\Theta(r) = \mathfrak{X}(n,c,u)(r) - K$,
		\begin{equation*}
		\Theta(t) + C\int_s^t\Theta(r)dr 
		\leq \Theta(s).
		\end{equation*}
		Defining
		\begin{equation*}
		\Xi(s) = \int_s^t \Theta(r)dr,
		\end{equation*}
		we have
		\begin{equation*}
		\Xi'(s) = -\Theta(s) \leq \Theta(t) - C\Xi(s)
		\end{equation*}
		and consequently
		\begin{equation*}
		\left(e^{Cs}\Xi(s)\right)' + e^{Cs}\Theta(t)\leq 0.
		\end{equation*}
		Integrating this from $0$ to $t$, noting that $\Xi(t) = 0$, gives
		\begin{equation*}
		-\Xi(0) + \Theta(t)\frac{e^{Ct}-1}{C}\leq 0.
		\end{equation*}
		From this we have
		\begin{equation*}
		\Theta(t)\frac{e^{Ct}-1}{C} \leq \Xi(0) = \int_0^t\Theta(r)dr \leq \frac{\Theta(0) -\Theta(t)}{C}
		\end{equation*}
		and therefore
		\begin{equation*}
		\Theta(t) \leq e^{-Ct}\Theta(0).
		\end{equation*}
		Replacing $\Theta(t) = \mathfrak{X}(n,c,u)(t)-K$ we finally obtain
		\begin{equation*}
		\mathfrak{X}(n,c,u)(t) \leq K + e^{-Ct}(\mathfrak X(n_0,c_0,u_0) - K) \leq C
		\end{equation*}
		for all $t>0$, which proves our claim thanks to \eqref{X} and \eqref{def_F}.
	\end{proof}

	\medskip
	\noindent{\bf Acknowledgement.} The authors gratefully acknowledge the support of the Hausdorff Research Institute for Mathematics (Bonn) through the Junior Trimester Kinetic Theory. This work is partially supported by NAWI Graz,  the International Research Training Group IGDK 1754 and by the Austrian Science Fund (FWF) project F 65.

\end{document}